\newcommand{\R}{\mathbb{R}} 
\newcommand{\K}{\mathbb{K}} 
\newcommand{\N}{\mathbb{N}}
\newcommand{\Z}{\mathbb{Z}}
\newcommand{\A}{\mathcal{A}}
\newcommand{\0}{\mathbf{0}}
\newcommand{\psd}{\mathbb{S}}
\DeclarePairedDelimiter{\abs}{\lvert}{\rvert}
\DeclarePairedDelimiter{\norm}{\lVert}{\rVert}
\DeclarePairedDelimiterX{\inp}[2]{\langle}{\rangle}{#1, #2}
\DeclarePairedDelimiter{\Mp}{\mathcal{M}_+(}{)}
\newtheorem{thm}{Theorem}[section]
\newtheorem{lem}[thm]{Lemma}
\newtheorem{prop}[thm]{Proposition}
\newtheorem{cor}{Corollary}
\newtheorem{defn}{Definition}[section]
\newtheorem{exmp}{Example}[section]
\newtheorem{rem}{Remark} 
\newtheorem{prob}{Problem} 
\DeclarePairedDelimiterX{\vol}[2]{\textrm{Vol}_{#1}(#2}{)}{#1, #2}
\newcommand{\bbmu}{\boldsymbol{\mu}}
\newcommand{\bell}{\boldsymbol{\ell}}
\renewcommand\footnotemark{}
\title{\textbf{Maximizing Slice-Volumes of Semialgebraic Sets \\ using Sum-of-Squares Programming}
}
\author{Jared Miller$^1$, Chiara Meroni$^2$, Matteo Tacchi$^3$,  Mauricio Velasco$^4$
\thanks{$^1$J. Miller is with the Automatic Control Laboratory (IfA), Department of Information Technology and Electrical Engineering (D-ITET), ETH Z\"{u}rich, Physikstrasse 3, 8092, Z\"{u}rich, Switzerland (\href{mailto:jarmiller@control.ee.ethz.ch}{jarmiller@control.ee.ethz.ch}).}
\thanks{$^2$ C. Meroni is with the ETH Institute for Theoretical Studies, Z\"urich, Switzerland (\href{mailto:chiara.meroni@eth-its.ethz.ch}{chiara.meroni@eth-its.ethz.ch}).}
\thanks{$^3$ M. Tacchi is with the Univ. Grenoble Alpes, CNRS, Grenoble INP (Institute of Engineering Univ. Grenoble Alpes), GIPSA-lab, 38000 Grenoble, France (\href{mailto:matteo.tacchi@gipsa-lab.fr}{matteo.tacchi@gipsa-lab.fr}).}
\thanks{$^4$ M. Velasco is with the Departamento de Inform\'{a}tica, Universidad Cat\'{o}lica del Uruguay, Av. 8 de Octubre 2738, 11600 Montevideo, Departamento de Montevideo, Uruguay (\href{mailto:mauricio.velasco@ucu.edu.uy}{mauricio.velasco@ucu.edu.uy}).
}
\thanks{J. Miller was partially supported by NSF grants  CNS--1646121, ECCS--1808381 and CNS--2038493, AFOSR grant FA9550-19-1-0005, ONR grant N00014-21-1-2431, and the Swiss National Science Foundation under NCCR Automation, grant agreement 51NF40\_180545.}
\thanks{Mauricio Velasco was partially supported by ANII (Uruguay) via Fondo Clemente Estable grant FCE-1-2023-1-176172.}
}
\begin{document}

\maketitle


\begin{abstract}
\label{sec:abstract}


This paper presents an algorithm to maximize the volume of an affine slice through a given semi-algebraic set. This slice-volume task is formulated as an infinite-dimensional linear program in continuous functions, inspired by prior work in volume computation of semialgebraic sets. A convergent sequence of upper-bounds to the maximal slice volume are computed using the moment-Sum-of-Squares hierarchy of semidefinite programs in increasing size. The computational complexity of this scheme can be reduced by utilizing topological structure (in dimensions 2, 3, 4, 8) and symmetry.
This numerical convergence can be accelerated through the introduction of redundant Stokes-based constraints. Demonstrations of slice-volume calculation are performed on example sets.

\end{abstract}

\textbf{Keywords:} Slicing, Volume, Polynomial Optimization, Sum-of-Squares, Semidefinite Programming

\section{Introduction}
\label{sec:introduction}

Given a compact set $L \in \R^{n}$ contained within a ball of finite radius $R>0$, the slice-volume-maximizing problem considered in this paper is:
\begin{prob}
\label{prob:slice}
Find a direction $\theta$ and an affine offset $t$ to supremize \begin{align}
    P^* =& \sup_{\theta, t} \textrm{Vol}_{n-1}\left((\theta \cdot x = t) \cap L \right) \label{eq:slice}\\
    & \theta \in S^{n-1}, \ t \in [-R, R]. \nonumber 
\end{align}
\end{prob}

The slice-volume task in Problem \ref{prob:slice} is a particular form of a parametrically-defined volume approximation problem. The slice-volume task can also be interpreted as finding the supremal value of the Radon transform of the $0/1$ indicator function on $L$ \cite{helgason1980radon}.


Slices of sets and computation of slice-volumes are of interest in different areas of mathematics. 
One such research area is geometric tomography, which involves the reconstruction of shapes from an assemblage of their slices along multiple $(\theta, t)$ cuts \cite{Gardnerbook}. The development of geometric tomomography (for convex bodies) was motivated by the Busemann-Petty problem, which asked (for convex and usually symmetric shapes $L, T$) if the relation $\forall \theta \in S^n: \textrm{Vol}_{n-1}\left((\theta \cdot x =0 ) \cap L \right) \geq \textrm{Vol}_{n-1}\left((\theta \cdot x = 0) \cap T \right)$ implied that $\textrm{Vol}_n(L) \geq \textrm{Vol}_n(T)$. This implication is true in dimensions $n \leq 4$, and has counterexamples with $n \geq 5$ \cite{Gardner19941,Gardner19942,Koldobsky1998,GKS1999,Zhang1999}.
The study of the Busemann-Petty problem motivated the creation of new geometric objects (e.g.,  intersection bodies), and the establishment of currently open problems such as Bourgain's slicing problem and the KLS conjecture 
\cite{KlaMil22:SlicingProblem,klartagLehec2022bourgains,Klartag:LogBoundBourgain,GiannopoulosKoldobskyZvavitch2023}.
A full characterization of slice-properties (e.g., combinatorial types, volume) has been conducted for polytopes
such as simplices \cite{webb,Koenig2021,Walkup68:Simplex5D}, cubes \cite{Kball1,Kball2,Lawrence79,pournin-scubesections,Koenig2021,fukudaMutoGoto97-5cubeslices,Moodyetal2013}, cross-polytopes \cite{Liu+Tkocz,Koenig2021}, and other norm-balls \cite{MeyerPajor88,Khovanskii-slices2006}. The combinatorial structure of central hyperplane sections $(t=0)$ for polytopes can be studied with purely algebraic tools for all polytopes \cite{BBMS:IntersectionBodiesPolytopes}.
The work in \cite{brandenburg2023slice} investigated affine hyperplane sections $(t \in [-R, R])$ for polytopes, with a full classification of combinatorial types of sliced sets based on a hyperplane arrangement. The research in \cite{brandenburg2023slice} exactly solved of the polyhedral slice-volume  in polynomial time (in fixed dimension).

The dominant class of algorithms to compute volumes of nonconvex bodies are Monte-Carlo methods \cite{lovasz2006simulated, isaac2023algorithm}, considering that even volume computation of polytopes is \#-P hard in general \cite{dyer1988complexity}.
The moment-\ac{SOS} hierarchy of \acp{SDP} \cite{parrilo2000structured, lasserre2009moments} offers one approach to compute the volume of \ac{BSA} sets \cite{henrion2009approximate, tacchi2022exploiting}. The volume computation problem is posed as a primal-dual pair of infinite-dimensional \acp{LP} over nonnegative Borel measures and continuous functions, in which the Lebesgue measure on $L$ is a feasible and optimal solution. The function formulation can be interpreted as finding a smooth over-approximator to the indicator function with minimum Lebesgue integral.
The primal-dual \acp{SDP} in the hierarchy correspond to increasing the polynomial degree of the indicator-approximator (or the number of moments considered on the dual), resulting in a nonincreasing sequence of upper bounds to the volume of the true set. 


This sequence of bounds will converge in degree $k$ as $O(k^{-z})$ for some constant $z>0$ \cite{schlosser2024convergence} (improved from a  $O(1/\log \log k)$ bound in \cite{korda2018convergence}). Such a slow convergence rate is partly due to Gibbs phenomena (nonvanishing oscillations) in the one-sided approximation of the discontinuous indicator function by smooth polynomials. Redundant Stokes-based constraints on the boundary-defining polynomials do not change the \ac{LP} optima, but allow for additional degrees of freedom in the finite-degree truncation and empirically demonstrate improved convergence \cite{lasserre2017computing, tacchi2023stokes}. Stokes relations were also used to compute moments of the Hausdorff measure of \ac{BSA} sets using the moment-\ac{SOS} hierarchy \cite{lasserre2020boundary}.

Moment-\ac{SOS}-based volume approximation also has applications in the analysis and control of dynamical systems. Problem instances include reachable set and region of attraction estimation \cite{Henrion_2014, majumdar2014convex} (from outside) and \cite{korda2013inner} (from inside), maximum positively invariant sets \cite{oustry2019inner}, maximum controlled invariant sets \cite{korda2014convex}, and global attractors \cite{schlosser2021converging}. It remains an open problem to generate and apply Stokes constraints towards dynamical systems volume maximization programs in order to sharpen convergence.

We also note that \cite{lairez2019computing} computes the volume of semialgebraic sets through Picard-Fuchs formulae for the period of rational integrals. They recursively perform volume computation by aggregating the volume of lower-dimensional slices using Oaku's method for parameter-dependent integration over semialgebraic sets \cite{oaku2013algorithms}, but they do not optimize to find the maximal slice-volume.









The contributions of this work are:
\begin{itemize}
    \item A pair of infinite-dimensional \acp{LP} to solve the slice-volume Problem \ref{prob:slice}.
    \item An accounting of computational complexity for moment-\ac{SOS} truncations of these \acp{LP}.
    \item Identification of complexity reduction mechanisms using algebraic and topological structure.
    \item Application of Stokes-based methods to improve the numerical convergence of slice-volume approximation, including the introduction of Stokes schemes that respect symmetries.    
\end{itemize}

To the best of our knowledge, this is the first work that performs slice-volume maximization of semialgebraic sets (beyond simpler convex sets such as polytopes \cite{brandenburg2023slice} and ellipsoids).

This paper has the following structure: Section \ref{sec:preliminaries} reviews preliminaries including notation, measure theory, and existing moment-\ac{SOS} methods for computing the (standard) volume of sets. Section \ref{sec:slice_volume} poses a primal-dual pair of infinite-dimensional convex \acp{LP} that solve the slice-volume task in Problem \ref{prob:slice}. Section \ref{sec:slice_sos} performs and analyzes a moment-\ac{SOS} truncation of the slice-volume \acp{LP} into a hierarchy of finite-dimensional \acp{SDP} in increasing size. Section \ref{sec:reduce_complex} reduces the computational complexity of these \ac{SOS} \acp{SDP} by applying symmetry, algebraic structure, and topological structure (in dimensions $2, 3, 4,$ and $8$). Section \ref{sec:stokes} utilizes Stokes methods from \cite{tacchi2023stokes} to further improve the numerical performance of slice-volume \ac{SOS} programs. Section \ref{sec:examples} demonstrates this slice-volume approximation scheme on numerical examples. Section \ref{sec:extensions} extends the slice-volume maximization work to maximize and approximate the Radon transform of functions.
Section \ref{sec:conclusion} contains conclusions the paper. Appendix \ref{app:duality} provides a proof of strong duality between the slice-volume infinite-dimensional \ac{LP} in measures and in continuous functions. Appendix \ref{app:poly_approx} proves that polynomial auxiliary functions can be used to solve the slice-volume task. Appendix \ref{app:duality_stokes} proves strong duality for the Stokes-constrained slice-volume program.
\section{Preliminaries}
\label{sec:preliminaries}

\begin{acronym}[WSOS]
\acro{BSA}{Basic Semialgebraic}
\acroindefinite{BSA}{a}{a}

\acro{LP}{Linear Program}
\acroindefinite{LP}{an}{a}






\acro{PSD}{Positive Semidefinite}



\acro{SDP}{Semidefinite Program}
\acroindefinite{SDP}{an}{a}

\acro{SOS}{Sum of Squares}
\acroindefinite{SOS}{an}{a}

\acro{WSOS}{Weighted Sum of Squares}

\end{acronym}

\subsection{Notation}
The $n$-dimensional real Euclidean vector space is $\R^n$. The dot product between vectors $x, y \in \R^n$ is $x \cdot y = \sum_i x_i y_i$. The $n$-dimensional unit ball of radius $R$ is $B_R^n = \{x \in \R^n \mid \norm{x}_2 \leq R\}.$ The set of natural numbers is $\N$, and the subset of natural numbers between $a$ and $b$ is $a..b$. Given a set of indices $a..b$ and a set of elements $\{g_i\}_{i=a}^b,$ an index $i \in a..b$, the notation $g_{-i}$ will refer to $\{g_{i'}\}_{i'=a}^b \setminus g_i$.

The set of $n$-dimensional multi-indices is $\N^n$. The sphere in $n$-dimensional space (with dimension $n-1$) is $S^{n-1} = \partial B_1^{n}$. The group of orthogonal matrices of dimension $n \times n$ is $O(n)$. The $n \times r$ Steifel manifold (matrices $Z \in \R^{n \times r}$ with $Z^T Z = I$) is $\mathbb{V}_r(\R^n)$.

The set of polynomials with real coefficients in an indeterminate value $x \in \R^n$ is $\R[x]$. For every polynomial $p \in \R[x]$, there exists a unique subset $\A \in \N^n$ and set of coefficients $\{p_\alpha \neq 0 \}_{\alpha \in \A}$ indexed by $\alpha \in \A$ such that $p(x) = \sum_{\alpha \in \A} p_\alpha x^\alpha$. 
There exists a unique finite-cardinality  subset $\A'$ for each $p$ such that each $\forall \alpha \in \A': p_\alpha \neq 0.$ The degree of a multi-index is $\abs{\alpha} = \sum_i \alpha_i$. The degree of a polynomial $p$ is $\deg p = \max_{\alpha \in \A} \abs{\alpha}.$ The set of polynomials with degree at most $d$ is $\R[x]_{\leq d} \subset \R[x].$ The set of $q$-dimensional vectors of polynomials is $\R[x]^q$.

\subsection{Analysis and Measure Theory}

Refer to \cite{barvinok2002convex, tao2011introduction} for more detail about this section. The closure of a set $X \subseteq \R^n$ is $\textrm{cl}(X)$, the boundary of $X$ is $\partial(X)$, and the relative interior of $X$ is $\textrm{int}(X)$. 
The set of continuous functions over a space $X$ is $C(X)$, and its subcone of nonnegative functions over $X$ is $C_+(X) \subset C(X).$ The set of bounded measureable functions over $X$ is $B(X)$, and if $X$ is compact then $B(X) \supset C(X)$. 
The set of nonnegative Borel measures over $X$ is $\Mp{X}.$ The sets $C_+(X)$ and $\Mp{X}$ possess a duality pairing $\inp{\cdot}{\cdot}$ by Lebesgue integration with $\forall f \in C_+(X), \ \mu \in \Mp{X}: \ \inp{f}{\mu} = \int_X f(x) d \mu(x)$. This duality pairing is an inner product when $X$ is compact, for which $C_+(X)$ and $\Mp{X}$ are topological duals. The pairing $\inp{\cdot}{\cdot}$ will be extended to represent Lebesgue integration between elements of $C(X)$ and $\Mp{X}.$

A $0/1$ indicator function $I_A$ for $A \subseteq X$ takes value $I_A(x)=1$ if $x \in A$ and $I_A(x) = 0$ if $x \not\in A$. The containment relation $A \subseteq X$ implies that $\Mp{A} \subseteq \Mp{X}$ and $C(A) \supseteq C(X)$.
The $n$-dimensional volume $\textrm{Vol}_n$ of a set $A \subseteq X \subset \R^n$ is $\textrm{Vol}_n(A) = \int_A dx = \int_X I_A(x) dx$. The Radon Transform of a function $f \in B(X)$ with $\theta \in S^{n-1}, \ t \in \R$ is
\begin{align}
    \mathcal{R}f(\theta, t) = \int_{(\theta \cdot x = t) \cap X} f(x) dx. \label{eq:radon}
\end{align}
The Radon transform is an even function with $\mathcal{R}f(\theta, t) = \mathcal{R}f(-\theta, -t).$ The objective $\textrm{Vol}_{n-1}\left((\theta \cdot x = t) \cap L \right)$ from \eqref{eq:slice} may be expressed as $\mathcal{R} I_L(\theta, t)$. In practice, we will consider Radon transforms in \eqref{eq:radon} where the affine offset $t$ is restricted to lie in the bounded interval $[-R, R]$ with $R \in [0, \infty)$.

The measure of a set $A \subseteq X$ w.r.t. $\mu \in \Mp{X}$ is $\mu(A)$. The measure of $A$ may also be written as $\inp{I_A}{\mu} = \mu(A).$
The support of $\mu \in \Mp{X}$ is the set of points $x \in X$ such that every open neighborhood $N_x$ of $x$ has $\mu(N_x) > 0$. The set of measures supported in $A \subseteq X$ is $\Mp{A}$. The mass of $\mu \in \Mp{X}$ is $\mu(X) = \inp{1}{X}$, and $\mu$ is a probability measure if this mass is 1. For any two measures $\mu \in \Mp{X}, \ \nu \in \Mp{Y}$, the product measure $\mu \otimes \nu$ is the unique measure satisfying $\forall (A, B) \subset X \times Y: (\mu \otimes \nu)(A \times B)= \mu(A) \nu(B)$.

The Dirac delta $\delta_{x'}$ supported at $x' \in X$ is a probability measure satisfying $\forall f \in C(X): \inp{f}{\delta_{x'}} = f(x')$. The Lebesgue measure $\lambda_A$ of $A \subseteq X$ is the unique measure satisfying $\inp{f}{\lambda_A} = \int_{A} f(x) dx.$ The notation $\sigma_A$ will refer to the Hausdorff (surface area) measure of $\partial A$.

The pair $\mu, \nu \in \Mp{X}$ satisfies a domination relation $(\mu \geq \nu)$ if there exists a $\hat{\nu} \in \Mp{X}$ such that $\mu = \nu + \hat{\nu}$. Domination $\mu \geq \nu$ implies that $\inp{1}{\mu} \geq \inp{1}{\nu}$.

The pushforward of a map $Q: X \rightarrow Y$ along a measure $\mu \in \Mp{X}$ is the unique $Q_\# \mu$ satisfying $\forall f \in C(Y): \inp{f(y)}{Q_\# \mu(y)} = \inp{f(Q(x))}{\mu(x)}.$ The  projection map $\pi^{x}: (x, y) \mapsto x$ has a pushforward operator of $\pi^{x}_\#$. For any $\eta \in \Mp{X \times Y}$, the pushforward projection $\pi^x_\# \eta$ returns the $x$-marginal of $\eta$.


\subsection{Moment-SOS Methods for Volume Computation}


\label{sec:prelim_volume}

Let $L \subseteq X$ be a set.
Any function $w(x) \in C(X)$ that satisfies the following pair of constraints is a continuous over-approximation to $I_L(x)$ (also written as $w \geq I_L$),
\begin{subequations}
\begin{align}
    w(x) &\geq 0 \qquad  \forall x \in X \\
    w(x) &\geq 1 \qquad  \forall x \in L.
\end{align}
\end{subequations}
The following program therefore has an infimal optimal value equal to $\textrm{Vol}_n(L)$  \cite{henrion2009approximate},
\begin{subequations}
\label{eq:vol_approx}
\begin{align}
    V^* &= \inf \int_{X} w(x) dx \\
    & w(x) \geq 0 & \forall x \in X \label{eq:vol_approx_0} \\
    & w(x) \geq 1 & \forall x \in L \label{eq:vol_approx_1}\\
    & w(x) \in C(X). \label{eq:vol_approx_c}
\end{align}
\end{subequations}

Program \eqref{eq:vol_approx} is an infinite-dimensional \ac{LP} in the variable $w(x)$, in which the affine constraints \eqref{eq:vol_approx_0} and \eqref{eq:vol_approx_1} constrain the value of $w(x)$ at each point $x \in X$. Every $w$ that is feasible for \eqref{eq:vol_approx_0}-\eqref{eq:vol_approx_c} possesses a 1-superlevel containment relation of $\{x \mid w(x) \geq 1\} \supseteq L$.

The dual of \eqref{eq:vol_approx} is  \iac{LP} with respect to the measures $\mu, \hat{\mu}$ with
\begin{subequations}
\label{eq:vol_meas}
\begin{align}
    D^* &= \sup \ \inp{1}{\mu} \\
    & \lambda_X = \mu + \hat{\mu} \\
    & \mu \in \Mp{L}, \ \hat{\mu} \in \Mp{X}.
\end{align}
\end{subequations}
The optimal solution of \eqref{eq:vol_meas} is $\mu^* = \lambda_{L}$ and $\hat{\mu}^* = \lambda_{\textrm{cl}(X-L)}$, in which $D^* = \textrm{Vol}_n(L)$. The moments of the measure $\lambda_{L}$ are generically not available in advance, otherwise the volume approximation problem would have been solved by computing $\inp{1}{\lambda_L}$. 

Approximation algorithms must be utilized in order to solve the infinite-dimensional programs \eqref{eq:vol_meas} and \eqref{eq:vol_approx} using finite-dimensional computation techniques. One such method to truncate the infinite-dimensional function nonnegativity constraints  is the moment-\ac{SOS} hierarchy of \acp{SDP} \cite{lasserre2009moments}.

A polynomial $p \in \R[x]$ is nonnegative in $\R^n$ if $\forall x \in \R^n: \ p(x) \geq 0$. A sufficient condition for nonnegativity of $p$ is if there exists a size $s \in \N$, a polynomial vector $v \in \R[x]^s$, and \iac{PSD} \textit{Gram} matrix $Q \in \psd_+^s$ such that $p(x) = v(x)^T Q v(x)$. Such a $p$ is called an \ac{SOS} polynomial, because there exists a vector $q(x) \in \R[x]^s$ with $q(x) = Q^{1/2} v(x)$ such that $p(x) =  q(x)^T q(x) = \sum_{j=1}^s q_j^2(x).$ The set of \ac{SOS} polynomials is $\Sigma[x] \in \R[x]$, and the restricted set of  \ac{SOS} polynomials with degree $\leq 2k$ is $\Sigma[x]_{\leq 2k}$ (such that $\max_j \deg q_j \leq k$). The set of \ac{SOS} polynomials equals the set of nonnegative polynomials only in the case of univariate polynomials, quadratics in any number of variables, and bivariate quartics \cite{hilbert1888darstellung}.

\Iac{BSA} set $\K$ is a set described by a finite number of bounded-degree of polynomial inequality constraints $\K = \{x \mid g_i(x) \geq 0\}$. \ac{BSA} sets are closed under intersections by concatenation of constraints. Semialgebraic sets are the closures of \ac{BSA} sets under unions and projections.

A sufficient condition for a polynomial $p(x)$ to be nonnegative over $\K$ is if there exists a representation of $p(x)$ as in \cite{putinar1993compact}
\begin{subequations}
\label{eq:psatz_noeps}
\begin{align}
p(x) = &\sigma_0(x) + \textstyle \sum_i {\sigma_i(x)g_i(x)} \\
& \sigma_0, \sigma_i \in \Sigma[x], \quad \phi_j \in \R[x].
\end{align}
\end{subequations}
The \ac{WSOS} cone $\Sigma[\K]$ is the class of polynomials that have a representation in the form of \eqref{eq:psatz_noeps} w.r.t. the describing polynomials $\{g_i(x)\}$ of $\K$. The degree-restricted cone $\Sigma[\K]_{\leq 2k} \subset \Sigma[\K]$ is the set of all polynomials $p(x)$ with representation in \eqref{eq:psatz_noeps} such that $\deg \sigma_0 \leq 2k$ and $\forall i: \deg \sigma_i g_i \leq 2k$. We note that the degrees of the multiplier polynomials $\sigma$ to verify nonnegativity of $p$ over $\K$ (if they exist) could be exponential in $n$ and $\deg p$ \cite{nie2007complexity} (without the degree restriction for $\Sigma[x]_{\leq 2k}$ in place).

The set $\K$ satisfies a \textit{ball constraint} if there exists an $R \in [0, \infty)$ such that $R^2 - \norm{x}_2^2 \in \Sigma[\K]$. Satisfaction of a ball constraint is a sufficient condition for compactness of $\K$, but the converse does not always hold \cite{cimpric2011closures}. A ball constraint implies that $\Sigma[\K]$ contains the set of all positive polynomials over $\K$.


A degree-$k$ truncation of \eqref{eq:vol_approx} for $d \in \N$ involves restricting $w$ to $\R[x]_{\leq 2k}$, replacing constraints \eqref{eq:vol_approx_0} and \eqref{eq:vol_approx_1} by $w \in \Sigma[X]_{\leq 2k}$ and $w - 1 \in \Sigma[L]_{\leq 2k}$ respectively, and solving the resultant \ac{SDP} to obtain a bound $V_k^*$. If the \ac{BSA} sets $X$ and $L$ each satisfy a ball constraint, then the optimal values will satisfy $\lim_{k \rightarrow \infty} V_k^* = V_k.$ This general process of increasing the degree of $k$ is called the moment-\ac{SOS} hierarchy. 

Figure \ref{fig:ind_approx_over} visualizes an \ac{SOS} scheme for volume approximation for the set $L = [0.1, 0.5] \cup [0.8, 0.9]$ in the space $X = [0, 1]$. The degree-$k$ \ac{SOS} truncation of volume approximation involves a polynomial $w \in \R[x]_{\leq 2k}$ obeying the constraints $w \in \Sigma[(x(1-x))]_{\leq 2k}$, $w \in \Sigma[(x-0.1)(0.5-x)]_{\leq 2k}$ and $w -1 \in \Sigma[(x-0.8)(0.9-x)]_{\leq 2k}$. The true function $I_L$ is drawn in black in Figure \ref{fig:ind_approx_over}. The pictured \ac{SOS} polynomials $w$ have degree 6 (blue), degree 20 (red), and degree 120 (green). The resultant volume upper-bounds are documented in Table \ref{tab:value_approx}.

     \begin{figure}[!h]
         \centering
         \includegraphics[width=0.5\textwidth]{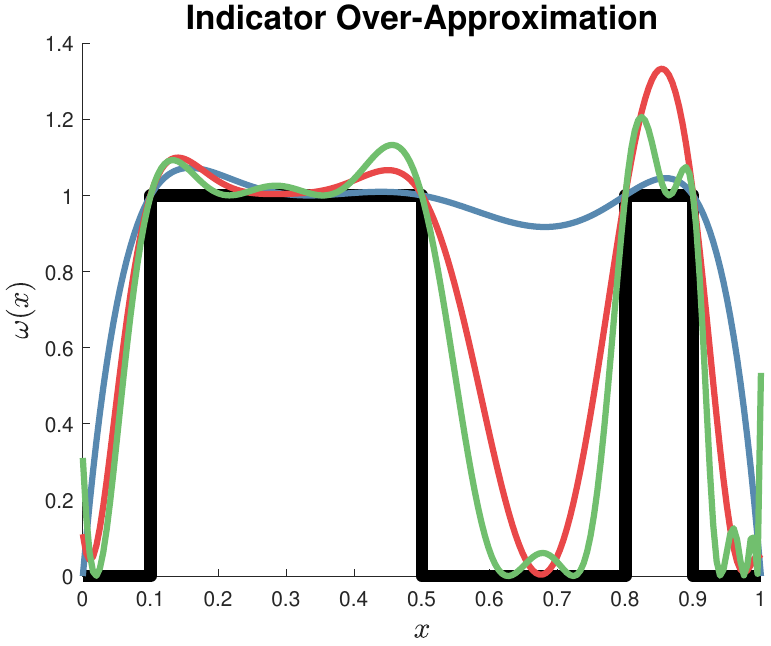}
         \caption{\label{fig:ind_approx_over}Polynomial over-approximations to $L = [0.1, 0.5] \cup [0.8, 0.9]$}       
     \end{figure}

    \begin{table}[h]
        \centering
        \caption{\label{tab:value_approx} Polynomial approximation order and volume bounds.}
        \begin{tabular}{r|c c c c}
            Function &  $k=6$ & $k=20$ & $k=120$ & Truth\\
            Volume (upper)& 0.9269 & 0.7472 & 0.6740 &  0.5 \\
        \end{tabular}

    \end{table}

Even though the 1-sublevel sets of Figure \ref{fig:ind_approx_over} are very close to $L$, the volume bounds in Table \ref{tab:value_approx} are far from the true value of 0.5. Unfortunately, there does not exist an efficient method to compute the volume of polynomial sublevel sets purely from the polynomial's coefficients \cite{guthrie2022inner}.
This large gap in volume estimates is due to an oscillatory Gibbs phenomenon \cite{gottlieb1997gibbs} involved in the approximation of the discontinuous $I_L$ by the set of continuous polynomials. The presence of Gibbs phenomena contributes towards an asymptotic convergence rate of $O(1/\log \log k)$ in the error of volume approximation \cite{korda2018convergence}.

Stokes methods (introduced further in Section \ref{sec:stokes}) \cite{tacchi2023stokes} can be used to refine volume estimates and yield faster convergence in $k$ through the addition of redundant derivative constraints.




\section{Slice-Volume}
\label{sec:slice_volume}

This section will solve the slice-volume Problem \ref{prob:slice} using infinite-dimensional \acp{LP}. We will begin with the following assumption:
\begin{itemize}
    \item[A1] There exists an $R  \in [0, \infty)$ such that $L \subseteq B_R^n$.
\end{itemize}

The ball constraint $R^2 - \norm{x}^2$ can be added to 
the description of any \ac{BSA} $L$ satisfying A1 without changing its geometry.


\subsection{Slice-Volume Variables and Supports}

The variables used in the slice-volume optimization problem are listed in Table \ref{tab:slice_variables}.  
\begin{table}[h]
    \centering
    \caption{Variables in slice-volume analysis}
    \begin{tabular}{c|l}         
          $\theta$ & Direction \\
        $t$ & Affine Offset \\
$Z$ & Local Coordinate Frame \\
$y$ & Local Coordinate on the plane $\theta \cdot x = t$
\end{tabular}    
    \label{tab:slice_variables}
\end{table}

The support sets involving variables in Table \ref{tab:slice_variables} are
\begin{subequations}
\label{eq:supp_sets}
\begin{align}
    \Omega &= \{(\theta, t) \in S^n \times [-R, R]\} \label{eq:supp_omega}\\
        \Omega_Z &= \{(\theta, t, Z) \in S^n \times [-R, R] \times \R^{n \times (n-1)} \mid [\theta, Z] \in O(n)\} \label{eq:supp_omega_Z}\\
        \Psi &= \Omega_Z \times B_R^{n-1} \label{eq:supp_psi_all} \\
    \Psi_L &= \Psi \cap \{(\theta t + Z y) \in L\} \label{eq:supp_psi}
\end{align}
\end{subequations}
The domain of the $R$-truncated Radon transform \eqref{eq:radon} is $\Omega$ from \eqref{eq:supp_omega}. The set $\Psi_L$ represents $L$ via the over-complete formulation of $x = \theta t + Z y$. Constraints \eqref{eq:supp_omega_Z} and  \eqref{eq:supp_psi} implies that $Z \in \mathbb{V}_{n-1}(\R^n)$. 

Figure \ref{fig:slice_arrows} visualizes the slicing geometry and variables from Table \ref{tab:slice_variables}. The set $L$ is the region inside the outer gray ellipsoid and outside the inner red ellipsoid. The green arrow is the slicing direction $\theta$, and the two black arrows are the columns of $Z$. The value $t$ is the affine offset in the direction of $\theta$ between the origin (center of the coordinate frame) and the blue slicing plane. In this manner, a point $x \in L$ (black asterisk) may be represented by an appropriate choice of $x = \theta t + Z y.$

\begin{figure}[h]
    \centering
    \includegraphics[width=0.5\linewidth]{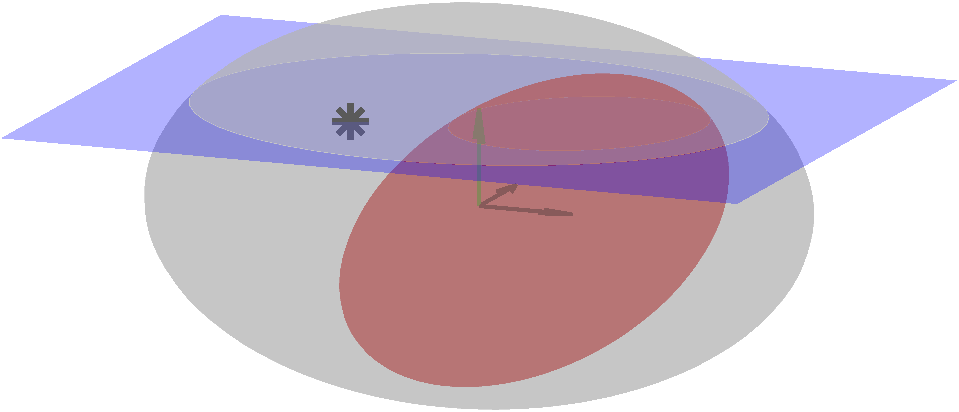}
    \caption{Slicing geometry and variables from Table \ref{tab:slice_variables}.}
    \label{fig:slice_arrows}
\end{figure}





\subsection{Slice-Volume Function Program}

A functional \ac{LP} for the slice-volume task will be developed based on the smooth-indicator-approximating volume computation methodology from Section \ref{sec:prelim_volume}.
For notational convenience, let us use define the $y$-integration $\Lambda_R: C(\Psi) \rightarrow C(\Omega_Z)$ as $\forall w(\theta, t, Z, y) \in C(\Psi)$:
\begin{align}
    \Lambda_R w(\theta, t, Z) = \int_{B_{R}^{n-1} } w(\theta, t, Z, y) dy. \label{eq:integrate_y}
\end{align}

We introduce a continuous auxiliary function variable $w(\theta, t, Z, y)$ in order to pose the following slice-volume maximization program:
\begin{subequations}
\label{eq:slice_upper}
\begin{align}
    p^* = &\inf_{\gamma \in \R} \quad \gamma \\
    & \gamma \geq  \Lambda_R w(\theta, t, Z) & & \forall (\theta, t, Z) \in \Omega_Z  \label{eq:slice_upper_int}\\
    & w(\theta, t, Z, y) \geq 1 & & \forall (\theta, t, Z, y) \in \Psi_L \label{eq:slice_upper_psi}\\
    & w(\theta, t, Z, y) \geq 0 & &\forall (\theta, t, Z, y) \in \Psi \label{eq:slice_upper_psi_comp} \\
    & w(\theta, t, Z, y) \in C(\Psi) \label{eq:slice_upper_w}.
\end{align}
\end{subequations}

\begin{lem}
\label{lem:upper_bound}
    Program \eqref{eq:slice_upper} is an upper-bound on Problem \ref{eq:slice} ($p^* \geq P^*$).
\end{lem}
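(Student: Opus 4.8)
The plan is to show that any feasible tuple $(\gamma, w)$ for \eqref{eq:slice_upper} furnishes an upper bound $\gamma \ge P^*$, which then yields $p^* \ge P^*$ after taking the infimum over feasible $\gamma$. The key observation is that for a fixed direction $\theta \in S^{n-1}$ and offset $t \in [-R,R]$, the slice $(\theta \cdot x = t) \cap L$ can be parametrized by a choice of local orthonormal frame $Z$ with $[\theta, Z] \in O(n)$, via the map $y \mapsto \theta t + Z y$; because $Z$ has orthonormal columns this map is an isometry from $\R^{n-1}$ onto the hyperplane $\{\theta \cdot x = t\}$, so it is measure-preserving with respect to $(n-1)$-dimensional Lebesgue/Hausdorff measure. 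Hence
\begin{align}
    \textrm{Vol}_{n-1}\!\left((\theta \cdot x = t) \cap L\right) = \int_{\{y \in \R^{n-1} \,\mid\, \theta t + Z y \in L\}} dy,
\end{align}
and by Assumption A1 ($L \subseteq B_R^n$) the domain of integration is contained in $B_R^{n-1}$, so we may freely extend the integral to all of $B_R^{n-1}$ by inserting the indicator $I_L(\theta t + Z y)$.

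Next I would use the over-approximation constraints. Fix an optimal or near-optimal $(\theta^*, t^*)$ for Problem \ref{prob:slice}, pick any admissible frame $Z^*$ so that $(\theta^*, t^*, Z^*) \in \Omega_Z$, and note that the point $(\theta^*, t^*, Z^*, y) \in \Psi$ for every $y \in B_R^{n-1}$ by \eqref{eq:supp_psi_all}; moreover $(\theta^*, t^*, Z^*, y) \in \Psi_L$ precisely when $\theta^* t^* + Z^* y \in L$, by \eqref{eq:supp_psi}. Constraints \eqref{eq:slice_upper_psi} and \eqref{eq:slice_upper_psi_comp} then give $w(\theta^*, t^*, Z^*, y) \ge I_L(\theta^* t^* + Z^* y)$ for all $y \in B_R^{n-1}$. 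Integrating this pointwise inequality over $B_R^{n-1}$ and using the identity above,
\begin{align}
    \Lambda_R w(\theta^*, t^*, Z^*) \;=\; \int_{B_R^{n-1}} w(\theta^*, t^*, Z^*, y)\, dy \;\ge\; \textrm{Vol}_{n-1}\!\left((\theta^* \cdot x = t^*) \cap L\right).
\end{align}
Finally, constraint \eqref{eq:slice_upper_int} evaluated at $(\theta^*, t^*, Z^*)$ gives $\gamma \ge \Lambda_R w(\theta^*, t^*, Z^*)$, so chaining the inequalities yields $\gamma \ge \textrm{Vol}_{n-1}((\theta^* \cdot x = t^*) \cap L)$. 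Taking $(\theta^*, t^*)$ along a maximizing sequence for $P^*$ (or the maximizer, if attained) gives $\gamma \ge P^*$; taking the infimum over feasible $(\gamma, w)$ gives $p^* \ge P^*$. One should also dispatch the degenerate case where \eqref{eq:slice_upper} is infeasible, in which $p^* = +\infty$ and the bound is vacuous — though $w \equiv 1$, $\gamma = \textrm{Vol}_{n-1}(B_R^{n-1})$ is always feasible, so this does not arise.

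I expect the only real subtlety to be the measure-theoretic bookkeeping in the first step: verifying that $y \mapsto \theta t + Z y$ is indeed an isometric parametrization of the affine hyperplane and that $(n-1)$-dimensional Lebesgue measure on $\R^{n-1}$ pushes forward to the Hausdorff measure $\sigma$ restricted to the slice, so that the slice-volume $\textrm{Vol}_{n-1}$ in \eqref{eq:slice} agrees with the $y$-integral appearing in $\Lambda_R$. This is standard (it is just the statement that orthogonal coordinate changes preserve volume), but it is the hinge of the argument; everything else is a pointwise inequality integrated and then bounded by $\gamma$. A minor point worth stating explicitly is that the choice of frame $Z^*$ is immaterial — any two admissible frames differ by an element of $O(n-1)$ acting on $y$, which again preserves the integral — so the construction is well defined.
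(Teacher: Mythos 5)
Your proposal is correct and follows essentially the same route as the paper's proof: compare the feasible $w$ pointwise to the indicator of $\Psi_L$, identify $\Lambda_R$ applied to that indicator with the Radon transform $\mathcal{R}I_L(\theta,t)$ (i.e., the slice-volume), and conclude via constraint \eqref{eq:slice_upper_int}. You simply make explicit the measure-theoretic step (that $y \mapsto \theta t + Zy$ is an isometric parametrization, so the $y$-integral equals $\textrm{Vol}_{n-1}$ of the slice) that the paper treats as immediate.
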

\begin{proof}

The indicator function $w^*(\theta, t, Z, y) = I_\Psi$ will satisfy constraints \eqref{eq:slice_upper_psi}-\eqref{eq:slice_upper_psi_comp} while ignoring the continuity requirement in \eqref{eq:slice_upper_w}.  The integrated term $\Lambda_R w^*(\theta, t, Z)$ may be interpreted as the radon transform $\mathcal{R} I_L(\theta, t)$ with respect to an arbitrary coordinate frame $Z \in \mathbb{V}_{n-1}(\R^n)$. Every value $\gamma$ such that $\forall (\theta, t, Z) \in \Omega_Z: \  \gamma \geq \mathcal{R} I_L(\theta, t)$ \eqref{eq:slice_upper_int} is an upper-bound on the maximal slice volume. 

Letting $w$ be any continuous feasible solution \eqref{eq:slice_upper_psi}-\eqref{eq:slice_upper_w}, it holds that $w \geq w^*$ pointwise and also that $\Lambda_R w(\theta, t, Z, y) \geq \mathcal{R}I_L(\theta, t)$ at each $(\theta, t) \in \Omega$. It therefore holds that $p^* \geq P^*$.

\end{proof}

In order to prove equality between Problem \ref{prob:slice} and the infimum of \eqref{eq:slice_upper}, we require a notion of almost uniform convergence \cite{ash1972real, henrion2011inner}:

\begin{defn}
\label{defn:almost_uniform}
    A sequence of functions $\{f_n\}_n$ converges to $f$ \textbf{almost uniformly} in a set $X$ if for every $\epsilon>0$, there exists $A \subset X$ such that $\textrm{Vol}(A) < \epsilon$ and $f_n \rightarrow f$ uniformly on $X \setminus A$.
\end{defn}

\begin{thm}
\label{thm:slice_no_relaxation}
    Under A1, there is no relaxation gap between Problem \ref{prob:slice} and Program \eqref{eq:slice_upper} ($p^* = P^*$).
\end{thm}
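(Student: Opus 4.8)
The plan is to establish $p^* \le P^*$, since Lemma \ref{lem:upper_bound} already gives $p^* \ge P^*$. The strategy is to construct, for every $\epsilon > 0$, a continuous function $w_\epsilon \in C(\Psi)$ that is feasible for \eqref{eq:slice_upper_psi}--\eqref{eq:slice_upper_w} and whose induced objective value $\gamma_\epsilon = \sup_{(\theta,t,Z)\in\Omega_Z} \Lambda_R w_\epsilon(\theta,t,Z)$ satisfies $\gamma_\epsilon \le P^* + C\epsilon$ for some constant $C$ independent of $\epsilon$. The natural target to approximate is the indicator $w^* = I_\Psi$ used in the proof of Lemma \ref{lem:upper_bound}: we know $\Lambda_R w^* (\theta,t,Z) = \mathcal{R}I_L(\theta,t) \le P^*$ for all $(\theta,t,Z)\in\Omega_Z$, so any continuous over-approximator of $I_\Psi$ whose $y$-integral is uniformly close to that of $I_\Psi$ will do.

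First I would regularize $I_\Psi$ from above by a continuous (indeed smooth) function. Since $\Psi_L$ is a closed subset of the compact set $\Psi$, for each $\delta>0$ one can take a continuous $w_\delta$ with $w_\delta \equiv 1$ on $\Psi_L$, $0 \le w_\delta \le 1$ everywhere, and $w_\delta$ supported in a $\delta$-neighborhood $\Psi_L^\delta$ of $\Psi_L$ within $\Psi$ (e.g., $w_\delta(\theta,t,Z,y) = \max(0, 1 - \mathrm{dist}((\theta,t,Z,y),\Psi_L)/\delta)$, which is continuous on the compact $\Psi$); then add a small constant $\epsilon'$ and clip if one wants strict positivity margins, but constraints \eqref{eq:slice_upper_psi}--\eqref{eq:slice_upper_psi_comp} are already satisfied by $w_\delta$ itself. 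Such $w_\delta$ is feasible for \eqref{eq:slice_upper_psi}--\eqref{eq:slice_upper_w}. It remains to bound $\Lambda_R w_\delta$. The key estimate is
\[
\Lambda_R w_\delta(\theta,t,Z) - \mathcal{R}I_L(\theta,t) \;=\; \int_{B_R^{n-1}} \bigl(w_\delta - I_{\Psi_L}\bigr)(\theta,t,Z,y)\, dy \;\le\; \mathrm{Vol}_{n-1}\bigl(\{y\in B_R^{n-1} : (\theta,t,Z,y)\in \Psi_L^\delta \setminus \Psi_L\}\bigr).
\]
The main obstacle is controlling this last quantity \emph{uniformly} over all $(\theta,t,Z)\in\Omega_Z$: the slab of the $\delta$-neighborhood of $L$ that appears in the slice could in principle have $(n-1)$-volume that does not shrink to $0$ as $\delta\to 0$, for instance if the slicing plane grazes a flat piece of $\partial L$. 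This is precisely where the almost-uniform convergence notion of Definition \ref{defn:almost_uniform} enters: rather than demanding uniform smallness, I would invoke it (or a dominated-convergence / regularity-of-Lebesgue-measure argument) to say that for Lebesgue-almost-every parameter the slab volume tends to $0$, and that the exceptional parameter set has small measure; combined with the a priori uniform bound $\Lambda_R w_\delta \le \mathrm{Vol}_{n-1}(B_R^{n-1})$ one still concludes $\sup_{(\theta,t,Z)} \Lambda_R w_\delta \to P^*$. Concretely, one writes $\Omega_Z = A_\epsilon \cup (\Omega_Z\setminus A_\epsilon)$ with $\mathrm{Vol}(A_\epsilon)<\epsilon$ and $\Lambda_R w_\delta \to \mathcal{R}I_L$ uniformly on the complement; since $\mathcal{R}I_L \le P^*$ there, and since $P^*$ is a supremum, points of $A_\epsilon$ cannot be used to beat $P^*$ by more than the (arbitrarily small, by outer regularity in $\epsilon$) amount forced by continuity of $\Lambda_R w_\delta$. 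Choosing $\delta$ small as a function of $\epsilon$ then yields $\gamma_\epsilon \le P^* + o(1)$.

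Finally, taking $\epsilon \to 0$ through this family gives $p^* \le \inf_\epsilon \gamma_\epsilon \le P^*$, and together with Lemma \ref{lem:upper_bound} we obtain $p^* = P^*$. I would also remark that feasibility of each $w_\delta$ in $C(\Psi)$ is what makes this an upper bound on the infimum in \eqref{eq:slice_upper}, and that the semialgebraicity of $L$ (hence of $\Psi_L$) guarantees the slice volume $\mathcal{R}I_L(\theta,t)$ is well-behaved enough for the continuity/measurability claims — this is presumably also where the polynomial-approximation refinement of Appendix \ref{app:poly_approx} will later plug in. The one technical point to be careful about is that $\Omega_Z$ is a manifold with boundary rather than a full-dimensional subset of Euclidean space, so "$\mathrm{Vol}$" in Definition \ref{defn:almost_uniform} should be read as the appropriate measure on $\Omega_Z$ (or one pushes forward to $\Omega$ via forgetting $Z$, using that the integrand depends on $Z$ only through $\mathrm{span}(Z)$ and $\mathcal{R}I_L$ does not depend on $Z$ at all).
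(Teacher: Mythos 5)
Your proposal takes essentially the same route as the paper: approximate the indicator $I_{\Psi_L}$ from above by continuous functions and invoke almost uniform convergence of $\Lambda_R w$ to $\mathcal{R}I_L$ to argue that the supremum constraint \eqref{eq:slice_upper_int} forces the infimum down to $P^*$; your distance-function mollifier is simply an explicit instance of the sequence the paper obtains abstractly from density of $C(\Psi)$ in $L^1(\Psi)$. You correctly identify the one delicate step (controlling $\sup_{(\theta,t,Z)}\Lambda_R w_\delta$ on the small exceptional parameter set), which the paper's own proof treats with the same brevity, so the proposal matches the paper in both strategy and level of rigor.
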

\begin{proof}

The set $C(\Psi)$ is dense in $L^1(\Psi)$, implying the existence of a sequence of functions $w^k(\theta, t, Z, y) \in C(\Psi)$  converging to $w^*$ from above 
in an $L^1$ sense:
\begin{align}
    w^k \geq w^*, \qquad   \lim_{k\rightarrow \infty} \int_{\Psi} \left(w^k(\theta, t, Z, y) - w^*(\theta, t, Z, y) \right) d\theta \; dt \; dZ \; dy  = 0.
\end{align}

By Theorem 2.5.3 of \cite{ash1972real}, $L_1$-convergence of $w^k$ implies the  existence a subsequence $w^{k'}$ converging to $w^*$ almost uniformly.

It therefore holds that $\{\Lambda_R w_k(\theta, t, Z) \}_k$ converges almost uniformly to $\mathcal{R} I_L(\theta, t)$. The pointwise upper-bound of $w^k \geq w^*$ ensures that the infimum will therefore be maintained, though a minimum may not be met (in general).






    
\end{proof}


\begin{cor} The objective value $p^*$ from \eqref{eq:slice_upper} is finite under A1.
\end{cor}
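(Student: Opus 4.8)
The plan is to exhibit an explicit feasible point of Program~\eqref{eq:slice_upper} with a manifestly finite objective value, and to combine this with Lemma~\ref{lem:upper_bound} (or equivalently Theorem~\ref{thm:slice_no_relaxation}) for a two-sided bound. Since $p^*$ is an infimum over the feasible set of~\eqref{eq:slice_upper}, producing one feasible $(\gamma, w)$ with finite $\gamma$ shows $p^* < +\infty$, while $p^* \geq P^* \geq 0$ rules out $p^* = -\infty$; hence $p^*$ is finite.

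Concretely, first I would take the constant function $w(\theta,t,Z,y) \equiv 1$ on $\Psi$. It is continuous, so it satisfies~\eqref{eq:slice_upper_w}; it satisfies $w \geq 1$ on $\Psi_L \subseteq \Psi$, hence~\eqref{eq:slice_upper_psi}; and it satisfies $w \geq 0$ on $\Psi$, hence~\eqref{eq:slice_upper_psi_comp}. Evaluating the $y$-integration~\eqref{eq:integrate_y} gives $\Lambda_R w(\theta,t,Z) = \int_{B_R^{n-1}} 1\, dy = \textrm{Vol}_{n-1}(B_R^{n-1})$, a finite constant independent of $(\theta,t,Z)$, so the choice $\gamma = \textrm{Vol}_{n-1}(B_R^{n-1})$ makes~\eqref{eq:slice_upper_int} hold. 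This pair is feasible, so $p^* \leq \textrm{Vol}_{n-1}(B_R^{n-1}) < \infty$. For the lower side, Lemma~\ref{lem:upper_bound} gives $p^* \geq P^*$, and $P^* \geq 0$ because each slice-volume $\textrm{Vol}_{n-1}\left((\theta \cdot x = t)\cap L\right)$ is nonnegative; thus $0 \leq P^* \leq p^* \leq \textrm{Vol}_{n-1}(B_R^{n-1}) < \infty$. Alternatively, one can invoke Theorem~\ref{thm:slice_no_relaxation} to write $p^* = P^*$ and note that under A1 every slice $(\theta \cdot x = t)\cap L$ lies inside an $(n-1)$-dimensional ball of radius $R$, giving the same upper bound directly.

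There is essentially no obstacle: the only point requiring a moment of care is that the constant function is genuinely admissible for the continuity requirement~\eqref{eq:slice_upper_w}, and that assumption A1 is exactly what makes the $y$-integration domain $B_R^{n-1}$ bounded (and the support $\Psi$ compact), so that $\textrm{Vol}_{n-1}(B_R^{n-1})$ is finite. If desired, I would also record the closed form $\textrm{Vol}_{n-1}(B_R^{n-1}) = \pi^{(n-1)/2} R^{n-1}/\Gamma\!\left(\tfrac{n+1}{2}\right)$ as an explicit a priori bound on $p^*$.
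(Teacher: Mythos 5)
Your proof is correct and follows essentially the same route as the paper: the constant function $w \equiv 1$ with $\gamma = \textrm{Vol}_{n-1}(B_R^{n-1})$ gives the finite upper bound, and nonnegativity gives the lower bound. The only (cosmetic) difference is that you justify $p^* \geq 0$ via Lemma~\ref{lem:upper_bound} and $P^* \geq 0$, whereas the paper appeals directly to the nonnegativity constraint \eqref{eq:slice_upper_psi_comp}; your version is if anything slightly cleaner.
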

\begin{proof}
    \textbf{Bounded from above:} The function $w(\theta, t, Z, y) = 1$ is feasible for constraints \eqref{eq:slice_upper_psi} - \eqref{eq:slice_upper_w}. The choice of $\gamma = \Lambda_R(1) = \int_{B_R^{n-1}} 1 dy = \textrm{Vol}_{n-1}(B_R^{n-1})$ is feasible for \eqref{eq:slice_upper_int}, given that $R$ is finite by A1.

    \textbf{Bounded from below:} In the case where $L=\varnothing$, the function $w = 0$ would be the minimal possible choice satisfying \eqref{eq:slice_upper_psi} - \eqref{eq:slice_upper_w}. 

    As such, the optimal value $p^*$ is bounded between $[0, \textrm{Vol}_{n-1}(B_R^{n-1})].$
\end{proof}

\begin{exmp}
    This pathological example will demonstrate almost uniform convergence of the slice-volume construction. In this example, we will restrict $t=0$ to aid in explanation. Consider the 1-dimensional disc set $L = \{x \in \R^2 \mid x_2 = 0 \} \cap B_R^2$ with $R \in (0, \infty)$. The discontinuous Radon transform of the indicator function of $L$ is 
    \begin{equation}
        \mathcal{R} I_L(\theta, 0) = \begin{cases}
            2R & \theta = \pm (0, 1) \\
            0 & \textrm{else}.
        \end{cases}
    \end{equation}

    Let $\eta \in (0, R\sqrt{2}/2)$ be a tolerance. We can construct a sequence of functions $f_\eta(x) \geq I_L(x)$ by defining
    \begin{equation}
        f_\eta(x) = \begin{cases}
            1 - \abs{x_2}/\eta & \abs{x_2} \leq \eta \\
            0 & \textrm{else}.
        \end{cases}
    \end{equation}

    The planar direction $\theta \in S^1$ will be parameterized by an angle $\phi \in [0, 2 \pi]$ under the relation $\theta = [\cos(\phi+\pi/2), \sin(\phi + \pi/2)]$.
    The Radon transform of $f_\eta$ at the translate $t=0$ in the domain $\phi \in [0, \pi/2]$ is
    \begin{equation}
                \mathcal{R}f_\eta(\phi, 0) = \begin{cases}
            \eta/\cos(\pi/2-\phi) & \phi \in [\pi/4, \pi/2]
             \\
            \eta/\sin{\phi} & \phi \in [\sin^{-1}(\eta/R), \pi/4] \\
            R(2-(R/\eta) \sin{\phi}) & \phi \in [0, \sin^{-1}(\eta/R)].
        \end{cases} \label{eq:radon_pathologic}
    \end{equation}

The transform in \eqref{eq:radon_pathologic} can be extended to $[0, 2\pi]$ by symmetry. Under this definition for $f_\eta$,
\begin{align}
    \mathcal{R}f_\eta(\phi, 0) &= 2R,& \mathcal{R}f_\eta(\sin^{-1}(\eta/R), 0) &= R,& \mathcal{R}f_\eta(\pi/2, 0) &= \eta.
\end{align}

For any $\epsilon' > 0$, we can compute the angle of $\phi' = \sin^{-1}(\eta/\epsilon')$. The volume of $\phi \in [0, 2\pi]$ with $\mathcal{R}f_\eta (\phi, 0) > \epsilon'$ is $4 \phi'$. Given an $\epsilon > 0$ with $\epsilon' = \epsilon/4$, we can choose a $\eta$  and a $\phi'$ such that $4 \phi' \leq \epsilon$. This provides almost uniform convergence, because $\eta$ and $f_\eta$ can be chosen to ensure that the volume $4\phi' \rightarrow 0$  as $\epsilon \rightarrow 0$.

\end{exmp}



\subsection{Slice-Volume Measure Program}

A dual program to \eqref{eq:slice_upper} can be derived by introducing measure variables,
\begin{table}[h]
    \centering
    \caption{Variables in slice-volume measure \ac{LP}}
    \begin{tabular}{l|l}         
          $\mu_0 \in \Mp{\Omega_Z}$ & Choice of slicing direction $(t, \theta)$ and coordinate frame $Z$ \\
        $\mu \in \Mp{\Psi_L}$ & Volume tracking\\
$\hat{\mu} \in \Mp{\Psi}$ & Slack measure \\
\end{tabular}    
    \label{tab:slice_variables_meas}
\end{table}

Define $\lambda_R^{n-1}$ as the Lebesgue measure of the ball $B^n_R$.
A convex measure \ac{LP} that relaxes Problem \ref{prob:slice} is:
\begin{subequations}
\label{eq:slice_upper_meas}
\begin{align}
    m^* = &\sup \  \inp{1}{\mu} \label{eq:slice_upper_meas_obj} \\
    &\mu_0 \otimes \lambda_{R}^{n-1} =  \mu + \hat{\mu} \label{eq:volume_complement}\\
    & \inp{1}{\mu_0} = 1 \label{eq:slice_upper_meas_prob}\\
    & \mu_0 \in \Mp{\Omega_Z}, \\
    & \hat{\mu} \in  \Mp{\Psi} \\
    & \mu \in \Mp{\Psi_L} \label{eq:slice_upper_meas_L}.
\end{align}
\end{subequations}

The measure $\mu_0$ is a probability distribution over $(\theta, t, Z)$ that picks out the best direction $\theta$,  offset $t$, and coordinate frame $Z$ for a slice. 
The objective  in \eqref{eq:slice_upper_meas_obj} is the supremal slice-volume.

\begin{thm}
\label{thm:slice_meas_upper_bound}
    Program \eqref{eq:slice_upper_meas} is an upper-bound on Problem \ref{eq:slice} with $m^* \geq P^*$.
    \label{thm:slice_upper_meas}
\end{thm}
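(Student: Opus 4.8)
The plan is to exhibit, for any feasible direction/offset pair $(\theta,t)$ and any compatible coordinate frame $Z$, a feasible triple $(\mu_0,\mu,\hat\mu)$ for \eqref{eq:slice_upper_meas} whose objective value $\inp{1}{\mu}$ equals $\textrm{Vol}_{n-1}\left((\theta\cdot x=t)\cap L\right)$; taking the supremum over $(\theta,t)$ then yields $m^*\geq P^*$. Since $S^{n-1}$ is compact and, given $\theta$, the fiber of frames $Z$ with $[\theta,Z]\in O(n)$ is nonempty (extend $\theta$ to an orthonormal basis), such a $Z$ always exists, so the construction is never vacuous.

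Concretely, first I would fix an optimal or near-optimal $(\theta^*,t^*)\in\Omega$ for Problem \ref{prob:slice} together with a choice of frame $Z^*$ so that $(\theta^*,t^*,Z^*)\in\Omega_Z$, and set $\mu_0=\delta_{(\theta^*,t^*,Z^*)}$, which is a probability measure on $\Omega_Z$ and hence satisfies \eqref{eq:slice_upper_meas_prob}. Then $\mu_0\otimes\lambda_R^{n-1}$ is the Lebesgue measure on the slice $\{(\theta^*,t^*,Z^*)\}\times B_R^{n-1}$, i.e. a copy of Lebesgue measure on the $(n-1)$-ball sitting at the chosen frame. The natural candidate for $\mu$ is the restriction of this measure to the set $\Psi_L=\Psi\cap\{\theta t+Zy\in L\}$, namely $\mu=(\mu_0\otimes\lambda_R^{n-1})|_{\Psi_L}$, and the slack $\hat\mu=(\mu_0\otimes\lambda_R^{n-1})|_{\Psi\setminus\Psi_L}$ so that \eqref{eq:volume_complement} holds by construction as a decomposition of a measure into its restrictions to a set and its complement. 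Both restrictions are nonnegative Borel measures supported where required, so \eqref{eq:slice_upper_meas_L} and the support constraint on $\hat\mu$ are met.

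The remaining point is to identify $\inp{1}{\mu}$ with the slice volume. Because $x\mapsto \theta^* t^*+Z^* x$ is an affine isometry from $B_R^{n-1}$ onto the disc $(\theta^*\cdot x=t^*)\cap B_R^n$ carrying Lebesgue measure to $(n-1)$-dimensional Hausdorff/Lebesgue measure on the affine plane, the pushforward of $\lambda_R^{n-1}$ is exactly $\textrm{Vol}_{n-1}$ on that plane; restricting to the preimage of $L$ and taking total mass gives $\inp{1}{\mu}=\textrm{Vol}_{n-1}\left((\theta^*\cdot x=t^*)\cap L\right)$, using A1 to ensure $L\subseteq B_R^n$ so that no part of the slice is lost by the truncation to $B_R^{n-1}$. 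Taking the supremum over admissible $(\theta,t)$ — or passing to the limit along a maximizing sequence if no maximizer exists — yields $m^*\geq P^*$.

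The main obstacle, and the one step worth spelling out carefully, is the measure-theoretic bookkeeping in the last paragraph: verifying that $\Psi_L$ is a Borel (indeed closed) subset of $\Psi$ so the restrictions are well-defined measures, and that the change of variables $y\mapsto\theta t+Zy$ correctly transports $\lambda_R^{n-1}$ to the $(n-1)$-volume measure on the slice. This is routine but is where A1 and the defining relations \eqref{eq:supp_sets} genuinely get used; the construction of $(\mu_0,\mu,\hat\mu)$ itself and the verification of \eqref{eq:volume_complement}–\eqref{eq:slice_upper_meas_L} are immediate once the slice is written in the over-complete coordinates $x=\theta t+Zy$. One should also note that, exactly as in the function program, a maximizer of \eqref{eq:slice_upper_meas} need not exist — only the supremum is controlled — so the argument establishes the inequality $m^*\geq P^*$ rather than an attained equality.
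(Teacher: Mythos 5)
Your construction is the same as the paper's: a Dirac $\mu_0$ at a chosen $(\theta^*,t^*,Z^*)$, with $\mu$ the restriction of $\mu_0\otimes\lambda_R^{n-1}$ to the preimage $Y=\{y\in B_R^{n-1}\mid \theta^* t^*+Z^* y\in L\}$ and $\hat\mu$ the complementary restriction, followed by a supremum over $(\theta,t)$. Your write-up is if anything slightly more careful than the paper's, since you explicitly justify that the affine isometry $y\mapsto\theta^* t^*+Z^* y$ transports $\lambda_R^{n-1}$ to the $(n-1)$-dimensional volume on the slice, a step the paper leaves implicit.
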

\begin{proof}
    Let $(\theta', t') \in \Omega$ be a slicing direction and offset. We will now construct measures from Table \ref{tab:slice_variables_meas} from $(\theta, t)$ that are feasible for \eqref{eq:volume_complement}-\eqref{eq:slice_upper_meas_L}. Let $Z' \in \R^{(n-1) \times n}$ be a matrix such that $[\theta', \, Z'] \in O(n)$.    
    We let $\mu_0' = \delta_{\theta=\theta'} \otimes \delta_{t=t'} \otimes \delta_{Z = Z'}$ be a probability distribution representing the slicing choice $(\theta', t', Z')$.  
    
    We can construct a set $Y = \{y \in B^{n-1}_R \mid \theta' t' + Z' y \in L\}$ that has a Lebesgue measure $\lambda_Y \leq \lambda_R^{n-1}$. The measures of $\mu' = \mu_0' \otimes \lambda_Y$ and $\hat{\mu}' = \mu_0' \otimes  \otimes \lambda_{\textrm{cl}(B^{n-1}_R - Y)}$ can therefore be defined to satisfy the domination relation \eqref{eq:volume_complement}.

    A measure solution $(\mu_0', \hat{\mu}', \mu')$ therefore exists for every possible choice of $(\theta', t') \in \Omega$. The feasible set of measures is therefore a superset of the (induced) feasible set of $\Omega, \Omega$, implying that $m^* \geq P^*$.    
\end{proof}

\begin{lem}
    The measure solutions to \eqref{eq:slice_upper_meas} are bounded under A1.
\label{lem:meas_bounded}
\end{lem}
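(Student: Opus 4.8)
The plan is to bound the masses of the three measures $\mu_0$, $\mu$, and $\hat\mu$ one at a time, exploiting the structure of the constraints in \eqref{eq:slice_upper_meas}. First, $\mu_0$ is handled directly: the probability normalization \eqref{eq:slice_upper_meas_prob} gives $\inp{1}{\mu_0} = 1$, so $\mu_0$ has unit mass. Since the product measure $\mu_0 \otimes \lambda_R^{n-1}$ has mass equal to $\inp{1}{\mu_0}\cdot \inp{1}{\lambda_R^{n-1}} = \textrm{Vol}_{n-1}(B_R^{n-1})$, and this quantity is finite precisely because $R$ is finite under A1, the left-hand side of the domination relation \eqref{eq:volume_complement} has finite total mass.

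Next I would read off the bounds on $\mu$ and $\hat\mu$ from \eqref{eq:volume_complement}. Both $\mu$ and $\hat\mu$ are nonnegative measures whose sum is $\mu_0 \otimes \lambda_R^{n-1}$; since $\hat\mu \geq 0$ we get $\inp{1}{\mu} \leq \inp{1}{\mu + \hat\mu} = \textrm{Vol}_{n-1}(B_R^{n-1})$, and symmetrically $\inp{1}{\hat\mu} \leq \textrm{Vol}_{n-1}(B_R^{n-1})$. (One should note that $\Psi_L \subseteq \Psi$, so restricting $\mu$ to live on $\Psi_L$ only shrinks its admissible mass; no subtlety there.) Thus all three masses are bounded by the constant $\max\{1, \textrm{Vol}_{n-1}(B_R^{n-1})\}$, which is finite under A1. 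Since the supports $\Omega_Z$, $\Psi_L$, $\Psi$ are all compact subsets of Euclidean space — $\Omega_Z$ and $\Psi$ being closed and bounded by their defining relations in \eqref{eq:supp_sets} together with A1, and $\Psi_L$ being a closed subset of $\Psi$ — a uniform bound on the mass is exactly what is meant by boundedness of the measure solutions (boundedness in total variation norm), so this suffices.

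The only point requiring a word of care is that the argument presumes the feasible set is nonempty and that product-measure mass factors as claimed; the former is established in Theorem \ref{thm:slice_upper_meas}, and the latter follows from the defining property $(\mu_0 \otimes \nu)(A \times B) = \mu_0(A)\nu(B)$ applied with $A = \Omega_Z$, $B = B_R^{n-1}$. I do not anticipate any genuine obstacle here; the lemma is a routine consequence of the normalization constraint, the domination constraint, and finiteness of $R$. If one wanted to be thorough one could also remark that a uniform mass bound together with tightness (automatic here, since all supports are contained in a single compact set) gives weak-$*$ precompactness of the feasible set, which is what makes the bound useful for later arguments — but that is an aside rather than part of the statement.
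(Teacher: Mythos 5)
Your proposal is correct and follows essentially the same route as the paper's proof: bound the mass of $\mu_0$ by the normalization constraint \eqref{eq:slice_upper_meas_prob}, use the domination relation \eqref{eq:volume_complement} together with nonnegativity to bound the masses of $\mu$ and $\hat{\mu}$ by $\textrm{Vol}_{n-1}(B_R^{n-1})$, and combine this with compactness of the supports $\Omega_Z$, $\Psi$, and $\Psi_L$ under A1. No gaps; the remarks on tightness and weak-$*$ precompactness are a harmless addition.
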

\begin{proof}
    Boundedness of nonnegative measures will be proven using the sufficient combination of compact support and finite mass.
Assumption A1 ensures that $L$ is compact with finite $R$. The set $\Omega_Z = S^{n-1}\times [-R, R] \times \mathbb{V}_{n-1}(\R^n)$ is compact, as is the set $\Psi$. The set $\Psi_L$ is contained within the compact set $\Psi$, since  $\Psi_L$ is the preimage of the compact set $L$ under the continuous mapping $\theta t + Z y$.

The mass of $\mu_0$ is set to 1 by  \eqref{eq:slice_upper_meas_prob}. 
\begin{align}
    \inp{1}{\mu} + \inp{1}{\hat{\mu}} = \inp{1}{\mu_0 \otimes \lambda_R^{n-1}} = (1)(\textrm{Vol}_{n-1}{B_R^{n-1}}) = \textrm{Vol}_{n-1}{B_R^{n-1}} < \infty.
\end{align}

Because $\mu$ and $\hat{\mu}$ are each nonnegative measures, their masses are likewise nonnegative numbers. Their masses therefore satisfy $\inp{1}{\mu}, \inp{1}{\hat{\mu}} \in [0, \textrm{Vol}_{n-1}{B_R^{n-1}}].$
    
\end{proof}

\begin{thm}
\label{thm:strong_duality_indicator}
    Strong duality holds between programs \eqref{eq:slice_upper} and \eqref{eq:slice_upper_meas} under A1.
\end{thm}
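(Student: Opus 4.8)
The plan is to derive Theorem~\ref{thm:strong_duality_indicator} from weak duality between \eqref{eq:slice_upper} and \eqref{eq:slice_upper_meas} combined with the two one-sided bounds already established, Theorems~\ref{thm:slice_no_relaxation} and \ref{thm:slice_meas_upper_bound}. First I would verify weak duality, $p^* \ge m^*$: for any primal-feasible pair $(\gamma, w)$ of \eqref{eq:slice_upper} and any dual-feasible triple $(\mu_0, \mu, \hat{\mu})$ of \eqref{eq:slice_upper_meas}, integrating the pointwise bound \eqref{eq:slice_upper_int} against the probability measure $\mu_0$ (using $\inp{1}{\mu_0} = 1$ from \eqref{eq:slice_upper_meas_prob}) gives
\begin{align*}
\gamma = \gamma\inp{1}{\mu_0} &\ge \inp{\Lambda_R w}{\mu_0} = \inp{w}{\mu_0 \otimes \lambda_R^{n-1}} \\
&= \inp{w}{\mu} + \inp{w}{\hat{\mu}} \ge \inp{1}{\mu},
\end{align*}
where the second equality is Fubini (valid as $w$ is bounded on the compact set $\Psi$ and $\mu_0$ is finite), the third is the marginal constraint \eqref{eq:volume_complement}, and the final bound uses \eqref{eq:slice_upper_psi} on $\supp{\mu}\subseteq\Psi_L$ and \eqref{eq:slice_upper_psi_comp} on $\supp{\hat{\mu}}\subseteq\Psi$. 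Minimizing over primal-feasible $(\gamma,w)$ and maximizing over dual-feasible $(\mu_0,\mu,\hat{\mu})$ gives $p^* \ge m^*$. Then, since $p^* = P^*$ (Theorem~\ref{thm:slice_no_relaxation}) and $m^* \ge P^*$ (Theorem~\ref{thm:slice_meas_upper_bound}), the chain $P^* = p^* \ge m^* \ge P^*$ collapses, so $p^* = m^*$, which is the no-duality-gap assertion.

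To complete the strong-duality claim I would also show that the measure program \eqref{eq:slice_upper_meas} attains its supremum. Lemma~\ref{lem:meas_bounded} confines the feasible $(\mu_0,\mu,\hat{\mu})$ to a fixed product of norm-bounded sets of nonnegative measures on compact sets, which is weak-$\ast$ compact (and metrizable) by Banach--Alaoglu. The feasibility constraints are weak-$\ast$ closed: \eqref{eq:slice_upper_meas_prob} and the support restrictions obviously so, and for \eqref{eq:volume_complement} one checks that $\mu_0 \mapsto \mu_0 \otimes \lambda_R^{n-1}$ is weak-$\ast$ continuous, since $\inp{f}{\mu_0 \otimes \lambda_R^{n-1}} = \inp{\Lambda_R f}{\mu_0}$ with $\Lambda_R f \in C(\Omega_Z)$ for every $f \in C(\Psi)$. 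Since $\mu\mapsto\inp{1}{\mu}$ is weak-$\ast$ continuous, a maximizer exists on the weak-$\ast$ compact feasible set.

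I do not expect a deep obstacle along this route; the only points needing care are the Fubini identification in the weak-duality chain and the weak-$\ast$ closedness of the product-measure constraint for attainment. The more demanding alternative---a proof that avoids the delicate almost-uniform-convergence argument behind Theorem~\ref{thm:slice_no_relaxation}---would instead invoke a general no-duality-gap theorem for conic/infinite-dimensional linear programs of the type used in the moment--\ac{SOS} volume literature, whose technical crux is verifying the closedness (or constraint-qualification) hypothesis on the relevant moment cone.
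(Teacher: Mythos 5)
Your proposal is correct, but it takes a genuinely different route from the paper. The paper does not argue via the sandwich $P^* = p^* \geq m^* \geq P^*$: instead (Appendix \ref{app:duality}) it casts \eqref{eq:slice_upper} and \eqref{eq:slice_upper_meas} as a standard-form conic primal--dual pair over $(\mathcal{X},\mathcal{X}')$ and $(\mathcal{Y},\mathcal{Y}')$ and invokes the abstract no-gap theorem (Theorem 2.6 of \cite{tacchi2021thesis}), verifying its three hypotheses: boundedness of all feasible measure tuples (Lemma \ref{lem:meas_bounded}), existence of a feasible $\bbmu$ (the construction in Theorem \ref{thm:slice_upper_meas}), and weak-$*$ continuity of the problem data, in particular of $w \mapsto \Lambda_R w$. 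Your route instead proves weak duality by hand (the integration chain through \eqref{eq:slice_upper_int}, Fubini, \eqref{eq:volume_complement}, and the pointwise bounds \eqref{eq:slice_upper_psi}--\eqref{eq:slice_upper_psi_comp} is correct) and then closes the gap by combining Theorem \ref{thm:slice_no_relaxation} with Theorem \ref{thm:slice_meas_upper_bound}. The trade-off is dependency structure: the paper's argument is independent of Theorem \ref{thm:slice_no_relaxation} and so does not inherit the delicacy of its almost-uniform-convergence proof, whereas your argument leans on that theorem essentially; conversely, your weak-duality computation and Banach--Alaoglu attainment argument are fully explicit and avoid appealing to the external abstract duality machinery. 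Both are valid proofs of the stated result; your closing remark correctly identifies the paper's actual strategy as the ``alternative'' you describe.
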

\begin{proof}
    See Appendix \ref{app:duality}.
\end{proof}

\section{Slice-Volume Semidefinite Programs}
\label{sec:slice_sos}

This section will apply the moment-\ac{SOS} hierarchy of \ac{SDP} towards solution to \eqref{eq:slice_upper}.

\subsection{Slice-Volume SOS Program}

We will restrict $w$ from \eqref{eq:slice_upper_w} to be a polynomial $w \in \R[\theta, t, Z, y]_{\leq 2k}$ for some degree $k \in \N$. The following proposition is required in order to evaluate the integral from \eqref{eq:slice_upper_int}:
\begin{prop}[Theorem 3.1 of \cite{lasserre2001solving}]
The moments of the Lebesgue distribution for the radius-$R$ ball in $n$ dimensions are
\begin{align}
    \forall \alpha \in \N^n: \qquad \int_{B^n} y^\alpha d y = R^{n + \abs{\alpha}} \frac{\prod_i \Gamma((\alpha_i + 1)/2)}{\Gamma(1 + (n+\abs{\alpha})/2)}.
\end{align}
\end{prop}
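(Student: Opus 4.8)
The plan is to derive this classical moment identity from scratch (it is Theorem~3.1 of \cite{lasserre2001solving}). First I would reduce to the unit ball: the substitution $y = R z$ gives $dy = R^{n}\,dz$ and $y^\alpha = R^{\abs{\alpha}}z^\alpha$, so $\int_{B^n_R} y^\alpha\,dy = R^{n+\abs{\alpha}}\int_{B^n} z^\alpha\,dz$, and it remains to prove the formula for $R=1$. Next I would dispose of the parity case: if some $\alpha_i$ is odd, the coordinate reflection $z \mapsto (z_1,\dots,-z_i,\dots,z_n)$ is a measure-preserving involution of $B^n$ under which $z^\alpha$ changes sign, so $\int_{B^n} z^\alpha\,dz = 0$; thus one may assume every $\alpha_i$ is even, and then $z^\alpha = \prod_i \abs{z_i}^{\alpha_i}$, which is also consistent with how both sides of the stated identity should be read on odd multi-indices.

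The core step evaluates the angular integral over $S^{n-1}$ by a Gaussian trick. On one hand, Fubini together with the one-dimensional identity $\int_{\R}\abs{u}^{a}e^{-u^2}\,du = \Gamma((a+1)/2)$ gives
\begin{align}
    \int_{\R^n}\Big(\prod_i \abs{y_i}^{\alpha_i}\Big)e^{-\norm{y}_2^2}\,dy = \prod_i \Gamma((\alpha_i+1)/2).
\end{align}
On the other hand, passing to polar coordinates $y = r\omega$ with $r = \norm{y}_2$, $\omega \in S^{n-1}$, $dy = r^{n-1}\,dr\,d\sigma(\omega)$, and using $\prod_i\abs{y_i}^{\alpha_i} = r^{\abs{\alpha}}\prod_i\abs{\omega_i}^{\alpha_i}$, the same integral factors as $\big(\int_0^\infty r^{n+\abs{\alpha}-1}e^{-r^2}\,dr\big)\big(\int_{S^{n-1}}\prod_i\abs{\omega_i}^{\alpha_i}\,d\sigma(\omega)\big)$, whose radial factor is $\tfrac12\Gamma((n+\abs{\alpha})/2)$. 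Equating the two evaluations yields $\int_{S^{n-1}}\prod_i\abs{\omega_i}^{\alpha_i}\,d\sigma(\omega) = 2\prod_i\Gamma((\alpha_i+1)/2)/\Gamma((n+\abs{\alpha})/2)$.

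Finally I would integrate radially over the unit ball in the same coordinates:
\begin{align}
    \int_{B^n} z^\alpha\,dz = \Big(\int_0^1 r^{n+\abs{\alpha}-1}\,dr\Big)\Big(\int_{S^{n-1}}\prod_i\abs{\omega_i}^{\alpha_i}\,d\sigma(\omega)\Big) = \frac{1}{n+\abs{\alpha}}\cdot\frac{2\prod_i\Gamma((\alpha_i+1)/2)}{\Gamma((n+\abs{\alpha})/2)},
\end{align}
and the prefactor collapses via $\Gamma(s+1)=s\Gamma(s)$ with $s = (n+\abs{\alpha})/2$ to $1/\Gamma(1+(n+\abs{\alpha})/2)$; reinstating the $R^{n+\abs{\alpha}}$ factor from the first reduction gives exactly the claimed formula. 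I expect the only real hazard to be bookkeeping — keeping the half-integer $\Gamma$-shifts straight and applying the parity convention consistently so the two sides agree on odd $\alpha$; there is no substantive difficulty. A fully self-contained alternative that avoids the sphere is induction on $n$, peeling off the innermost integral $\int_{-\sqrt{1-\norm{z'}_2^2}}^{\sqrt{1-\norm{z'}_2^2}} z_n^{\alpha_n}\,dz_n$ and invoking a Beta-function identity, but the Gaussian route above is shorter.
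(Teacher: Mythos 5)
Your derivation is correct. Note, however, that the paper does not prove this proposition at all: it is imported verbatim as Theorem~3.1 of the cited reference \cite{lasserre2001solving}, so there is no in-paper argument to compare against. What you supply is the standard self-contained derivation (scale to the unit ball, kill odd multi-indices by reflection symmetry, evaluate the spherical integral $\int_{S^{n-1}}\prod_i\abs{\omega_i}^{\alpha_i}\,d\sigma$ by comparing the Fubini and polar-coordinate evaluations of a Gaussian integral, then integrate radially and absorb the factor $2/(n+\abs{\alpha})$ into $\Gamma(1+(n+\abs{\alpha})/2)$), and every step checks out: the one-dimensional identity $\int_\R\abs{u}^a e^{-u^2}\,du=\Gamma((a+1)/2)$, the radial factor $\tfrac12\Gamma((n+\abs{\alpha})/2)$, and the final simplification are all right. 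The one point worth making explicit is that the displayed formula, read literally, is false for multi-indices with an odd component (the left side vanishes while the right side does not, e.g.\ $\Gamma(1)=1$ for $\alpha_i=1$); your parity reduction shows the identity holds exactly when every $\alpha_i$ is even, and the formula should be stated with that caveat or with $y^\alpha$ replaced by $\prod_i\abs{y_i}^{\alpha_i}$ — you flag this, which is the correct reading of the source.
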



We will now be assuming that $L$ has \iac{BSA} structure:
\begin{itemize}
    \item[A2] The set $L$ is  \iac{BSA} set that includes a ball constraint description $R^2 -\norm{x}_2^2 \geq 0$.
\end{itemize}

\begin{prob}
    The degree-$k$ \ac{SOS} truncation of \eqref{eq:slice_upper} is:
\begin{subequations}
\label{eq:slice_upper_sos}
\begin{align}
    p^*_k = &\inf_{\gamma \in \R} \quad \gamma \\
    & \gamma - \textstyle \Lambda_R w(\theta, t, Z)  \in \Sigma[\Omega_Z]_{\leq 2k}\label{eq:slice_upper_sos_int}\\
    & w(\theta, t, Z, y) - 1  \in \Sigma[\Psi_L]_{\leq 2k} \label{eq:slice_upper_sos_psi}\\
    & w(\theta, t, Z, y) \in \Sigma[\Psi]_{\leq 2k} \label{eq:slice_upper_sos_psi_comp} \\
    & w(\theta, t, Z, y) \in \R[\theta, t, Z, y]_{\leq 2k}\label{eq:slice_upper_sos_w}.
\end{align}
\end{subequations}    
\end{prob}

\begin{thm}
\label{thm:sos_indicator_slice}
    Program \eqref{eq:slice_upper_sos}  provides a decreasing sequence of upper bounds $p_k^* \geq p^*_{k+1} \geq \ldots$ that converges as $\lim_{k \rightarrow \infty} p_k^* = P^*$ under assumptions A1 and A2.
\end{thm}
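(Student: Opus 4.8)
The plan is to establish the two inequalities $\limsup_{k\to\infty} p_k^* \leq P^*$ and $p_k^* \geq P^*$ separately, together with monotonicity.

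First I would verify monotonicity and the uniform lower bound. The degree-$(k{+}1)$ truncation \eqref{eq:slice_upper_sos} is a relaxation of the degree-$k$ one, since $\Sigma[\cdot]_{\leq 2k} \subseteq \Sigma[\cdot]_{\leq 2(k+1)}$ and $\R[\cdot]_{\leq 2k}\subseteq\R[\cdot]_{\leq 2(k+1)}$; any feasible triple $(\gamma, w)$ at level $k$ is feasible at level $k{+}1$, so $p_{k+1}^* \leq p_k^*$. For the lower bound $p_k^* \geq P^*$: any $w$ feasible for the SOS constraints \eqref{eq:slice_upper_sos_psi}--\eqref{eq:slice_upper_sos_psi_comp} is in particular pointwise feasible for \eqref{eq:slice_upper_psi}--\eqref{eq:slice_upper_psi_comp} (a WSOS certificate of nonnegativity over a set implies genuine nonnegativity over that set), and a $\gamma$ satisfying \eqref{eq:slice_upper_sos_int} satisfies \eqref{eq:slice_upper_int}; hence every SOS-feasible point is feasible for the continuous LP \eqref{eq:slice_upper}, giving $p_k^* \geq p^*$. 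By Theorem \ref{thm:slice_no_relaxation}, $p^* = P^*$, so $p_k^* \geq P^*$ and the sequence $\{p_k^*\}$ is nonincreasing and bounded below, hence convergent; it remains only to identify the limit.

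The core of the argument is the reverse inequality $\lim_{k\to\infty} p_k^* \leq P^* = p^*$. By Theorem \ref{thm:slice_no_relaxation} (and its proof via almost-uniform convergence) together with Appendix \ref{app:poly_approx}, which the excerpt promises proves that polynomial auxiliary functions suffice, for any $\varepsilon > 0$ there is a polynomial $w_\varepsilon \in \R[\theta,t,Z,y]$ that is \emph{strictly} feasible for the continuous constraints — that is, $w_\varepsilon(\theta,t,Z,y) \geq 1 + \delta$ on $\Psi_L$ and $w_\varepsilon \geq \delta$ on $\Psi$ for some $\delta>0$, with $\gamma_\varepsilon := \max_{\Omega_Z}\Lambda_R w_\varepsilon \leq P^* + \varepsilon$ and $\Lambda_R w_\varepsilon$ continuous on $\Omega_Z$ (the integral of a polynomial against the ball Lebesgue moments, using the stated Proposition, is a polynomial in $(\theta,t,Z)$). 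One then invokes Putinar's Positivstellensatz: since $\Psi_L$, $\Psi$, and $\Omega_Z$ are compact and — under A2 and the constructions in \eqref{eq:supp_sets} — admit ball-constraint (Archimedean) descriptions, the strictly positive polynomials $w_\varepsilon - 1$ on $\Psi_L$, $w_\varepsilon$ on $\Psi$, and $\gamma_\varepsilon - \Lambda_R w_\varepsilon$ on $\Omega_Z$ each lie in the corresponding WSOS cone $\Sigma[\cdot]$, hence in $\Sigma[\cdot]_{\leq 2k}$ for all $k$ large enough. Thus $(\gamma_\varepsilon, w_\varepsilon)$ is feasible for \eqref{eq:slice_upper_sos} at some finite level $k_\varepsilon$, giving $p_{k_\varepsilon}^* \leq \gamma_\varepsilon \leq P^* + \varepsilon$, and by monotonicity $\lim_k p_k^* \leq P^* + \varepsilon$. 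Letting $\varepsilon \to 0$ closes the gap.

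The main obstacle I anticipate is the Archimedean/compactness bookkeeping needed to apply Putinar's Positivstellensatz on each of the three sets simultaneously: $\Omega_Z$ and $\Psi$ involve the orthogonality constraints $[\theta,Z]\in O(n)$ (equality constraints, handled by including both $\pm$ inequalities) and the Stiefel structure, and one must check that adding the redundant ball constraint $R^2 - \norm{x}^2$ to the description of $L$ (as noted right after A1, and formalized in A2) indeed propagates to give a ball constraint on $\Psi_L$ in the $(\theta,t,Z,y)$ variables — e.g. via $\norm{\theta}^2 + t^2 + \norm{Z}_F^2 + \norm{y}^2 = 1 + t^2 + (n-1) + \norm{y}^2 \leq \text{const}$ on the relevant domain. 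A secondary subtlety is ensuring the strictly-feasible polynomial approximant promised by Appendix \ref{app:poly_approx} can simultaneously be taken with $\gamma_\varepsilon$ close to $P^*$; this follows by taking $w_\varepsilon$ slightly above the $L^1$-approximant $w^k$ from the proof of Theorem \ref{thm:slice_no_relaxation} (adding a small constant to gain the strict margin $\delta$), at the cost of only an $O(\delta \cdot \mathrm{Vol}_{n-1}(B_R^{n-1}))$ increase in the objective, which is absorbed into $\varepsilon$.
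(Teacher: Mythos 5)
Your proposal is correct and follows essentially the same route as the paper: the paper's Appendix B proves exactly your two anticipated "obstacles" as Lemmas (a Stone--Weierstrass construction of a strictly feasible polynomial with objective within $\epsilon$ of $P^*$, and a verification that $\Omega_Z$, $\Psi$, $\Psi_L$ carry ball constraints via $\norm{\theta}_2^2+\sum_{ij}Z_{ij}^2=n$), and then concludes by Putinar's Positivstellensatz just as you do. The monotonicity and lower-bound steps you spell out are treated as immediate in the paper but are the same standard argument.
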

\begin{proof}
See Appendix \ref{app:poly_approx}.
\end{proof}

\begin{rem}

The \ac{WSOS} Constraint \eqref{eq:slice_upper_sos_int} 
has a polynomial $\sigma_0(\theta, t, Z) \in \Sigma[\theta, t, Z]_{\leq 2k}$ in representation \eqref{eq:psatz_noeps}. The Gram matrix of $\sigma_0$ is a \ac{PSD} variable in the \ac{SDP} formulation of problem \eqref{eq:slice_upper_sos}. The dual \ac{PSD} variable to this Gram matrix  is associated with the moment matrix of $\mu_0$ \cite{lasserre2009moments}. If the output of \iac{SDP} solver for \eqref{eq:slice_upper_sos} yields a low-rank dual variable (candidate moment matrix for $\mu_0$), then the matrix-factorization-based methods of \cite{henrion2005detecting2} can be used to attempt extraction of (an orbit of) candidates $(\theta, t, Z)$ to solve \eqref{eq:slice_upper_meas}.

\end{rem}

\subsection{Extension to Semialgebraic Sets}

    Assume that $L$ may be expressed as the union of $N_c$ \ac{BSA} sets $L = \cup_{j=1}^{N_c} L_j$, in which each $L_j$ possesses $(R^2 - \norm{x}_2^2 \geq 0)$ in its \ac{BSA} representation. 
    Constraint \eqref{eq:slice_upper_sos_int} can be generalized to $\forall j \in 1..N_c: \   w(\theta, t, y) -1 \in \Sigma[\Psi_{L_j}]_{\leq 2k}$ in the union case, while retaining convergence as $\lim_{k \rightarrow \infty} p_k^* = P^*.$

    In this manner, lower-bounds for the minimum slice-volume of $L$ can be found by maximizing the slice-volume over the complement of $L$ (which can be represented as a union of \ac{BSA} sets).

    Slice-volumes can also be computed for semialgebraic sets described by given projections. Let $L \subset X$ and $\hat{L} \subset X \times E$ be sets such that $L = \pi^x \hat{L}$ and $\hat{L}$ is \ac{BSA}. This implies that $L$ is semialgebraic (projection of \iac{BSA} set). Constraint \eqref{eq:slice_upper_sos_psi} can then be implemented as \iac{WSOS} constraint over the following \ac{BSA} set:
\begin{align}
        \Psi_{\hat{L}} &= \{(\theta, t, Z, y, e) \in  \Psi \times E\mid  \exists e: (\theta t + Z y) \in \hat{L}, \ [\theta, \, Z] \in O(n)\}. \label{eq:supp_psi_hat}
        \end{align}

\subsection{Computational Complexity}
\label{sec:complexity}

This subsection will quantify the computational complexity of solving \ac{SDP} formulations of \eqref{eq:slice_upper_sos} via  interior point methods. The per-iteration scaling complexity of solving \iac{SDP} arising from the moment-\ac{SOS} hierarchy in $N$ variables at degree $k$ (with a single \ac{PSD} block) is $O(N^{6k})$ and $O(k^{4N})$ \cite{lasserre2009moments, miller2022eiv_short}.

The computational complexity of degree-$d$ SOS tightenings of programs \eqref{eq:slice_upper} is dominated by the \ac{PSD} matrix constraints present in \eqref{eq:slice_upper_sos_psi} and \eqref{eq:slice_upper_sos_psi_comp}. These constraints involve the $(n)+(1)+(n-1)+n(n-1) = n^2 + n$ variables $(\theta, t, Z, y)$. The maximal-size \ac{PSD} (Gram) matrices will have size $\binom{n^2+n+k}{k}$, which quickly grows intractable as $n$ increases. In comparison, constraints \eqref{eq:slice_upper_sos_int} have maximal \ac{PSD} sizes of $\binom{n+1+k}{k}$ and $\binom{2n+k}{k}$, respectively.

\section{Reduction of Computational Complexity}

\label{sec:reduce_complex}

These subsequent sections will detail how algebraic and symmetric structures inside the support sets \eqref{eq:supp_sets} can be used to reduce the size of the Gram matrices in Section \ref{sec:complexity}.

\subsection{Dimension-Independent}

\label{sec:structure_dim_indep}
The slice volume programs offer several forms of extant structure. 

\subsubsection{Discrete Symmetry}
\label{sec:symmetry_discrete}
To begin with, the Radon transform of a function is symmetric under the exchange $(\theta, t) \leftrightarrow (-\theta, -t)$ (in domain $\Omega$). As a result, the functions $(w, \tilde{w})$ may be chosen to be even under the exchange $(\theta, t, Z, y) \leftrightarrow (-\theta, -t, -Z, -y)$ (in domain $\Psi$). This is an instance of a sign-symmetry, which can be exploited through the \ac{SOS} methods developed in  \cite{gatermann2004symmetry, lofberg2009pre}. Symmetries perform a block-diagonalization of the Gram matrices, yielding a sequence of smaller \ac{PSD} constraints while retaining the same degree-$k$ optimal value.

\subsubsection{Continuous Symmetry}
\label{sec:symmetry_continuous}
The set $\Psi_L$ admits an $O(n-1)$ symmetry. For any $Q \in O(n-1)$ and $(\theta, t, Z, y) \in \Psi_L$, it holds that $(\theta, t, Z Q^T , Qy) \in \Psi_L$ (because $\theta t + Z y = \theta t + (Z Q^T) (Q y)$). The function $w(\theta, t, Z, y)$ can be chosen to respect this orthogonal symmetry without changing the objective value of \eqref{eq:slice_upper}, because the Radon transform function $\mathscr{R} I_L(\theta, t)$ used in the proof of lemma \ref{lem:upper_bound} is $Z$-independent.

Further symmetries can be exploited if $L$ also possesses geometric symmetries (e.g., dihedral).





\subsubsection{Quotient Reduction}

The impositions of $\norm{\theta}_2^2 = 1$ and $[\theta, Z]$ in the descriptions of $\Omega$ and $\Psi_L$ yield a sequence of degree-2 polynomial equality constraints. Gr\"{o}bner basis reduction methods \cite{cox2013ideals} can be used to further reduce the size of the Gram matrices \cite{parrilo2003exploiting, parrilo2005exploiting}. As an example, the replacement rule of  $\theta_n^2 \mapsto 1-\sum_{i=1}^{n-1} \theta_i^2$ would be evaluated anywhere $\theta_n^2$ appears in a \ac{WSOS} polynomial expression. It is therefore not necessary for the polynomial $v$ or any \ac{WSOS} multiplier to contain a monomial with a degree-$\geq 2$ term in $\theta_n$. 



\subsection{Dimension 2, 4, 8}
\label{sec:dim_248}
A manifold $\mathcal{M}$ is \textit{parallelizable} if there exists a continuous map between  $\mathcal{M}$ and the space of coordinate frames on $\mathcal{M}$ (the tangent bundle of $\mathcal{M}$ is trivial).

The sphere $S^{n'}$ is parallelizable only in dimensions $n'=\{1, 3, 7\}$ by Corollary 2 of \cite{bott1958spheres}. These choices correspond to the division rings defined at $n=n'+1 = \{2, 4, 8\}$ of complex numbers, quaternions, and octonions. The basis for the tangent space may be constructed by forming multiplication tables of these division rings \cite{bott1958spheres}.

In dimension $n=2$ with $\theta \in S^1$ corresponding to complex numbers $\theta_1 + i \theta_2$, the frame $Z$ satisfies
\begin{subequations}
\label{eq:Z_division}
\begin{align}
    [\theta, Z] &= \begin{bmatrix}
        \theta_1 & -\theta_2 \\
        \theta_2 & \theta_1
    \end{bmatrix}. \label{eq:Z_division2}
\intertext{The process can be repeated for quaternions $\theta = \theta_1 + \theta_2 i + \theta_3 j + \theta_4 k \in S^3$ as}
        [\theta, Z] &= \begin{bmatrix}
        \theta_1 & -\theta_2 & -\theta_3 & \theta_4\\
        \theta_2 & \theta_1 & -\theta_4 & -\theta_3\\
        \theta_3 & -\theta_4 & \theta_1 & -\theta_2\\
        \theta_4 & \theta_3 & \theta_2 & \theta_1
    \end{bmatrix},\label{eq:Z_division4}
    \intertext{and similarly for octonions with }
        [\theta, Z] &= \begin{bmatrix}
               \theta_1 & -\theta_2 & -\theta_3 & \theta_4 & -\theta_5 & \theta_6 & \theta_7 & -\theta_8\\
        \theta_2 & \theta_1 & -\theta_4 & -\theta_3 & -\theta_6 & -\theta_5 & \theta_8 & \theta_7\\
        \theta_3 & -\theta_4 & \theta_1 & -\theta_2 & -\theta_7 & \theta_8 & -\theta_5 & \theta_6\\
        \theta_4 & \theta_3 & \theta_2 & \theta_1 & -\theta_8 & -\theta_7 & -\theta_6 & -\theta_5 \\
        \theta_5 & -\theta_6 & -\theta_7 & \theta_8 & \theta_1 & -\theta_2 & -\theta_3 & \theta_4 \\
        \theta_6 & \theta_5 & -\theta_8 & -\theta_7 & \theta_2 & \theta_1 & -\theta_4 & -\theta_3\\
        \theta_7 & -\theta_8 & \theta_5 & -\theta_6 
		& \theta_3 & -\theta_4 & \theta_1 & -\theta_2\\
        \theta_8 & \theta_7 & \theta_6 & \theta_5 & \theta_4 & \theta_3 & \theta_2 & \theta_1
    \end{bmatrix}. \label{eq:Z_division8}
\end{align}
\end{subequations}

Substitutions in \eqref{eq:Z_division} form a (nonunique) explicit representation of $Z$ in terms of $\theta$. 
As a result, constraints such as in \eqref{eq:slice_upper_psi} will only contain the $2n$  variables $(\theta, t, y)$ (down from $n^2 + n$). The maximal-size \ac{PSD} matrix of \eqref{eq:slice_upper_sos_psi} is reduced from $\binom{n^2+n+k}{k}$ to $\binom{2n+k}{k}$ by virtue of eliminating $Z$ when $n \in \{2, 4, 8\}$.

The specific selection of a $Z$ matrix in terms of $\theta$ will destroy the continuous $O(n-1)$ symmetry from Section \ref{sec:symmetry_continuous}. Breaking the $O(n-1)$ symmetry is acceptable, due to the elimination of $Z$ in the count of the number of variables.

\subsection{Dimension 3}

\label{sec:dim3}
The sphere $S^2$ (embedded in 3-dimensional Euclidean space) is not parallelizable \cite{bott1958spheres}, and therefore $Z$ cannot be assigned to $\theta$ in a continuous manner. Such a topological obstruction prevents the definition of a polynomial-valued map $\theta \rightarrow Z$ from \eqref{eq:Z_division} because polynomial structure is stronger than continuity.


Given that a polynomial (continuous) parameterization of $Z$ in terms of $\theta$ alone does not exist in $n=3$, we instead parameterize $Z$ in terms of $\theta \in S^2$ and a new variable $b \in S^2$
using the existence of the three-dimensional cross-product $\times$ operator. For any $b \in S^2$ with $\theta \cdot b = 0$, a  three-dimensional orthogonal coordinate frame can be created as
\begin{align}
\begin{bmatrix}
    \theta & Z
\end{bmatrix} = \begin{bmatrix}
    \theta & b & \theta \times b 
\end{bmatrix}. \label{eq:frame_3}
\end{align}

The frame \eqref{eq:frame_3} can be used to construct the following set:
\begin{align}
        \Psi_L^3 &= \{(\theta, t, b, y) \in  \Omega\times S^2\times  B_R^{2}  \mid (\theta t + b y_1 + (\theta \times b) y_2) \in L, \ \theta \cdot b = 0\}. \label{eq:supp_psi_3}
\end{align}

The set $\Psi_L^3$ from \eqref{eq:supp_psi_3} has 9 variables $(\theta, t, y, b)$, which is fewer than the $3^2 + 3 = 12$ variables needed to represent $\Psi_L$ from \eqref{eq:supp_psi} in terms of $(\theta, t, Z, y)$.  The symmetric and algebraic structures from Section \ref{sec:structure_dim_indep} are retained in the 3-dimensional case. We note that a possible Gr\"{o}bner basis of $\{\norm{\theta}_2^2 = 1, \norm{b}_2^2 = 1, \theta \cdot b =0\}$ in the graded  lexicographical ordering is:
\begin{align}
 &\norm{\theta}_2^2 - 1,   & & \theta_1 (b_2^2 + b_3^2 - 1) - b_1(\theta_2 b_2 - \theta_3 b_1), \nonumber \\
 & \norm{b}_2^2 - 1, & & \theta_1(\theta_2 b_2 + \theta_3 b_3) + b_1(-\theta_2^2 - \theta_3^2 + 1), \label{eq:grobner_3}\\
 &\theta \cdot b, & & \theta_1(\theta_2 b_3^2 -\theta_3 b_2 b_3 - \theta_2) + b_1(b_2 - \theta_2 \theta_3 b_3 + \theta_3 b_2), \nonumber\\
 & & & \theta_2(\theta_2 b_3^2 - 2 \theta_3 b_2 b_3 - \theta_2) + \theta_3^2(b_2^2 - 1) -b_2^2 - b_3^2 + 1. \nonumber
\end{align}


The set description $\Psi_L^3$ possesses an $SO(2)$ symmetry, which is a subgroup of the full $O(2)$ symmetry from Section \ref{sec:symmetry_continuous}. 

\begin{lem}
    The three-dimensional set $\Psi_L^3$ is $SO(2) \times \Z_2$-invariant. 
\end{lem}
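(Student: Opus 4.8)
The plan is to exhibit two explicit group actions on the coordinates $(\theta, t, b, y)$ that preserve the defining set $\Psi_L^3$ in \eqref{eq:supp_psi_3}, one giving an $SO(2)$ factor and one giving a $\Z_2$ factor, and check that together they generate an action of $SO(2) \times \Z_2$ leaving the membership condition $(\theta t + b y_1 + (\theta \times b) y_2) \in L$ and the orthogonality constraints invariant. The key observation is that once a direction $\theta$ and an auxiliary vector $b \in S^2$ with $\theta \cdot b = 0$ are fixed, the pair $(b, \theta \times b)$ is an orthonormal basis of the plane $\theta^\perp$, and rotating that basis within $\theta^\perp$ while correspondingly rotating $y \in B_R^2$ leaves the point $\theta t + b y_1 + (\theta \times b) y_2$ unchanged — this is exactly the restriction of the $O(n-1)$ symmetry of Section \ref{sec:symmetry_continuous} to $n=3$, but now realized through $b$ rather than through a full frame $Z$.

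First I would define the $SO(2)$ action: for $\varphi \in [0, 2\pi)$ let $R_\varphi \in SO(2)$ be the planar rotation matrix, and set the action on $(b, y)$ by $b \mapsto \cos(\varphi)\, b + \sin(\varphi)\,(\theta \times b)$ and $y \mapsto R_\varphi^{-1} y$ (with $\theta, t$ fixed). I would verify: (i) the new $b$ still lies in $S^2$ and still satisfies $\theta \cdot b = 0$, since $\{b, \theta\times b\}$ is orthonormal in $\theta^\perp$; (ii) the new $\theta \times b$ equals $\cos(\varphi)(\theta\times b) - \sin(\varphi) b$, using bilinearity of the cross product and $\theta \times (\theta \times b) = -b$ (valid because $\theta\cdot b = 0$ and $\|\theta\|=1$); (iii) substituting into $\theta t + b y_1 + (\theta\times b) y_2$ and collecting coefficients of $b$ and $\theta \times b$ shows the point is literally unchanged, hence membership in $L$ and in $B_R^2$ (since $\|R_\varphi^{-1}y\| = \|y\|$) is preserved. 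That establishes $SO(2)$-invariance. Then I would define the $\Z_2$ action generated by $(\theta, t, b, y_1, y_2) \mapsto (\theta, t, b, y_1, -y_2)$, or equivalently (to make it act genuinely on the frame) $b \mapsto b$, $\theta\times b \mapsto -(\theta\times b)$ realized by $y_2 \mapsto -y_2$; again the point $\theta t + b y_1 + (\theta\times b)y_2$ is preserved, the ball and orthogonality constraints are trivially preserved, so this is a valid $\Z_2$ symmetry. (Alternatively, the $\Z_2$ can be taken as the sign symmetry $(\theta,t,b,y)\mapsto(-\theta,-t,-b,-y)$ from Section \ref{sec:symmetry_discrete}, which also fixes the point and the constraints; I would pick whichever is used later in the paper.)

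Finally I would check the two actions commute — the $SO(2)$ rotation acts on $(b, \theta\times b)$ and on $y$ through $R_\varphi^{-1}$, the $\Z_2$ flips the sign of $y_2$, and conjugating a planar rotation by the diagonal reflection $\mathrm{diag}(1,-1)$ gives the inverse rotation, so actually the group generated is the full $O(2)$ if one is careless; the precise claim $SO(2)\times\Z_2$ (rather than $O(2)$) holds because the $\Z_2$ chosen (say the global sign flip, or $y_2\mapsto -y_2$ combined with $b\mapsto -b$ so that $\theta\times b$ is also negated and the net effect on the frame is central) commutes with every $R_\varphi$ acting within the plane. I would state the generators explicitly, note that $SO(2)$ and $\Z_2$ intersect trivially, and conclude $\Psi_L^3$ is invariant under their direct product. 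The main obstacle — and the only place real care is needed — is pinning down exactly which $\Z_2$ is meant so that the product is genuinely $SO(2)\times\Z_2$ and not the semidirect product $O(2)$; I would resolve this by taking the $\Z_2$ to be central (the global sign symmetry, which commutes with the rotation of the in-plane frame), so that the factorization is a direct product as claimed.
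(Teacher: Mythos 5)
Your $SO(2)$ argument is sound and is essentially the paper's: the paper acts on the frame by $Z' = [b,\ \theta\times b]\,Q^T$ and on the local coordinate by $y' = Qy$, and the decisive computation is that the second column of $Z'$ equals $\theta\times b'$ precisely when $\det(Q)=1$; your verification via bilinearity and $\theta\times(\theta\times b) = -b$ is the same calculation written in rotation-angle coordinates, and it correctly explains why only the special orthogonal group survives.

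The $\Z_2$ half of your proposal has a genuine gap. Your primary candidate, $(\theta,t,b,y_1,y_2)\mapsto(\theta,t,b,y_1,-y_2)$, is \emph{not} a symmetry of $\Psi_L^3$: it sends $x=\theta t + b y_1 + (\theta\times b)y_2$ to $\theta t + b y_1 - (\theta\times b)y_2$, a genuinely different point of $\R^3$, so the condition $x\in L$ is not preserved for a general $L$. Worse, this reflection is exactly the orientation-reversing element whose exclusion is the content of the lemma: adjoining it to your $SO(2)$ rotations would generate the full $O(2)$ action on the slice plane, which the paper's determinant computation rules out. You also cannot ``realize $\theta\times b\mapsto-(\theta\times b)$ by $y_2\mapsto -y_2$,'' because $\theta\times b$ is not an independent variable of $\Psi_L^3$; it is determined by $\theta$ and $b$. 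The paper's $\Z_2$ is the global sign symmetry of Section \ref{sec:symmetry_discrete} (your parenthetical alternative), so the hedge ``I would pick whichever is used later'' leaves the actual $\Z_2$ claim unproven. If you do take that route, verify it rather than defer: since $(-\theta)\times(-b)=\theta\times b$, the literal flip $(\theta,t,b,y)\mapsto(-\theta,-t,-b,-y)$ sends $x$ to $\theta t + by_1-(\theta\times b)y_2$, whereas the lift of $(\theta,t)\mapsto(-\theta,-t)$ that fixes $x$ exactly is $(\theta,t,b,y_1,y_2)\mapsto(-\theta,-t,-b,-y_1,y_2)$; this sign bookkeeping is precisely where the argument can silently fail.
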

\begin{proof}
The $\Z_2$ symmetry is described in Section \eqref{sec:symmetry_discrete} with respect to the mapping $(\theta, t, y, b) \leftrightarrow (-\theta, -t, -y, -b)$. We now focus on the $SO(2)$ symmetry.
    Let $Q$ be an orthogonal matrix to transform in the local $y$-plane as \begin{equation}
        Q = \begin{bmatrix}
    Q_1 & Q_2 \\
    Q_3 & Q_4
\end{bmatrix} \in O(2),
    \end{equation}
and let $(\theta, t, b, y)$ be an arbitrary point in $\Psi_L^3$. Define the quantities $(Z', b', y')$ as
\begin{align}
    Z' &= \begin{bmatrix}
        b& \theta \times b
    \end{bmatrix} Q^T & b' &= Z' \begin{bmatrix}
        1 \\ 0
    \end{bmatrix} & \theta \times b' &=  Z'\begin{bmatrix}
        0 \\ 1
    \end{bmatrix}& y' &= Q y.
\end{align}

It holds that
\begin{subequations}
\begin{align}
\norm{b'}_2^2 &= \norm{(Q_1 b + Q_3 (\theta \times b))}_2^2 = Q_1^2 \norm{b}_2^2 + Q_3^2 \norm{\theta \times b}_2^2 = 1 \\
\theta \cdot b' &= Q_1(\theta \cdot b) + Q_3\theta \cdot (\theta \times b) = 0 \\
(\theta \times b') \cdot (Z' [0; 1]) &= Q_1 Q_2 b\cdot (\theta \times b) + Q_1 Q_4 \norm{\theta \times b}_2^2 - Q_2 Q_3 \norm{b}_2^2 - Q_3 Q_4 b \cdot (\theta \times b) \\
&= Q_1 Q_4 - Q_3 Q_2 = \textrm{det}(Q),
\end{align}
\end{subequations}
which implies that $(\theta \times b') = Z'[0; 1]$ when $\textrm{det}(Q)=1$. The implication $(\theta, t, b, y) \in  \Psi^3_L \implies (\theta, t, b', y') \in  \Psi^3_L$ will be valid when $\textrm{det}(Q) = 1$, which is valid only in the subgroup $SO(2) \subset O(2)$.


    
\end{proof}

The $SO(2)$ restriction arises from the imposition of the unique cross-product vector $\theta \times b$ when forming $Z$. Just like in Section \ref{sec:dim_248}, the loss of symmetry (from $O(2)$ to $SO(2)$) is outweighed by the decrease in computational complexity (12 variables down to 9 variables).







\section{Stokes Constraints}
\label{sec:stokes}

\ac{SOS}-truncations to the slice-volume program (in \eqref{eq:slice_upper_sos}) are vulnerable to slow convergence due to the presence of Gibbs phenomena when $w$ is upper-bounding an indicator function in \eqref{eq:slice_upper_sos_psi}. This section will utlize the Stokes constraint method of \cite{tacchi2023stokes} in order to improve the numerical convergence of \ac{SOS} approximations while keeping the \ac{LP} objective the same.


\subsection{Stokes Constraints Background}

We will begin by briefly reviewing Stokes' Theorem and the Stokes constraint formulation for volume approximation from \cite{lasserre2017computing, tacchi2023stokes}. Section \ref{sec:stokes_symmetry} presents new work in incorporating symmetries into Stokes constraints. Refer to sections 2 and 3 of \cite{lasserre2020boundary} for further information about Stokes constraints for \ac{BSA} sets.

\subsubsection{Stokes' Theorems}

The classical Stokes' theorem is defined with respect to a smooth manifold. We will restrict our presentation to a bounded setting.

\begin{thm}
\label{thm:stokes_std}
    Let $M \subset \R^n$ be a compact smooth manifold ($\partial M$ is $C^1$). Then for all  continuous $(n-1)$-forms $\omega$, it holds that
    \begin{align}
        \int_{\partial M} \omega = \int_{M} d \omega. \label{eq:stokes}
    \end{align}
\end{thm}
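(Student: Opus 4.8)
The plan is to reduce the global identity to a finite collection of local ones by a partition of unity, and then reduce each local identity to the one-variable fundamental theorem of calculus. (I read ``continuous $(n-1)$-forms'' as ``$(n-1)$-forms whose coefficients are $C^1$,'' since $d\omega$ must be defined; a genuinely continuous version of \eqref{eq:stokes} would be recovered afterwards by mollification.)

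First I would build an atlas $\{(U_a, \varphi_a)\}_a$ of $M$ adapted to its manifold-with-boundary structure: each $\varphi_a$ is an orientation-preserving diffeomorphism of $U_a$ onto an open subset of either $\R^n$ (an \emph{interior} chart) or of the half-space $\mathbb{H}^n := \{x \in \R^n \mid x_n \geq 0\}$ (a \emph{boundary} chart), with $\varphi_a(U_a \cap \partial M) = \varphi_a(U_a) \cap \{x_n = 0\}$. Compactness of $M$ makes the atlas finite, and I would take a smooth partition of unity $\{\rho_a\}$ subordinate to $\{U_a\}$ with each $\operatorname{supp}\rho_a$ compact in $U_a$, so that $\sum_a \rho_a \equiv 1$ on $M$.

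Using linearity of $d$ and of integration, $\omega = \sum_a \rho_a\omega$, so it suffices to prove $\int_{\partial M}\rho_a\omega = \int_M d(\rho_a\omega)$ for each $a$. Pulling back along $\varphi_a^{-1}$, this becomes a statement about a compactly supported $C^1$ $(n-1)$-form $\eta = \sum_{j=1}^n f_j\, dx_1 \wedge \cdots \wedge \widehat{dx_j} \wedge \cdots \wedge dx_n$ on an open subset of $\R^n$ or $\mathbb{H}^n$, for which $d\eta = \big(\sum_j (-1)^{j-1}\partial_j f_j\big)\,dx_1 \wedge \cdots \wedge dx_n$. Writing $\int d\eta$ as an iterated (Fubini) integral and integrating each term in its own $x_j$ direction by the fundamental theorem of calculus: for an interior chart every term vanishes (compact support), matching the absent boundary contribution; for a boundary chart the terms with $j \neq n$ still vanish, while the $j = n$ term leaves $\int_{\{x_n = 0\}} f_n\, dx_1 \cdots dx_{n-1}$, which is exactly $\int_{U_a \cap \partial M}\rho_a\omega$ once the induced (outward-normal-first) boundary orientation is used. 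Summing over $a$ gives \eqref{eq:stokes}.

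The main obstacle is the orientation bookkeeping rather than any estimate: one has to check that the atlas can be chosen orientation-coherent, that the boundary orientation it induces on $\partial M$ is well defined on chart overlaps, and that the $j = n$ term carries exactly the sign dictated by that convention; one also uses that $M$, being a smooth manifold with boundary, admits the interior/boundary chart dichotomy with no corner charts. With the adapted atlas and partition of unity in hand, the remaining analytic content is just the one-dimensional fundamental theorem of calculus applied inside Fubini's theorem. For the merely-continuous version, one would finally take $C^\infty$ forms $\omega_\varepsilon \to \omega$ uniformly and pass to the limit in the left-hand side, interpreting the right-hand side as $\lim_{\varepsilon \to 0}\int_M d\omega_\varepsilon$.
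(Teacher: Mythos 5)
Your argument is the standard partition-of-unity proof of Stokes' theorem (adapted atlas, Fubini, fundamental theorem of calculus in each coordinate direction), and it is correct; the paper itself offers no proof of this statement, quoting it as classical background, so there is nothing to compare against. Your parenthetical caveat is also well taken: as written the theorem's hypothesis of merely continuous $\omega$ does not suffice to define $d\omega$, and your reading ($C^1$ coefficients, with the continuous case recovered by mollification) is the right repair.
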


Stokes' theorem in \eqref{thm:stokes_std} is applicable in the \ac{BSA} setting in which $M$ is defined as $M = \{x \mid g_0(x) \geq 0\}$ for a $g_0 \in \R[x]$ such $\partial M = \{x \mid g_0(x) = 0\}$, and that the  $g_0(x)=0$ level set is singularity-free.

Stokes' theorem may be generalized to sets with nonsmooth structure, such as with corners and singular boundaries. One extension to Stokes' theorem uses the concept of Standard Domains \cite{whitney2012geometric}:
\begin{defn}
\label{defn:standard_domain}
    Let $S \subset \R^n$ be a bounded, open, connected set. Let $\Delta = \textrm{cl}(S) - S$ be the  boundary of $S$. 
    $S$ is a \textbf{standard domain} if it obeys the following properties:
    \begin{enumerate}         
        \item There exists a set $Q \subset \Delta$ in which $Q$ has zero measure w.r.t. the Hausdorff  measure of $\sigma_{\textrm{cl}(S)}$.
        \item For each point $p \in \Delta-Q$, there exists a coordinate assignment $p \mapsto [\theta(p); Z(p)] \in O(n)$, local coordinates $\alpha$ with $x = [\theta(p), \; Z(p)] \alpha$, a neighborhood $N_p \ni p$,
        and a smooth function $h \in C^1(N_p)$.
        \item Points in $\Delta \cap N_p$ can be represented as $\alpha_1 = h(\alpha_2, \ldots, \alpha_n)$.
        \item Points in $S \cap N_p$ can be represented as $\alpha_1 > h(\alpha_2, \ldots, \alpha_n)$.   
    \end{enumerate}
\end{defn}

\begin{rem}
    The interiors of full-dimensional polytopes are examples of standard domains.
\end{rem}

\begin{thm}[Theorem 14.A of \cite{whitney2012geometric}]
    Let $S$ be a standard domain, and let $Q$  and $\Delta$ be chosen in accordance with Definition \ref{defn:standard_domain}.    
    Further assume that $\omega$ is summable over $\Delta,$ $d\omega$ is summable over $S$, $\omega$ is continuous and bounded in $\text{cl}(S) - Q$, and $\omega$ is smooth in $S$. Then
        \begin{align}
        \int_{\text{cl}(S) - S} \omega = \int_{S} d \omega. \label{eq:stokes_std}
    \end{align}
\end{thm}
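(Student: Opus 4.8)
The plan is to deduce \eqref{eq:stokes_std} from the smooth, $C^1$-boundary case already recorded in Theorem~\ref{thm:stokes_std} by exhausting $S$ from the inside by compact smooth manifolds-with-boundary, so that all the genuine difficulty is concentrated in a shrinking neighborhood of the singular set $Q$. First I would fix $\epsilon > 0$ and, using that $Q$ is null for the Hausdorff measure $\sigma_{\textrm{cl}(S)}$, choose a cover of $Q$ by open balls $\{B_i\}$ with $\sum_i (\textrm{diam}\, B_i)^{n-1} < \epsilon$ and set $U_\epsilon = \bigcup_i B_i$. Outside $U_\epsilon$ the boundary $\Delta$ is, by parts (2)--(4) of Definition~\ref{defn:standard_domain}, a $C^1$ hypersurface described locally as a graph $\alpha_1 = h(\alpha_2,\dots,\alpha_n)$ with $S$ on the side $\alpha_1 > h$. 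Using a partition of unity subordinate to these graph charts I would push $\Delta$ slightly into $S$ --- replacing $\alpha_1 = h$ by $\alpha_1 = h + \rho$ for a small smooth $\rho > 0$ --- so as to produce a compact smooth manifold-with-boundary $M_\epsilon \subset S$ whose boundary splits into an interior copy of $\Delta \setminus U_\epsilon$ together with side walls contained in $U_\epsilon$ of total $(n-1)$-area $O\big(\sum_i (\textrm{diam}\, B_i)^{n-1}\big) = O(\epsilon)$. Applying Theorem~\ref{thm:stokes_std} to $M_\epsilon$ gives $\int_{\partial M_\epsilon} \omega = \int_{M_\epsilon} d\omega$.

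Next I would let $\epsilon \to 0$ along a sequence (shrinking $\rho$ as well) and pass to the limit. On the right, $M_\epsilon$ increases to $S$ up to a Lebesgue-null set, so summability of $d\omega$ over $S$ and dominated convergence give $\int_{M_\epsilon} d\omega \to \int_S d\omega$. On the left I split $\partial M_\epsilon$ into the interior copy of $\Delta \setminus U_\epsilon$ and the side walls. The side-wall contribution is bounded by $\big(\sup_{\textrm{cl}(S) - Q} \abs{\omega}\big)$ times an area that is $O(\epsilon)$, hence tends to $0$ --- this is exactly where boundedness of $\omega$ on $\textrm{cl}(S) - Q$ enters. The interior-copy contribution converges to $\int_{\Delta} \omega$ because $\omega$ is continuous up to $\Delta$ away from $Q$ and the graph charts depend continuously on $\rho$, while the omitted portion $\Delta \cap U_\epsilon$ again has $\sigma_{\textrm{cl}(S)}$-measure $O(\epsilon)$ and contributes negligibly by summability of $\omega$ over $\Delta$. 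Combining, $\int_{\Delta} \omega = \int_S d\omega$, which is \eqref{eq:stokes_std} since $\textrm{cl}(S) - S = \Delta$.

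The main obstacle is the geometric construction of the first step: producing a genuine $C^1$ manifold-with-boundary $M_\epsilon$ that agrees with an interior push-off of $\Delta$ outside $U_\epsilon$ yet whose side walls have $(n-1)$-area comparable to $\sum_i (\textrm{diam}\, B_i)^{n-1}$ (one may take them to be bounded-height pieces of the cylinders over the spheres $\partial B_i$), and then certifying that the boundary integral over those walls is truly $O(\epsilon)$ and is not inflated by the geometry of the covering. This is precisely why Definition~\ref{defn:standard_domain} must ask $Q$ to be Hausdorff-null rather than merely Lebesgue-null on $\Delta$, and why the hypothesis that $\omega$ itself --- not only $d\omega$ --- be bounded near $Q$ is indispensable; a further technical point is the possible non-closedness of $Q$, which must be accommodated in passing from the countable cover to a manifold to which Theorem~\ref{thm:stokes_std} applies. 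A routine point worth recording is that the local straightening $\beta_1 = \alpha_1 - h(\alpha_2,\dots,\alpha_n)$ is admissible already for $h \in C^1$, so on each regular graph chart Stokes reduces to the one-variable fundamental theorem of calculus. An alternative to the exhaustion is to localize from the outset via a partition of unity on $\textrm{cl}(S)$: interior pieces vanish because the integral of a compactly supported exact form is zero, regular-boundary pieces are handled by the straightening above, and the single piece supported near $Q$ is disposed of by the same covering-and-limit estimate, whose delicate term is $\int_S d\chi_\epsilon \wedge \omega$ for a cutoff $\chi_\epsilon$ localizing to $U_\epsilon$ --- again controlled by $\sigma_{\textrm{cl}(S)}(Q) = 0$ and the boundedness of $\omega$.
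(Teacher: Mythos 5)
This statement is quoted from Whitney \cite{whitney2012geometric} (Theorem 14.A) and the paper supplies no proof of its own, so there is no in-paper argument to compare against; I can only assess your sketch on its merits. Your exhaustion strategy is the standard one for Stokes-type theorems with a thin singular set and it is essentially sound: cover $Q$ by balls with $\sum_i (\textrm{diam}\, B_i)^{n-1} < \epsilon$, push the regular part of $\Delta$ into $S$ using the graph charts, apply the smooth Stokes theorem, and let the wall terms die using boundedness of $\omega$ off $Q$ and summability of $\omega$ on $\Delta$. Your alternative partition-of-unity localization is in fact closer in spirit to Whitney's own argument.

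Two points in your construction deserve more care. First, the domain $M_\epsilon$ you build is not a compact manifold with $C^1$ boundary: where the pushed-in graph meets the spherical (or cylindrical) walls over $\partial B_i$ you create corners, so Theorem \ref{thm:stokes_std} as stated does not apply directly. You either need to smooth these corners (checking that the smoothing changes both integrals by $o(1)$) or invoke a Stokes theorem for manifolds with corners/Lipschitz domains; you flag this as ``the main obstacle'' but the fix should be stated, since it is where the argument could silently fail. Second, your parenthetical that the walls are ``bounded-height pieces of the cylinders over the spheres $\partial B_i$'' gives area $O\bigl((\textrm{diam}\, B_i)^{n-2}\cdot h\bigr)$, and $\sum_i (\textrm{diam}\, B_i)^{n-2}$ need not be small when $\sum_i (\textrm{diam}\, B_i)^{n-1}$ is; you must take the walls to be pieces of the spheres $\partial B_i$ themselves, or cylinders of height $O(\textrm{diam}\, B_i)$, to get the claimed $O(\epsilon)$ total area. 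With those repairs the limit passage (dominated convergence on $\int_{M_\epsilon} d\omega$, absolute continuity of $\int_\Delta \omega$ on the small set $\Delta \cap U_\epsilon$, and continuity of $\omega$ up to $\Delta - Q$ for the pushed-in graphs) is correct.
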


Refer to \cite{harrison1999flux} for other formulations of Stokes' theorems towards nonsmooth boundaries (including for fractals).

\subsubsection{Stokes Constraints for Volume Approximation}

This subsection will summarize the presentation for Lemma 4.1 of \cite{tacchi2023stokes} and Equation 4.16 of \cite{tacchi2021thesis} with a focus on \ac{BSA} sets described by multiple polynomial inequality constraints.

Consider the $n$-dimensional volume approximation problem from of \eqref{eq:vol_approx} and \eqref{eq:vol_meas} with respect to \iac{BSA} set $L = \{x \mid \forall i: g_i(x) \geq 0\}$. Let $\lambda_L$ and $\sigma_L$ be the Lebesgue and Hausdorff measures of $L$, such that $(\lambda_L, \lambda_X - \lambda_L)$ is the optimal solution of \eqref{eq:vol_meas}. Define $L_i = \{x \mid g_i(x) = 0, \ g_{-i}(x) \geq 0\}$ as the $i$-th `face' of $\partial L$. 
For every function $\zeta \in C^1(\partial L),$ it holds from the definition of $\partial L$ that 
\begin{align}
    \sum_{i=1}^L \left(\int_{L_i} g_i(x) \zeta(x) d \sigma_L\right) = 0,
\end{align}
because the face $L_i$ is defined by $g_i(x) = 0$. 

We will require new assumptions throughout this work:
\begin{itemize}
    \item[A3] The set $\textrm{int}(L)$ is a standard domain.
    \item[A4]The set $L$ satisfies for each $i$ that $\forall x \in L: \nabla g_i(x) \neq 0$.  
\end{itemize}

A consequence of A4 is that the normal-vector map pointing out of $L_i$ is well-defined with 
\begin{equation}
    \label{eq:normal_map}
    \vec{\mathbf{n}}_i(x) = \nabla g_i(x) / \norm{\nabla g_i(x)}.
\end{equation}

\begin{rem}
    The standard domain description of $L$ is valid with respect to $\textrm{int}(L_i) = \{x \mid  g_i(x) = 0, \ g_{-i}(x) > 0\}$. Given that $g_i(x)=0$ on the boundary component $L_i$, 
    it holds that $\forall \zeta_L \in C^1(\partial L): \int_{L_i} g_i(x) \zeta(x) d \sigma_L(x) = \int_{\textrm{int}(L_i)} g_i(x) \zeta(x) d \sigma_L(x)$.
\end{rem}

For any function $u \in C^1(L)^n$, the following relation may be computed:
\begin{subequations}
\begin{align}
    \int_L \nabla_x \cdot u(x) dx &= \sum_{i=1}^L \int_{\textrm{int} L_i} {u(x) \cdot \vec{n}_i(x) d \sigma_L(x)} \\
    &= \sum_{i=1}^L \int_{\textrm{int} L_i} {(u(x) \cdot \nabla g_i(x)) / \norm{\nabla_x g_i(x)} d \sigma_L(x)}.
    \intertext{Defining $\nu_i \in \Mp{L_i}$ as a measure with density $1/\norm{\nabla_x g_i(x)}$ with respect to $\sigma_L$ on $L_i$, the above sum can be expressed as }
    \int_L \nabla_x \cdot u(x) dx  &= \sum_{i=1}^L \int_{\textrm{int} L_i} {(u(x) \cdot \nabla_x g_i(x)) d \nu_i}.
\end{align}
\end{subequations}

The unique optimum $(\mu, \hat{\mu}) = (\lambda_L, \lambda_X - \lambda_L)$ to \eqref{eq:vol_meas} must therefore furnish the existence of measures $\tilde{\nu}_i \in \Mp{\textrm{int} ({L_i})}$ such that $\forall u \in C(L)^1$:
\begin{align}
    \inp{\nabla_x \cdot u(x)}{\mu(x)} &= \textstyle \sum_{i=1}^L \inp{u(x) \cdot \nabla_x g_i(x)}{\tilde{\nu}_i(x)}. \label{eq:stokes_meas_basic_interior}
\end{align}

We can extend $\tilde{\nu}_i \in \Mp{\textrm{int} ({L_i})}$ to $\nu_i \in \Mp{{L_i}}$ given that $\partial L_i$ has measure 0 with respect to $\sigma_{L_i}$, and given that A4 ensures that the density $1/\norm{\nabla g_i}$ is bounded over $L_i$.

Equation 4.16 of \cite{tacchi2021thesis} is the augmentation of \eqref{eq:vol_meas} with the Stokes constraints of 
\begin{subequations}
\label{eq:stokes_meas_basic}
\begin{align}
    \inp{\nabla_x \cdot u(x)}{\mu(x)} &= \textstyle \sum_{i=1}^L \inp{u(x) \cdot \nabla_x g_i(x)}{\nu_i(x)} \\
    & \nu_i \in \Mp{L_i}.
\end{align}
\end{subequations}
Theorem 4.2 of \cite{tacchi2023stokes} proves that this Stokes infimal volume approximation program achieves its minimum in the case where $L$ is described by the smooth superlevel set of a single polynomial inequality constraint.

\subsubsection{Stokes Constraints with Symmetry}
\label{sec:stokes_symmetry}

Stokes constraints can be introduced to volume approximation methods while respecting a symmetry structure of $L$. 
This development is motivated by the presence of the  discrete and continuous symmetries in the slice-volume program \eqref{eq:slice_upper_sos} from Section \ref{sec:structure_dim_indep}. For simplicity, we will focus on the discrete sign symmetry from Section \eqref{sec:symmetry_discrete} (such as in the $n \in \{2, 4, 8\}$ case), but will later remark on how the continuous symmetry from Section \ref{sec:symmetry_continuous} can be accomodated.

Let $G$ be a discrete group with finite cardinality $\abs{G}$. The action of $\rho \in G$ on $x \in \R^n$ is $x \mapsto \rho x$, and the pullback of a $\rho$-action on a measure $\mu \in \Mp{X}$ is $\forall A \in X: \mu(\rho A)$. The measure $\mu$ is invariant under $G$ if $\forall A \in X: \mu(\rho A) = \mu(A)$, and the set of $G$-invariant measures supported on $X$ is $\Mp{X}^G$. Refer to \cite{riener2013exploiting} for more detail about invariant measures and their application in polynomial optimization.

The group-average of a function $f$ over a group $G$ is 
\begin{equation}
    [f(x)]_G = (1/\abs{G}) \sum_{\rho \in G} f(\rho x). \label{eq:group_average}
\end{equation} 

The returned average $f_G(x)$ is a $G$-invariant function. The averaging procedure $[\cdot]_G$ is also called the Reynolds operator.

The pairing $\inp{\cdot}{\cdot}$ satisfies 
\begin{align}
    \forall f \in C(X), \ \mu \in \Mp{X}^G: & \qquad  \inp{f}{\mu} = \inp{f_G}{\mu}. \label{eq:invariant_measure}
\end{align}

If the set $L$ is $G$-symmetric, then the Lebesgue measure $\lambda_X$ and Hausdorff measure $\sigma_X$ are both  $G$-invariant. The Stokes constraint in \eqref{eq:stokes_meas_basic} can be equivalently expressed in a $G$-invariant manner as 
\begin{align}
    \inp{[\nabla_x \cdot u(x)]_G}{\mu(x)} &= \textstyle \sum_{i=1}^L \inp{[u(x) \cdot \nabla_x g_i(x)]_G}{\nu_i(x)}. \label{eq:stokes_meas_G}
\end{align}

\begin{rem}
In the case where $G$ is an infinite group (with a continuous action) with unique Haar (symmetry-invariant) measure $\nu$, the group average in \eqref{eq:group_average} is taken as the integral $1/\textrm{vol}(G) \int_{\rho \in G} f(\rho x) d \nu(\rho)$. 
\end{rem}

\subsection{Stokes Constraints for Slice-Volume}

We adjoin the slice-volume programs \eqref{eq:slice_upper} and \eqref{eq:slice_upper_meas}  with Stokes-type constraints for volume approximation. 

\subsubsection{Stokes Measure Program}
The Stokes constraint in \eqref{eq:stokes_meas_basic} will be performed with respect to the volume-slice-coordinate $y$.

Define the support set $\Psi_L^i$ on the boundary of $\Psi_L$ as
\begin{align}
    \Psi_L^i &= \Psi_L \mid_{g_i(\theta t + y Z) = 0} & & \forall i\in 1..N_c.
\end{align}
New boundary measures will be defined as in $\nu_i \in \Mp{\Psi_L^i}$ to form the Stokes scheme.

The slice-volume program has a subgroup $G$ generated by the reflection $G: (\theta, t, Z, y) \leftrightarrow -(\theta, t, Z, y)$. The averaging operation for a function $f(\theta, t, Z, y)$ over this subgroup is
\begin{align}
    [f(\theta, t, Z, y)]_G &= (f(\theta, t, Z, y)+f(-\theta, -t, -y, -Z))/2.
\end{align}

The induced Stokes constraint from \eqref{eq:stokes_meas_slice} for the constraint $\forall i: g_i(\theta t + Z y) = 0$ with respect to the test functions $u \in C^1(\Psi_L)^n$ is
\begin{subequations}
\begin{align}
    \inp{[\nabla_y \cdot u(\theta, t, Z, y)]_G}{\mu(\theta, t, Z, y)} &= \textstyle \sum_{i=1}^L \inp{[u(\theta, t, Z, y) \cdot \nabla_y g_i(\theta t +  Zy)]_G}{\nu_i(\theta, t, Z, y)},\label{eq:stokes_meas_slice}
    \intertext{which will be written in shorthand  as}
    [\text{grad}_y]_G \mu &= \textstyle \sum_{i=1}^L[\text{grad}_y ((\theta t +  Zy)_\#g_i)]_G \nu_i.
\end{align}
\end{subequations}

\begin{prop}
\label{prop:grad_zero}
    The gradient $\nabla_y g_i(\theta t +  Zy)$ can be computed by the chain rule:
    \begin{align}
    \nabla_y g_i(\theta t +  Zy) = Z^T \nabla_x g_i(\theta t +  Zy). \label{eq:chain_derivative}
\end{align}
This gradient is zero whenever $\theta$ and $\nabla_y g_i$ are colinear, given that $Z^T \theta = 0$ from \eqref{eq:supp_psi}.
\end{prop}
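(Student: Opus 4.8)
The plan is to prove the two assertions of Proposition~\ref{prop:grad_zero} in turn. For the chain-rule identity, I would view $g_i(\theta t + Zy)$ as the composition of the affine map $\Phi : y \mapsto \theta t + Zy$ (with $\theta$, $t$, $Z$ held fixed, so $\Phi : \R^{n-1} \to \R^n$) with the polynomial $g_i : \R^n \to \R$. The Jacobian of $\Phi$ with respect to $y$ is the constant matrix $Z \in \R^{n\times(n-1)}$, since the term $\theta t$ contributes nothing to the $y$-derivative. The multivariate chain rule then yields $\nabla_y\,(g_i\circ\Phi)(y) = (D\Phi)^{T}\,(\nabla_x g_i)(\Phi(y)) = Z^{T}(\nabla_x g_i)(\theta t + Zy)$, which is exactly \eqref{eq:chain_derivative}. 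This is a one-line computation; the only bookkeeping to be careful with is the gradient/Jacobian transpose convention, i.e.\ that $\nabla_y$ returns a column vector in $\R^{n-1}$ and $Z^{T}$ maps $\R^{n}\to\R^{n-1}$ consistently.

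For the second assertion (reading the colinearity as being between $\theta$ and $\nabla_x g_i$, since $\nabla_y g_i\in\R^{n-1}$ and $\theta\in\R^n$ live in different spaces), I would recall that membership in $\Psi$, and hence in $\Psi_L^i$, enforces $[\theta,\,Z]\in O(n)$ through \eqref{eq:supp_omega_Z}--\eqref{eq:supp_psi}. Orthogonality of the matrix $[\theta,\,Z]$ means in particular that every column of $Z$ is orthogonal to $\theta$, i.e.\ $Z^{T}\theta = 0 \in \R^{n-1}$. Consequently, if $\nabla_x g_i(\theta t + Zy)$ is colinear with $\theta$, say $\nabla_x g_i(\theta t + Zy) = c\,\theta$ for some scalar $c\in\R$, then combining this with \eqref{eq:chain_derivative} gives $\nabla_y g_i(\theta t + Zy) = Z^{T}(c\,\theta) = c\,(Z^{T}\theta) = 0$. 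I would also add the complementary remark that this is the \emph{only} way the $y$-gradient can vanish while $\nabla_x g_i \neq 0$, because $Z^{T}$ has full column rank $n-1$ and its kernel is exactly $\mathrm{span}\{\theta\}$.

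There is essentially no serious obstacle: the statement is a direct consequence of the chain rule together with the orthogonality built into the support set $\Psi$. The one point worth stating precisely is that $Z^{T}\theta = 0$ follows from $[\theta,\,Z]\in O(n)$ in \eqref{eq:supp_omega_Z} and \emph{not} from the Stiefel condition $Z\in\mathbb{V}_{n-1}(\R^n)$ alone — the latter only says the columns of $Z$ are orthonormal among themselves, not that they are orthogonal to $\theta$ — so the proof must cite the right line of \eqref{eq:supp_sets}. Beyond that, the argument is routine.
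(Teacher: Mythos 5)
Your argument is correct and matches the paper's (implicit) reasoning exactly: the proposition is proved in the text by the same one-line chain-rule computation plus the observation that $Z^{T}\theta=0$, which is precisely what you do. Your reading of the colinearity condition as being between $\theta$ and $\nabla_x g_i$ (rather than $\nabla_y g_i$) is the correct interpretation of a small imprecision in the statement, and your added remark that $\ker Z^{T}=\mathrm{span}\{\theta\}$ (so this is the only way the $y$-gradient vanishes when $\nabla_x g_i\neq 0$) is a useful strengthening consistent with how the proposition is used in Theorem \ref{thm:stokes_meas_same}.
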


Program \eqref{eq:slice_upper_meas} with Stokes constraints from \eqref{eq:stokes_meas_slice} has the form of
\begin{subequations}
\label{eq:slice_upper_meas_stokes}
\begin{align}
    m^*_s = &\sup \  \inp{1}{\mu} \label{eq:slice_upper_meas_obj_stokes} \\
    &\mu_0 \otimes \lambda_{R}^{n-1} = \mu + \hat{\mu} \label{eq:volume_complement_stokes}\\
    & \inp{1}{\mu_0} = 1 \label{eq:slice_upper_meas_prob_stokes}\\  
    & [\text{grad}_y]_G \mu = \textstyle \sum_{i=1}^L[\text{grad}_y (\theta t +  Zy)_\#g_i]_G \nu_i \label{eq:slice_upper_meas_grad_stokes} \\
    & \mu_0 \in \Mp{\Omega_Z}, \\
    & \hat{\mu} \in  \Mp{\Psi} \\
    & \mu \in \Mp{\Psi_L} \label{eq:slice_upper_meas_L_stokes} \\
    &\nu \in \Mp{L_i}, \ \forall i \in 1..N_c.\label{eq:slice_upper_meas_nu_stokes} 
\end{align}
\end{subequations}

\begin{thm}
\label{thm:stokes_meas_same}
    Programs \eqref{eq:slice_upper_meas} and \eqref{eq:slice_upper_meas_stokes} have the same optimal value with $m^*_s = m^*$ under assumptions A1-A4.
\end{thm}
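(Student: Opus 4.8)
The plan is to show the two optimization problems have the same feasible objective values by exhibiting inclusions in both directions at the level of feasible points, exactly as the analogous volume-approximation result (Theorem 4.2 of \cite{tacchi2023stokes}) was established. One direction is immediate: any feasible point $(\mu_0,\mu,\hat\mu,\{\nu_i\})$ of \eqref{eq:slice_upper_meas_stokes} yields, by simply discarding the $\nu_i$ and the Stokes equality \eqref{eq:slice_upper_meas_grad_stokes}, a feasible point of \eqref{eq:slice_upper_meas} with the same objective $\inp{1}{\mu}$; hence $m^*_s \le m^*$. Wait — actually the Stokes constraint is an added restriction, so adding it can only shrink the feasible set; thus $m^*_s \le m^*$ is the easy inequality and the real content is $m^*_s \ge m^*$, i.e. the Stokes constraint does not cut off the optimum.

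For $m^*_s \ge m^*$, the idea is to take the optimizer of \eqref{eq:slice_upper_meas} and show it (or a suitable optimizer) satisfies the Stokes relations, by first reducing to a fiberwise statement. First I would invoke Theorem \ref{thm:strong_duality_indicator} together with the structure of the measure program to identify a canonical optimal $(\mu_0^\star,\mu^\star,\hat\mu^\star)$: concretely, $\mu_0^\star$ can be taken supported on the optimal slicing locus, and conditionally on $(\theta,t,Z)$ the measure $\mu^\star$ disintegrates as $\mu_0^\star \otimes \lambda_{Y(\theta,t,Z)}$ where $Y(\theta,t,Z) = \{y \in B_R^{n-1} \mid \theta t + Zy \in L\}$, with $\hat\mu^\star$ the complementary Lebesgue mass — this is exactly the solution built in the proof of Theorem \ref{thm:slice_meas_upper_bound}/Theorem \ref{thm:slice_upper_meas}. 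Then, for each fixed frame $(\theta,t,Z)$ with $Z \in \mathbb{V}_{n-1}(\R^n)$, the slice $Y(\theta,t,Z)$ is (for a.e.\ such frame) a \ac{BSA} subset of $B_R^{n-1}$ cut out by the polynomials $y \mapsto g_i(\theta t + Zy)$, whose gradients are $Z^T\nabla_x g_i$ by Proposition \ref{prop:grad_zero}; assumptions A3--A4, pushed through the affine diffeomorphism $y \mapsto \theta t + Zy$, guarantee that $\mathrm{int}(Y(\theta,t,Z))$ is a standard domain and that the relevant boundary densities $1/\norm{\nabla_y g_i}$ are finite wherever the gradient is nonzero. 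The scalar Stokes identity \eqref{eq:stokes_meas_basic} (equivalently \eqref{eq:stokes_meas_basic_interior}) in the variable $y$ then produces boundary measures $\nu_i(\cdot \mid \theta,t,Z) \in \Mp{Y(\theta,t,Z) \cap \{g_i = 0\}}$ on this fiber. I would then define $\nu_i := \mu_0^\star \otimes \nu_i(\cdot\mid\theta,t,Z)$ by gluing the fiberwise boundary measures against $\mu_0^\star$, check that this lies in $\Mp{\Psi_L^i} \subseteq \Mp{L_i}$, and verify that integrating the fiberwise identity against $\mu_0^\star$ recovers \eqref{eq:slice_upper_meas_grad_stokes} before symmetrization; applying the Reynolds operator $[\cdot]_G$ to both sides and using \eqref{eq:invariant_measure} together with $G$-invariance of $\mu^\star,\hat\mu^\star,\nu_i$ (inherited from the evenness of $L$ and of $\lambda_R^{n-1}$, plus the $(\theta,t,Z,y)\leftrightarrow-(\theta,t,Z,y)$ symmetry of the whole construction) upgrades it to the $G$-averaged form \eqref{eq:slice_upper_meas_grad_stokes} as written.

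Two points need care, and the second is the main obstacle. The first is measurability/selection: the fiber sets $Y(\theta,t,Z)$ and their boundary measures must depend measurably on $(\theta,t,Z)$ so that the gluing $\mu_0^\star \otimes \nu_i(\cdot\mid\cdot)$ is well-defined — this follows from a measurable-selection or disintegration-of-measures argument since everything is semialgebraic in the parameters, but it should be stated. The second, harder issue is the hypotheses A3--A4: they are assumed for $L$ in $\R^n$, whereas the Stokes argument is applied to the \emph{sliced} sets $Y(\theta,t,Z) \subset \R^{n-1}$. One must show that for $\mu_0^\star$-a.e.\ frame, the affine slice inherits the standard-domain property and the nonvanishing-gradient property, which in general can fail on a lower-dimensional locus of ``bad'' directions (tangencies of the hyperplane to $\partial L$); the resolution is that such bad frames form a set of $\mu_0^\star$-measure zero — or, following the device used in the excerpt's proof of Theorem \ref{thm:slice_meas_upper_bound}, one passes to an infimizing/supremizing sequence and uses almost-uniform convergence so that the measure-zero exceptional set is harmless. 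I would also note, as the theorem permits, that when $L$ is itself described by a single smooth inequality the fiber Stokes program attains its optimum by Theorem 4.2 of \cite{tacchi2023stokes}, giving equality of optima directly rather than only in the limit. Finally, the continuous $O(n-1)$ symmetry of Section \ref{sec:symmetry_continuous} can be absorbed into $G$ by the Remark on Haar-measure averaging, so the same argument covers the symmetrized Stokes constraint in full generality.
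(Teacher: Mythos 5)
Your overall strategy --- the easy inequality $m^*_s \le m^*$ from the Stokes constraint shrinking the feasible set, plus equipping the (near-)optimizers of \eqref{eq:slice_upper_meas} with boundary measures $\nu_i$ that validate \eqref{eq:slice_upper_meas_grad_stokes} --- is the same as the paper's, and you correctly isolate the central difficulty: $\nabla_y g_i(\theta t + Zy) = Z^T\nabla_x g_i(\theta t + Zy)$ can vanish on the sliced boundary even though A4 forbids $\nabla_x g_i = 0$, so the density $1/\norm{\nabla_y g_i}$ used to build $\nu_i$ may blow up. However, your proposed resolution of that difficulty fails. You argue that the ``bad'' frames (those whose slice has tangency points) form a set of $\mu_0^\star$-measure zero. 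But the optimal $\mu_0^\star$ is typically a Dirac concentrated at a single frame $(\theta^*,t^*,Z^*)$, so a Lebesgue-null set of frames can carry full $\mu_0^\star$-mass, and there is no reason the optimal slicing plane avoids points of $\partial L$ whose normal $\nabla_x g_i$ is parallel to $\theta^*$. Your fallback of passing to a supremizing sequence does not help either: every frame in the sequence can be bad in exactly the same way.

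The paper resolves the issue \emph{within} each fiber rather than by genericity across frames. For an arbitrary fixed frame $(\theta',t',Z')$ it considers $\Delta_i = \{y \in Y_i : \nabla_y g_i(\theta' t' + Z' y) = 0\}$ and shows that every such $y$ falls into one of three cases --- an interior point of $Y_i$, a point isolated in $Y_i$ up to a volume-zero set, or a point of $\partial Y_i$ lying on another face $g_{i'}=0$ whose $y$-gradient does not vanish --- each of which can be discarded from the boundary measure attached to face $i$. The measure $\zeta_i'$ with density $1/\norm{\nabla_y g_i}$ is then defined only on $Y_i - \Delta_i$, where it is finite, and $\nu_i' = \delta_{t=t'}\otimes\delta_{\theta=\theta'}\otimes\delta_{Z=Z'}\otimes\zeta_i'$ validates the Stokes identity for that frame; since this works for \emph{every} frame, all the Dirac-type feasible points from Theorem \ref{thm:slice_meas_upper_bound} survive the added constraint. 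To repair your fiberwise-disintegration version you would need this per-fiber excision argument in place of the measure-zero-frames claim; the rest of your write-up (disintegration, measurable gluing of $\nu_i(\cdot\mid\theta,t,Z)$ against $\mu_0^\star$, and the Reynolds-operator step) is consistent with the paper but is not where the real work lies.
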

\begin{proof}     
    Letting $\theta, t\in \Omega$ be an arbitrary slicing direction and offset, we can choose a $Z'$ such that $[\theta, Z'] \in O(n)$. Defining the set $Y = \{y \in B^{n-1}_R \mid \theta' t' + Z' y \in L\}$ with Lebesgue measure $\lambda_Y$, the proof of Theorem \eqref{thm:slice_meas_upper_bound} offers feasible variates of  $\mu_0' = \delta_{\theta=\theta'} \otimes \delta_{t=t'} \otimes \delta_{Z = Z'}$, $\mu' = \mu_0' \otimes  \lambda_Y$, and $\hat{\mu}' = \mu_0' \otimes \lambda_{B^{n-1}_R - Y}$.   
    
    For each $i \in N_c$, define $Y_i = \{y \in B^{n-1}_R \mid \theta' t' + Z' y \in L_i\}$ and let $\sigma_i(y)$ be the Hausdorff measure of $y$ over $Y_i$. 
    While A4 implies that $\forall x \in L_i: \ \nabla_x g_i(x) \neq 0$, Proposition \ref{prop:grad_zero} allows for the possibility of points $y \in Y_i$ with $\nabla_y g_i(\theta' t' + Z' y) = 0$. It is therefore not possible to select $\xi_i'$ as unique measure with a density of $1/\norm{\nabla_y
    g(\theta' t' + Z' y)}$ over $\sigma_i(y)$ with $\nu'_i =\delta_{t=t'} \otimes \delta_{\theta= \theta'} \otimes \xi_i \otimes \delta_{Z = Z'}$, given that $1/\norm{\nabla_y
    g(\theta' t' + Z' y)}$ may be unbounded.

    For a given $(t', \theta', Z')$, every point $y \in Y_i$ with $\nabla_y g_i(\theta' t' + Z' y) =0$ has a normal vector $\nabla_x g_i(\theta' t' + Z' y) = s \theta'$ for some $s \in (\R \setminus 0)$ (perpendicular to the slicing plane). We categorize points $y \in L_i$ with $\nabla_y g_i(\theta' t' + Z' y) =0$ into three exclusive cases:
    \begin{enumerate}
        \item $\exists \epsilon > 0 : B_\epsilon^{n-1}(y) = B_\epsilon^{n-1}(y) \cap Y_i$. This implies that $y$ is in the interior of $Y_i$.
        \item $\forall \epsilon > 0: \textrm{Vol}_{n-1}(B_\epsilon^{n-1}(y) \cup Y_i) = 0$. The point $y$ is isolated in $Y_i$ (up to a volume 0 set).
        \item $y \in \partial Y_i$. The point $y$ is on the boundary of another constraint $g_{i'} = 0$ with $\nabla_y g_{i'}(\theta' t' + Z' y) \neq 0$.
    \end{enumerate}


    Any $y$ falling into cases 1, 2, or 3 can safely be discarded from volume computation with respect to the Stokes constraint on face $i$ (for case 3, the point $y$ will be relevant on face $i' \neq i$).

    Define $\Delta_i = \{y \in Y_i: \nabla_y g_i(\theta' t' + Z' y) =0\},$ and $\tilde{\sigma}_i \in \Mp{Y_i}$ as the restriction of the Hausdorff measure $\sigma_i \in \Mp{Y_i}$ to the set $Y_i - \Delta_i$. The measure $\zeta_i' \in \Mp{Y_i - \Delta_i}$ can be picked uniquely as the measure with a density of $1/\norm{\nabla_y
    g(\theta' t' + Z' y)}$ over $\tilde{\sigma_i}(y)$.  The Stokes measures $\nu'$ can now be chosen as $\nu'_i =\delta_{t=t'} \otimes \delta_{\theta= \theta'} \otimes \delta_{Z = Z'} \otimes \zeta_i'$.
    
    The addition of the Stokes constraints in \eqref{eq:slice_upper_meas_grad_stokes} did not affect the choice nor feasibility of $(\mu'_0, \mu', \hat{\mu}')$. As such, the objective values of \eqref{eq:slice_upper_meas} and \eqref{eq:slice_upper_meas_stokes} are the same.    
    \end{proof}

    \subsubsection{Stokes Function Program}

A  \ac{LP} continuous function formulation for the Stokes-constrained slice-volume approximation problem is
\begin{subequations}
\label{eq:slice_upper_stokes}
\begin{align}
    p^*_s = &\inf_{\gamma \in \R} \quad \gamma \\
    & \gamma \geq  \Lambda_R w(\theta, t, Z) & & \forall (\theta, t, Z) \in \Omega_Z  \\
    & w(\theta, t, Z, y) \geq 1 + [\nabla_y \cdot u(\theta, t, Z, y)]_G & & \forall (\theta, t, Z, y) \in \Psi_L \label{eq:slice_upper_psi_stokes}\\
    & -[u(\theta, t, Z, y)\cdot \nabla_y g_i(\theta t + Z y)]_G \geq 0 & & \forall (\theta, t, Z, y) \in \Psi^i_L \\
    & w(\theta, t, Z, y) \geq 0 & &\forall (\theta, t, Z, y) \in \Psi \\
    & w(\theta, t, Z, y) \in C(\Psi) \\
    & u(\theta, t, Z, y) \in [C^{0, 0, 0, 1}(\Psi_L)]^n.
\end{align}
\end{subequations}

\begin{rem}
\label{rem:deg_freedom}
    Constraint \eqref{eq:slice_upper_psi_stokes} is less strict than the requirement in \eqref{eq:slice_upper_psi} that $w \geq 1$ over $\Psi_L$.
\end{rem}

\begin{prop}
\label{prop:stokes_valid}
The optimal values of \eqref{eq:slice_upper} and \eqref{eq:slice_upper_stokes} satisfy $p^*_s \leq p^*$.
\end{prop}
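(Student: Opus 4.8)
\textbf{Proof proposal for Proposition \ref{prop:stokes_valid}.}

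The plan is to show that any feasible solution of the unconstrained slice-volume program \eqref{eq:slice_upper} can be converted into a feasible solution of the Stokes-augmented program \eqref{eq:slice_upper_stokes} with the same value of $\gamma$; since \eqref{eq:slice_upper_stokes} has the same objective $\gamma$ and we are taking infima, this immediately gives $p^*_s \leq p^*$. Concretely, let $(\gamma, w)$ be feasible for \eqref{eq:slice_upper}. I would simply set the Stokes test function to zero, $u(\theta, t, Z, y) \equiv 0$, and keep the same $w$ and $\gamma$. One then checks each constraint of \eqref{eq:slice_upper_stokes} in turn.

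The key verification steps are as follows. The objective constraint $\gamma \geq \Lambda_R w(\theta,t,Z)$ on $\Omega_Z$ is identical in both programs and holds by hypothesis. With $u \equiv 0$ the divergence term $[\nabla_y \cdot u]_G$ vanishes, so constraint \eqref{eq:slice_upper_psi_stokes} reduces to $w(\theta,t,Z,y) \geq 1$ on $\Psi_L$, which is exactly \eqref{eq:slice_upper_psi}; likewise $-[u \cdot \nabla_y g_i(\theta t + Zy)]_G = 0 \geq 0$ holds trivially on each $\Psi_L^i$. The nonnegativity constraint $w \geq 0$ on $\Psi$ and the continuity requirement $w \in C(\Psi)$ carry over verbatim from \eqref{eq:slice_upper_psi_comp}--\eqref{eq:slice_upper_w}. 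Finally $u \equiv 0$ lies in $[C^{0,0,0,1}(\Psi_L)]^n$. Hence $(\gamma, w, 0)$ is feasible for \eqref{eq:slice_upper_stokes} with objective value $\gamma$, so the optimal value of \eqref{eq:slice_upper_stokes} is no larger than $\gamma$. Taking the infimum over all $(\gamma, w)$ feasible for \eqref{eq:slice_upper} yields $p^*_s \leq p^*$.

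There is essentially no main obstacle here — this is the ``adding constraints with extra degrees of freedom can only help'' observation already flagged in Remark \ref{rem:deg_freedom}: the Stokes relations are redundant equalities in the measure picture, so on the dual function side they only enlarge the feasible set via the new free variable $u$. The only point requiring minor care is confirming that the group-averaging operator $[\cdot]_G$ is linear and fixes the zero function, so that plugging in $u \equiv 0$ genuinely kills the added terms; this is immediate from the definition \eqref{eq:group_average}. (One could alternatively argue via weak duality against \eqref{eq:slice_upper_meas_stokes} using Theorem \ref{thm:stokes_meas_same}, but the direct feasibility argument is cleaner and self-contained.)
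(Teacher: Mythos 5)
Your proposal is correct and follows exactly the paper's argument: take any feasible $(\gamma, w)$ for \eqref{eq:slice_upper}, set $u \equiv 0$, and observe that $(\gamma, w, 0)$ is feasible for \eqref{eq:slice_upper_stokes} with the same objective value. Your version simply spells out the constraint-by-constraint verification that the paper leaves implicit.
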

\begin{proof}
    Let $(\gamma, w)$ be a feasible solution to the constraints of \eqref{eq:slice_upper}. The choice of $u=0$ provides a feasible point $(\gamma, w, 0)$ to the constraints of \eqref{eq:slice_upper_stokes}, ensuring the lower-bound of $p^*_s \leq p^*$.
\end{proof}

\begin{thm}
    Problems \eqref{eq:slice_upper_meas_stokes} and \eqref{eq:slice_upper_stokes} are strong duals with $p^*_s = m^*_s$ under A1-A4.
\end{thm}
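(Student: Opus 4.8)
The plan is to follow the same template as the proof of Theorem~\ref{thm:strong_duality_indicator} (Appendix~\ref{app:duality}), which establishes strong duality for the un-Stokes'd pair \eqref{eq:slice_upper}–\eqref{eq:slice_upper_meas}, and to carry the extra Stokes variables $(u,\nu_i)$ along through the same functional-analytic machinery. Concretely, I would first cast \eqref{eq:slice_upper_meas_stokes} as a conic linear program over the cone of nonnegative Borel measures on the relevant compact support sets $\Omega_Z$, $\Psi$, $\Psi_L$, and each $\Psi_L^i$, augmented with the (finitely many, or Haar-averaged) linear Stokes equality constraints \eqref{eq:slice_upper_meas_grad_stokes}. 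I would then identify the predual cone: continuous functions on those same compacta, with the new dual variable $u\in[C^{0,0,0,1}(\Psi_L)]^n$ paired against the measures $\mu$ and $\nu_i$ via the bilinear form $\langle \nabla_y\cdot u,\mu\rangle$ and $\langle u\cdot\nabla_y g_i,\nu_i\rangle$. A routine bookkeeping check shows that the adjoint of the constraint map of \eqref{eq:slice_upper_meas_stokes} is exactly the constraint map of \eqref{eq:slice_upper_stokes}, so the two programs are a genuine primal–dual conic pair; in particular weak duality $p_s^*\ge m_s^*$ is immediate, and Proposition~\ref{prop:stokes_valid} together with Theorem~\ref{thm:stokes_meas_same} already pins both values between $m^*=m_s^*$ and $p^*=p_s^*$ with $m^*=p^*$ by Theorem~\ref{thm:strong_duality_indicator}, so in fact $m_s^*=p_s^*$ follows \emph{for free} from the earlier results. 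I would state this shortcut first, since it essentially closes the theorem: $m^* \le m_s^* \le p_s^* \le p^* = m^*$.

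However, to make the statement self-contained as a genuine strong-duality result (no duality gap \emph{and} attainment on the measure side, which the squeeze argument does not by itself guarantee if one wants it), I would also give the direct Zowe/Anderson–Nash-style argument: the feasible set of \eqref{eq:slice_upper_meas_stokes} is nonempty (Theorem~\ref{thm:stokes_meas_same} constructs an explicit feasible point) and weak-$*$ compact. Weak-$*$ compactness is where the Stokes variables require a little care: boundedness of $\mu,\hat\mu,\mu_0$ is Lemma~\ref{lem:meas_bounded}, but the boundary measures $\nu_i$ must also be shown bounded. For this I would invoke exactly the construction inside the proof of Theorem~\ref{thm:stokes_meas_same}: on the reduced face $Y_i-\Delta_i$ the density $1/\|\nabla_y g_i(\theta t+Zy)\|$ is bounded (the problematic locus $\Delta_i$ where the gradient vanishes has been excised and carries zero Hausdorff mass on $L_i$), so $\nu_i$ is a finite measure on the compact set $\Psi_L^i$; combined with continuity of the constraint map this gives closedness of the image cone, hence no gap and attainment.

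The key steps, in order: (i) write both programs in standard conic primal–dual form over $C_+$ / $\Mp{\cdot}$ on compacta, exhibiting the pairing that makes $u$ dual to the Stokes equality; (ii) verify adjointness of the constraint maps, giving weak duality; (iii) note the immediate squeeze $m^*\le m_s^*\le p_s^*\le p^*$ and cite Theorems~\ref{thm:strong_duality_indicator}, \ref{thm:stokes_meas_same} and Proposition~\ref{prop:stokes_valid} to conclude equality; (iv) for completeness, re-derive it intrinsically by showing the measure feasible set is nonempty (Theorem~\ref{thm:stokes_meas_same}) and weak-$*$ compact — the only new ingredient beyond Appendix~\ref{app:duality} being boundedness of the $\nu_i$, handled via the bounded-density argument on $Y_i-\Delta_i$ from the proof of Theorem~\ref{thm:stokes_meas_same}. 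The main obstacle is step~(iv)'s compactness/closedness bookkeeping for the boundary measures: one must be sure the excision of $\Delta_i$ is compatible with the support declarations $\nu_i\in\Mp{L_i}$ in \eqref{eq:slice_upper_meas_nu_stokes} and does not introduce an unbounded sequence of feasible $\nu_i$ escaping weak-$*$ compactness — but since $\Psi_L^i$ is compact and the relevant densities are uniformly bounded away from the excised singular locus, this is controllable, and otherwise the argument is a verbatim adaptation of the non-Stokes case in Appendix~\ref{app:duality}.
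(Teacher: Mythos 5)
Your steps (i)--(iii) reproduce the paper's proof exactly: Appendix \ref{app:duality_stokes} casts the two programs as a conic primal--dual pair to obtain weak duality $p^*_s \geq m^*_s$, and then closes the gap by the squeeze $m^*_s = m^* = p^* \geq p^*_s \geq m^*_s$, citing Theorem \ref{thm:stokes_meas_same}, Theorem \ref{thm:strong_duality_indicator}, and Proposition \ref{prop:stokes_valid}. So the core of your proposal is correct and matches the paper's route.

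One caution on your supplementary step (iv): the direct Zowe/Anderson--Nash argument does not transfer to the Stokes program as you suggest, because the constraints of \eqref{eq:slice_upper_meas_stokes} do not bound the masses of the boundary measures $\nu_i$. By Proposition \ref{prop:grad_zero}, the pairing density $\nabla_y g_i(\theta t + Zy) = Z^T \nabla_x g_i(\theta t + Zy)$ vanishes wherever $\nabla_x g_i$ is colinear with $\theta$, so arbitrary mass may be added to $\nu_i$ on that locus without violating \eqref{eq:slice_upper_meas_grad_stokes}; the feasible set is therefore not weak-$*$ compact in general, and the bounded-density construction in Theorem \ref{thm:stokes_meas_same} only exhibits \emph{one} bounded feasible point, not boundedness of all of them. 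This is presumably why the paper relies solely on the squeeze rather than re-running the Appendix \ref{app:duality} machinery. Since you present (iv) only for completeness and step (iii) already establishes $p^*_s = m^*_s$, the theorem as stated is proven, but step (iv) should be dropped or substantially repaired if included.
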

\begin{proof}
See Appendix \ref{app:duality_stokes}.
    
\end{proof}

\subsection{Sum-of-Squares Slice-Volume with Stokes Constraints}

\begin{prob}
The degree-$k$ \ac{SOS} truncation of program \eqref{eq:slice_upper_stokes} is 
\begin{subequations}
\label{eq:slice_upper_stokes_sos}
\begin{align}
    p^*_{s, k} = &\inf_{\gamma \in \R} \quad \gamma \\
    & \gamma - \textstyle \Lambda_R w(\theta, t, Z) \in \Sigma[\Omega_Z]_{\leq 2k}\label{eq:slice_upper_stokes_sos_int}\\
    & w(\theta, t, Z, y) - 1 - [\nabla_y \cdot u(\theta, t, Z, y)]_G \in \Sigma[\Psi_L]_{\leq 2k} \label{eq:slice_upper_stokes_sos_psi}\\
    & -[u(\theta, t, Z, y)\cdot \nabla_y g_i(\theta t + Z y)]_G \geq 0 \in \Sigma[\Psi_{L}^i]_{\leq 2k} \label{eq:slice_upper_stokes_sos_ug}\\
    & w(\theta, t, Z, y) \in \Sigma[\Psi]_{\leq 2k} \label{eq:slice_upper_sos_stokes_psi_comp} \\
    & w(\theta, t, Z, y) \in \R[\theta, t, y]_{\leq 2k}\label{eq:slice_upper_stokes_sos_w} \\
    & u(\theta, t, Z, y) \in \R[\theta, t,  Z, y]_{\leq 2k - 2 \max \deg g_i}^n. \label{eq:slice_upper_stokes_sos_u}
\end{align}
\end{subequations}    
\end{prob}

\begin{thm}
\ac{SOS} truncations in\eqref{eq:slice_upper_stokes_sos} will converge as $\lim_{k \rightarrow \infty} p^*_{s,k} \rightarrow p^*$ under A1-A4.
\end{thm}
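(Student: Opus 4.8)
The plan is to establish the convergence $\lim_{k\to\infty} p^*_{s,k} = p^*$ by combining two facts already available in the excerpt: (i) the continuous-function Stokes program \eqref{eq:slice_upper_stokes} has the same optimal value as the original program, i.e. $p^*_s = p^*$ (which follows from strong duality $p^*_s = m^*_s$ together with Theorem \ref{thm:stokes_meas_same} giving $m^*_s = m^* = P^*$, and Theorem \ref{thm:slice_no_relaxation} giving $p^* = P^*$), and (ii) the degree-$k$ SOS truncations form a monotone sequence of upper bounds converging to $p^*_s$. Since every feasible point of \eqref{eq:slice_upper_stokes_sos} is, upon dropping $u$ or rather keeping the relaxed constraint, feasible for \eqref{eq:slice_upper_stokes}, one immediately gets $p^*_{s,k} \geq p^*_s = p^*$; and increasing $k$ only enlarges the feasible set, so $p^*_{s,k}$ is nonincreasing in $k$. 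The content of the theorem is therefore the reverse inequality $\limsup_k p^*_{s,k} \leq p^*$.

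First I would invoke Proposition \ref{prop:stokes_valid} and the chain of equalities above to record $p^*_{s,k} \geq p^* $ for all $k$ and monotonicity in $k$. Then, for the upper bound, I would take the approach used in Appendix \ref{app:poly_approx} (the proof of Theorem \ref{thm:sos_indicator_slice}): given $\epsilon > 0$, pick a continuous feasible $(\gamma, w, u)$ for \eqref{eq:slice_upper_stokes} with $\gamma \leq p^* + \epsilon$ — in fact one may simply take the feasible point $(\gamma, w, 0)$ coming from an $\epsilon$-optimal solution of the non-Stokes program \eqref{eq:slice_upper}, exploiting Proposition \ref{prop:stokes_valid}'s construction, so that $u = 0$ and the Stokes inequality constraints \eqref{eq:slice_upper_stokes_sos_ug} are trivially satisfied. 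The remaining task is to perturb $w$ to a polynomial $\tilde w$ of bounded degree that strictly satisfies the SOS-representable versions of the nonnegativity constraints \eqref{eq:slice_upper_stokes_sos_psi}, \eqref{eq:slice_upper_sos_stokes_psi_comp}, with the integrated objective $\Lambda_R \tilde w$ still admitting a degree-$2k$ WSOS certificate against $\gamma + O(\epsilon)$ on the compact set $\Omega_Z$. This is exactly the argument of Appendix \ref{app:poly_approx}: polynomial density in $C(\Psi)$ plus a small additive bump $\epsilon'$ to push strict positivity, then Putinar's Positivstellensatz on the ball-constrained (hence Archimedean) sets $\Psi$, $\Psi_L$, $\Omega_Z$ to obtain finite-degree WSOS certificates once $k$ is large enough. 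The $G$-averaging in \eqref{eq:slice_upper_stokes_sos_psi} poses no obstruction: replacing a feasible $w$ by $[w]_G$ preserves feasibility and the objective (the Reynolds operator is a norm-nonexpansive projection commuting with $\Lambda_R$ on the symmetric part), so one may work within the symmetric subspace throughout.

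The main obstacle I anticipate is handling the Stokes equality/inequality constraints \eqref{eq:slice_upper_stokes_sos_ug} with a \emph{nonzero} $u$ when one wants the Stokes terms genuinely present (rather than the trivial $u=0$ reduction) — but for a \emph{pure convergence} statement this is avoidable, since $u=0$ is always admissible and the theorem only asserts the limit equals $p^*$, not that finite-$k$ bounds strictly improve. Thus the real technical care is the same as in Theorem \ref{thm:sos_indicator_slice}: ensuring that the polynomial approximant $\tilde w$ can be taken with $\tilde w - 1 - [\nabla_y\cdot u]_G$ (with $u=0$, so $\tilde w - 1$) lying in $\Sigma[\Psi_L]_{\leq 2k}$ and $\tilde w \in \Sigma[\Psi]_{\leq 2k}$ simultaneously, which requires a common strict-positivity margin and then a single large enough $k$; the Archimedean property from A2 (the ball constraint on each $L_j$) and the compactness established in Lemma \ref{lem:meas_bounded} supply this. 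I would close by combining: $p^* = p^*_s \leq \liminf_k p^*_{s,k} \leq \limsup_k p^*_{s,k} \leq p^* + O(\epsilon)$ for every $\epsilon$, hence $\lim_k p^*_{s,k} = p^* $ under A1--A4, and remark that the semialgebraic-union and projection extensions follow by the same reasoning applied constraintwise, exactly as after Theorem \ref{thm:sos_indicator_slice}.
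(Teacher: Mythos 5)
Your proposal is correct and takes essentially the same route as the paper: the paper likewise observes that setting $u=0$ makes any feasible point of the non-Stokes truncation \eqref{eq:slice_upper_sos} feasible for \eqref{eq:slice_upper_stokes_sos} (so $p^*_{s,k}\leq p^*_k$) and then invokes Theorem \ref{thm:sos_indicator_slice}, with the lower bound $p^*_{s,k}\geq p^*_s=p^*$ supplied by the strong-duality chain you cite. You are somewhat more explicit than the paper about the sandwich argument and the Reynolds-operator detail, but the underlying idea is identical.
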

\begin{proof}
    Let $(\gamma_k, w_k)$ be a feasible point to the constraints of \eqref{eq:slice_upper_sos}. Similar to the proof of Proposition \ref{prop:stokes_valid}, the point $(\gamma_k, w_k, 0)$ is feasible for the constraints of \eqref{eq:slice_upper_stokes_sos}. Convergence of $\lim_{k \rightarrow \infty} p^*_{s, k} \rightarrow p^*$ 
    is therefore guaranteed by Theorem \ref{thm:sos_indicator_slice}.
\end{proof}

\begin{rem}
    Although the optimal values of \eqref{eq:slice_upper_sos} and \eqref{eq:slice_upper_stokes_sos} will tend towards $p^*$ in increasing degree, there may not be a correspondence between the finite-degree bounds of $p^*_k$ and $p^*_{s, k}$. In practice, the Stokes bounds of \eqref{eq:slice_upper_stokes_sos} decrease much faster than the Gibbs-vulnerable indicator bounds of \eqref{eq:slice_upper_sos}, due to the degree of freedom in $w$ mentioned in Remark \ref{rem:deg_freedom}.
\end{rem}

 \begin{rem}
     The degree of $u$ in \eqref{eq:slice_upper_stokes_sos_u} is restricted to $2k - 2 \max \deg g_i$ in order to ensure that the term $u \nabla_y g_i$ in constraint \eqref{eq:slice_upper_stokes_sos_ug} has degree at most $2k$. This degree doubling occurs  because $\forall i: \deg g_i(\theta t + Z y) = 2 \deg g_i(x)$. The variable-reduction method in dimension 3 from Section \ref{sec:dim3} results in a tripling of degree, because the substitution in \eqref{eq:supp_psi_3} results in cubic terms from $Z y = [b, \theta \times b] y$. When only translation is considered (with fixed $\theta$ and $Z$), the constraint degree stays the same ($\forall i: \deg g_i(\theta t + Z y) = \deg g_i(x)$), and $u$ can be chosen to be a polynomial of degree $2k - \max \deg g_i$.
 \end{rem}
\section{Numerical Examples}

\label{sec:examples}


Julia code to generate all experiments is available at \url{https://github.com/Jarmill/slice_volume}. \acp{SDP} were synthesized using Correlative-Term Sparsity (CS-TSSOS) \cite{wang2022cs}, modeled using \texttt{JuMP} \cite{Lubin2023jump}, and solved using Mosek 10.1 \cite{mosek92}.

\subsection{Simple Rectangle}

We begin with a centered 2-dimensional rectangle $X = [-0.5, 0.5] \times [-0.7 \times 0.7]$ inside the ball with $R = 1$, represented by $\{x_1 \geq -0.5, x_1 \leq 0.5, x_2 \geq -0.7, x_2 \leq 0.7\}$.  The maximal slice-plane through $X$ is the line between $(-0.5, -0.7)$ and $(0.5, 0.7)$, which has length (slice-volume) $\sqrt{0.5^2 + 0.7^2} = 2\sqrt{74/100}  \approx 1.7205.$ This maximal slice-volume occurs with $\theta = \pm (0.7, -0.5)/ \sqrt{74/100} \approx \pm (0.8137, -0.5812)$ and $t=0$. Table \ref{tab:centered_rectangle} solves the slice-volume problems \eqref{eq:slice_upper_sos} and \eqref{eq:slice_upper_stokes} for the centered rectangle. All results in this section use the explicit $Z$ substitution in \eqref{eq:Z_division2}. 

\begin{table}[h]
    \centering
    \caption{Centered rectangle  without translation ($P^* = 1.7205$)}
    \begin{tabular}{r|c c c c c c }
         Order & 1 & 2& 3& 4 & 5  & 6   \\
         Indicator \eqref{eq:slice_upper_sos}&  2.0 & 2.0 & 2.0 & 1.9980  &1.9309 & 1.9034 \\
         Stokes \eqref{eq:slice_upper_stokes}&  2.0 & 1.7700 & 1.7241 & 1.7209 & 1.7205 & 1.7205
    \end{tabular}    
    \label{tab:centered_rectangle}
\end{table}

We now consider a perturbation of $X$ by an offset of $(0.1, 0)$, resulting in the set $X_{\text{off}} = [-0.4, 0.6] \times [-0.7, 0.7]$. The maximal slice-volume without translation $(t=0)$ occurs at $\theta = \pm (-0.7, 0.4)/\sqrt{13/20} \approx \pm (-0.8682, 0.4961,)$ with a length of $P^*_{\text{off}} = \sqrt{13/5} \approx  1.6125$. Slice-bounds for the offset rectangle are listed in Table \ref{tab:rect_offset}.

\begin{table}[h]
    \centering
    \caption{Offset rectangle  without translation ($P^* =  1.6125$)}
    \begin{tabular}{r|c c c c c c }
         Order & 1 & 2& 3& 4 & 5  & 6   \\
         Indicator \eqref{eq:slice_upper_sos}&   2.0 & 2.0 & 2.0  &  1.9435 &  1.8905 &  1.8443\\
         Stokes \eqref{eq:slice_upper_stokes}&  2.0  & 1.7669   & 1.6298 &   1.6130 &  1.6126 & 1.6125 
    \end{tabular}    
    \label{tab:rect_offset}
\end{table}

Table \ref{tab:rect_offset_translate} contains bounds for the slice-volume problem with translation $t \in [-1, 1]$. The optimum slice-volume of $P^* = 1.7205$ will occur with $\theta = \pm (0.7, -0.5)/ \sqrt{74/100} \approx \pm (0.8137, -0.5812)$ and $t=-7/(10\sqrt{74}) \approx  0.0814$.

\begin{table}[h]
    \centering
    \caption{Offset rectangle  with translation  ($P^* = 1.7205$)}
    \begin{tabular}{r|c c c c c c }
         Order & 1 & 2& 3& 4 & 5  & 6   \\
         Indicator \eqref{eq:slice_upper_sos}&   2.0 & 2.0 & 2.0  &  1.9654 &  1.9430 &  1.8928\\
         Stokes \eqref{eq:slice_upper_stokes}&  2.0  & 2.0 & 1.9184 &   1.7422 &  1.7201 & 1.7205 
    \end{tabular}    
    \label{tab:rect_offset_translate}
\end{table}

Figure \ref{fig:rectangle_translate_6} plots the function $q(\theta, t) = \int_{B^{n-1}_1} w(\theta, t, y) dy$ found by solving the Stokes system \eqref{eq:slice_upper_stokes} at degree $k=6$ ($\deg q = 12$).

\begin{figure}[!h]
    \centering
    \includegraphics[width=0.8\linewidth]{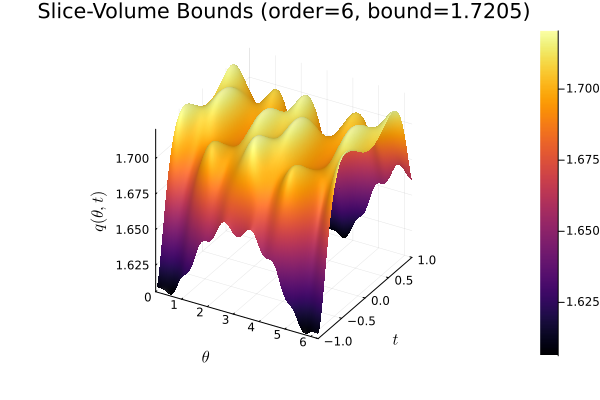}
    \caption{Slicing auxiliary function for offset rectangle in Table \ref{tab:rect_offset_translate}}
    \label{fig:rectangle_translate_6}
\end{figure}

\subsection{Double-Lobe example}

We consider a 2-dimensional Double-Lobe \ac{BSA} set $X$ formed by
\begin{align}
    f(x_1, x_2) &= (x_1)^4 + (x_2)^4 - 3 x_1^2 - x_1 x_2^2 - x_2 + 1 \\
    X &= \{x \in \R^2 \mid f(2.25x_1, 2.25 x_2) \leq 0 \}. \label{eq:double_lobe}
\end{align}

The scaling factor of 2.25 in \eqref{eq:double_lobe} ensures that $X \subset B_{R=1}^2$. The set $X$ is the interior of the nonconvex blue inner regions in Figure \ref{fig:double_lobe}, while the black circle is the boundary of $B_{R=1}^2$.

\begin{figure}[!h]
    \centering
    \includegraphics[width=0.4\linewidth]{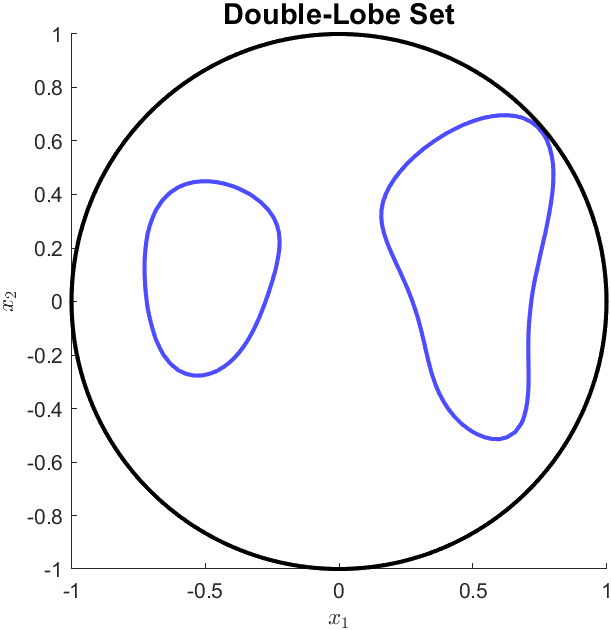}
    \caption{Double-lobe from \eqref{eq:double_lobe}}
    \label{fig:double_lobe}
\end{figure}

\begin{table}[h]
    \centering
    \caption{Double-Lobe without translation}
    \begin{tabular}{r|c c c c c c }
         Order & 1 & 2& 3& 4 & 5  & 6   \\
         Indicator \eqref{eq:slice_upper_sos} &  2.0  &2.0 & 1.9118 &  1.9044 &  1.8270 & 1.5324  \\
         Stokes \eqref{eq:slice_upper_stokes} &  2.0 & 2.0 & 1.6322 & 1.5649 & 1.5256  & 1.0959   
    \end{tabular}    
    \label{tab:double_lobe}
\end{table}

\begin{table}[!h]
    \centering
    \caption{Double-Lobe with translation}
    \begin{tabular}{r|c c c c c c }
         Order & 1 & 2& 3& 4 & 5  & 6   \\
         Indicator \eqref{eq:slice_upper_sos}&  2.0 & 2.0  &1.9910 &   1.9833 &   1.8608 &   1.8294  \\
         Stokes \eqref{eq:slice_upper_stokes}&   2.0 &  2.0 & 1.9799  & 1.6123 &   1.5409 & 1.4728  
    \end{tabular}    
    \label{tab:double_lobe_trans}
\end{table}

\subsection{Double Ellipsoid Cut}


This experiment involves a three-dimensional region formed by the exclusion of two ellipsoids. This example is performed with $R = 1$, such that $L$ has the constraint-definition of:
\begin{align}
    L = \begin{Bmatrix}
        x \in \R^3 \mid \begin{array}{l}
             1 - \norm{x}_2^2 \geq 0  \\
             x_1^2 + 4 x_2^2 + 16 x_3^2 - 1 \geq  0 \\
             3.25 x_1^2 -  1.6700 x_1 x_2 - 1.9902 x_1 x_3 + 3.0703 x_2^2 - 2.2158 x_2 x_3 + 2.6796 x_3^2 - 1 \geq 0 
        \end{array} \label{eq:double_ellipse_cut}
    \end{Bmatrix}.
\end{align}

The geometry of the region $L$ is shown in Figure \ref{fig:double_ellipse_cut}. The set $L$ is inside the gray sphere and outside the two ellipsoids.

\begin{figure}
    \centering
    \includegraphics{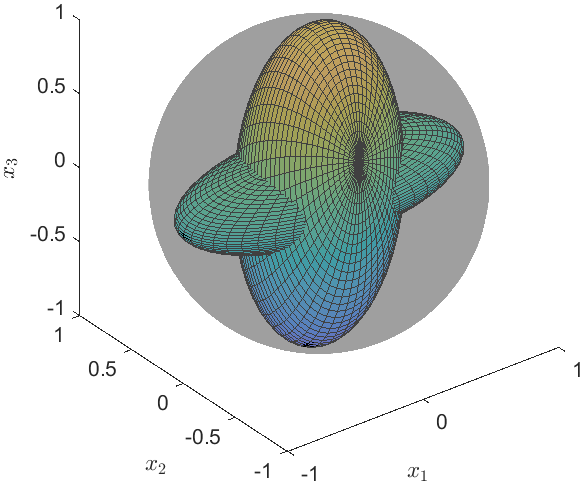}
    \caption{Visualization of double-ellipse-cut region in \eqref{eq:double_ellipse_cut}}
    \label{fig:double_ellipse_cut}
\end{figure}

Table \ref{tab:double_cut_trans} reports on slice-volume bounds for the double-ellipse-cut region in \eqref{eq:double_ellipse_cut} in which only translation in the direction $\theta = [1; 0; 0]$ is allowed (with $[\theta, Z]$ equal to the Identity matrix). The polynomials $(w, u)$ depend on the 3 variables $(t, y_1, y_2)$.

\begin{table}[!h]
    \centering
    \caption{Double-Ellipse-Cut with only translation}
    \begin{tabular}{r|c c c c c c }
         Order & 4 & 5 & 6 & 7& 8& 9   \\
         Indicator \eqref{eq:slice_upper_sos} & 3.1029 & 2.9132 & 2.8556 & 2.7739 & 2.6986 & 2.6653   \\
         Stokes \eqref{eq:slice_upper_stokes}&  2.8013 & 2.5739 & 2.2680 & 2.1333 & 2.1261 & 2.0814  
    \end{tabular}    
    \label{tab:double_cut_trans}
\end{table}

Upper-bounds of the maximal slice-volume under pure rotation for the double-ellipse-cut set at orders $k\in \{1, 2, 3, 4\}$ have values of $\pi$ (with and without Stokes constraints). Computational limitations on the experimental platform prevent the generation and solutions of \acp{SDP} \eqref{eq:slice_upper_sos} and \eqref{eq:slice_upper_stokes} at order $k=4$.


\section{Extensions}
\label{sec:extensions}

This section will detail extensions of the developed slice-volume program to other problems.

\subsection{Radon Transform Maximization}

Consider a (lower-semicontinuous) function $f(x)$ that is compactly supported over the ball $B_R^n$. The Radon-Transform supremization problem for $f$ is 
    \begin{prob}
\label{prob:radon}
Find a direction $\theta$ and an affine offset $t$ to maximize \begin{align}
    F^* =& \sup_{\theta, t} \mathcal{R} f(\theta, t) \label{eq:radon_max}\\
    & \theta \in S^n, \ t \in [-R, R]. \nonumber 
\end{align}
\end{prob}

By using the support-set definition for $\Omega$ from \eqref{eq:supp_omega} and the method from Problem \eqref{eq:slice_cont_std}, we can formulate an \ac{LP} to upper-bound \eqref{eq:radon_max} as
\begin{subequations}
\label{eq:radon_upper}
\begin{align}
    f^* = &\inf_{\gamma \in \R} \quad \gamma \\
    & \gamma \geq  \Lambda_R w(\theta, t, Z) & & \forall (\theta, t) \in \Omega_Z  \label{eq:radon_upper_int}\\
    & w(\theta, t, Z, y) \geq f(\theta t + Z y) & & \forall (\theta, t, y, Z) \in \Psi \label{eq:radon_upper_psi}\\    
    & w(\theta, t, Z, y) \in C(\Psi) \label{eq:radon_upper_w}.
\end{align}
\end{subequations}

Note that constraint \eqref{eq:radon_upper_psi} is equivalent to the pair of \eqref{eq:slice_upper_psi} and \eqref{eq:slice_upper_psi_comp} when $f = I_L$.

\subsection{Radon Transform Approximation}

The program in \eqref{eq:slice_cont_std} aims to find the supremal slice volume of a set $L \subseteq B_R^n$. The produced auxiliary function $w$ produces an upper-bound $\int_{B_R^n} w(\theta, t, y) dy$ on the Radon transform $\mathcal{R} I_L$ (Theorem \ref{thm:slice_meas_upper_bound}), but this tightness only needs to be satisfied at the critical $(\theta^*, t^*)$. 
We can also seek to develop a function that is as close to $\mathcal{R} I_L$ as possible in an $L_1$ sense, thus producing an outer-approximation for the Radon Transform.

\begin{subequations}
\label{eq:radon_slice_upper}
\begin{align}
    J^* = &\inf \int_{O(n)} \int_{[-R, R]} \int_{B_{R}^{n-1} } w_R(\theta, t, Z, y) dy \, dt \, (d\theta \, dZ) \label{eq:radon_slice_upper_obj}\\
    & w_R(\theta, t, Z, y) \geq 1 & & \forall (\theta, t, y, Z) \in \Psi_L \label{eq:slice_radon_upper_psi}\\
    & w_R(\theta, t, Z, y) \geq 0 & &\forall (\theta, t, Z, y) \in \Psi \\
    & w_R(\theta, t, Z, y) \in C(\Psi).
\end{align}
\end{subequations}

The objective in \eqref{eq:radon_slice_upper} integrates over all variables $(\theta, t, y, Z)$ to develop a maximally-tight approximation.

Problem \eqref{eq:radon_slice_upper} is closely linked to the study of intersection bodies. \cite{gardner1994intersection, Berlow_2022}.
The intersection body $K$ of a star-shaped set $L$ containing the origin is the unique set such that
\begin{align}
    K &= \{y \in \R^n \mid \norm{y}_2 \leq \text{vol}((y \cdot x = 0)  \cap L)\}. \label{eq:intersection_std}
\end{align}

Equivalently, the radial function of the set $K$ is the slice-volume at normal $y/\norm{y}_2$.
Any degree-$k$ moment-\ac{SOS} truncation of \eqref{eq:radon_slice_upper}  with polynomial $w_R \in \R[\theta, t, y]_{\leq 2k}$ and $q_R = \int_{B_{R}^{n-1} } w_R(\theta, t, y) dy$ will satisfy
\begin{align}
     K \subseteq \{\theta t \mid (\theta, t) \in S^n \times [-R,R] : t \leq q(\theta, 0)\},
\end{align} 
thus outer-approximating the intersection body.
\section{Conclusion}

\label{sec:conclusion}

This work presented the slice-volume problem for compact semialgebraic sets. The slice-volume program was posed the infinite-dimensional primal-dual pair of \acp{LP} \eqref{eq:slice_upper} and \eqref{eq:slice_upper_meas}. These \acp{LP} have no relaxation gap to the original slice-volume task when a bound on $R$ is known (A1). The \acp{LP} can be approximated by the moment-\ac{SOS} hierarchy of \ac{SDP}, with convergence as the degree $k$ tends to infinity. The complexity of these large-scale \acp{SDP} were reduced by applying symmetry, algebraic structure, and topological properties (assignment of $Z$). Stokes constraints for volume computation were incorporated into the slice-volume framework, and the improved convergence of stokes constraints was empirically demonstrated in experiments.

Future work includes determining the convergence rate of the slice-volume scheme, providing conditions under which the infimal Stokes program \eqref{eq:slice_upper_stokes} achieves a minimum, applying SAGBI bases \cite{robbiano2006subalgebra, stillman1999using} to the continuous symmetry reductions in Section \ref{sec:symmetry_continuous},  and finding sets under which slice-volume computation has a simpler structure (such as the polytopes in \cite{brandenburg2023slice}). Another research direction that this framework can be extended towards \ac{SOS}-based analysis of more general integral transformations (such as the X-ray transform \cite{helgason1980radon}). The computational complexity analysis in Sections \ref{sec:complexity} and \ref{sec:reduce_complex} consider an \ac{SDP} framework for polynomial optimization. Further complexity reduction could be obtained with the development of more efficient \ac{SDP} solvers, or by utilizing non-\ac{SDP} optimization strategies  \cite{papp2019sum, cristancho2022harmonic} to solve the slice-volume problem. Other investigations include formulating and implementing these non-\ac{SOS} algorithms for slice-volume optimization, such as by modifying the real-algebraic algorithm from \cite{lairez2019computing}.



\section*{Acknowledgements}

The authors would like to thank Jes\'{u}s de Loera, organizers and participants of the Discrete Optimization semester at ICERM (Jan.-May 2023), Mohab Safey el-Din,  Jie Wang, Mario Sznaier, Roy S. Smith, and the POP group of LAAS-CNRS.


\bibliographystyle{IEEEtran}
\bibliography{references.bib}

\begin{thebibliography}{10}
\providecommand{\url}[1]{#1}
\csname url@samestyle\endcsname
\providecommand{\newblock}{\relax}
\providecommand{\bibinfo}[2]{#2}
\providecommand{\BIBentrySTDinterwordspacing}{\spaceskip=0pt\relax}
\providecommand{\BIBentryALTinterwordstretchfactor}{4}
\providecommand{\BIBentryALTinterwordspacing}{\spaceskip=\fontdimen2\font plus
\BIBentryALTinterwordstretchfactor\fontdimen3\font minus
  \fontdimen4\font\relax}
\providecommand{\BIBforeignlanguage}[2]{{%
\expandafter\ifx\csname l@#1\endcsname\relax
\typeout{** WARNING: IEEEtran.bst: No hyphenation pattern has been}%
\typeout{** loaded for the language `#1'. Using the pattern for}%
\typeout{** the default language instead.}%
\else
\language=\csname l@#1\endcsname
\fi
#2}}
\providecommand{\BIBdecl}{\relax}
\BIBdecl

\bibitem{helgason1980radon}
S.~Helgason and S.~Helgason, \emph{The Radon Transform}.\hskip 1em plus 0.5em
  minus 0.4em\relax Springer, 1980, vol.~2.

\bibitem{Gardnerbook}
R.~J. Gardner, \emph{Geometric Tomography}, 2nd~ed., ser. Encyclopedia of
  Mathematics and its Applications.\hskip 1em plus 0.5em minus 0.4em\relax
  Cambridge University Press, New York, 2006, vol.~58.

\bibitem{Gardner19941}
\BIBentryALTinterwordspacing
------, ``Intersection bodies and the {B}usemann-{P}etty problem,''
  \emph{Trans. Amer. Math. Soc.}, vol. 342, no.~1, pp. 435--445, 1994.
  [Online]. Available: \url{https://doi.org/10.2307/2154703}
\BIBentrySTDinterwordspacing

\bibitem{Gardner19942}
\BIBentryALTinterwordspacing
------, ``A positive answer to the {B}usemann-{P}etty problem in three
  dimensions,'' \emph{Ann. of Math. (2)}, vol. 140, no.~2, pp. 435--447, 1994.
  [Online]. Available: \url{https://doi.org/10.2307/2118606}
\BIBentrySTDinterwordspacing

\bibitem{Koldobsky1998}
A.~Koldobsky, ``Intersection bodies, positive definite distributions, and the
  {B}usemann-{P}etty problem,'' \emph{Amer. J. Math.}, vol. 120, no.~4, pp.
  827--840, 1998.

\bibitem{GKS1999}
\BIBentryALTinterwordspacing
R.~J. Gardner, A.~Koldobsky, and T.~Schlumprecht, ``An analytic solution to the
  {B}usemann-{P}etty problem on sections of convex bodies,'' \emph{Ann. of
  Math. (2)}, vol. 149, no.~2, pp. 691--703, 1999. [Online]. Available:
  \url{https://doi.org/10.2307/120978}
\BIBentrySTDinterwordspacing

\bibitem{Zhang1999}
\BIBentryALTinterwordspacing
G.~Zhang, ``A positive solution to the {B}usemann-{P}etty problem in
  {$\mathbb{R}^4$},'' \emph{Ann. of Math. (2)}, vol. 149, no.~2, pp. 535--543,
  1999. [Online]. Available: \url{https://doi.org/10.2307/120974}
\BIBentrySTDinterwordspacing

\bibitem{KlaMil22:SlicingProblem}
B.~Klartag and V.~Milman, ``{The Slicing Problem by Bourgain},'' in
  \emph{Analysis at Large: Dedicated to the Life and Work of Jean Bourgain},
  A.~Avila, M.~T. Rassias, and Y.~Sinai, Eds.\hskip 1em plus 0.5em minus
  0.4em\relax Cham: Springer International Publishing, 2022, pp. 203--231.

\bibitem{klartagLehec2022bourgains}
B.~Klartag and J.~Lehec, ``Bourgain's slicing problem and {KLS} isoperimetry up
  to polylog,'' \emph{Geom. Funct. Anal.}, vol.~32, no.~5, pp. 1134--1159,
  2022.

\bibitem{Klartag:LogBoundBourgain}
B.~Klartag, ``Logarithmic bounds for isoperimetry and slices of convex sets,''
  2023.

\bibitem{GiannopoulosKoldobskyZvavitch2023}
A.~Giannopoulos, A.~Koldobsky, and A.~Zvavitch, ``Inequalities for sections and
  projections of convex bodies,'' 2023.

\bibitem{webb}
S.~Webb, ``Central slices of the regular simplex,'' \emph{Geom. Dedicata},
  vol.~61, no.~1, pp. 19--28, 1996.

\bibitem{Koenig2021}
H.~K{\"{o}}nig, ``Non-central sections of the simplex, the cross-polytope and
  the cube,'' \emph{Adv. Math.}, vol. 376, pp. Paper No. 107\,458, 35, 2021.

\bibitem{Walkup68:Simplex5D}
D.~W. Walkup, ``A simplex with a large cross section,'' \emph{Amer. Math.
  Monthly}, vol.~75, pp. 34--36, 1968.

\bibitem{Kball1}
K.~Ball, ``Cube slicing in {${\mathbb R}^n$},'' \emph{Proc. Amer. Math. Soc.},
  vol.~97, no.~3, pp. 465--473, 1986.

\bibitem{Kball2}
------, ``Volumes of sections of cubes and related problems,'' in
  \emph{Geometric aspects of functional analysis (1987--88)}, ser. Lecture
  Notes in Math.\hskip 1em plus 0.5em minus 0.4em\relax Springer, Berlin, 1989,
  vol. 1376, pp. 251--260.

\bibitem{Lawrence79}
J.~Lawrence, ``Cutting the {$d$}-cube,'' \emph{J. Res. Nat. Bur. Standards},
  vol.~84, no.~1, pp. 51--53 (1978), 1979.

\bibitem{pournin-scubesections}
L.~Pournin, ``Shallow sections of the hypercube,'' \emph{Israel Journal of
  Mathematics}, 11 2022.

\bibitem{fukudaMutoGoto97-5cubeslices}
H.~Fukuda, N.~Muto, K.~Goto, and G.~Nakamura, ``Sections of hyper-cube in five
  dimensions,'' \emph{Forma}, vol.~12, no.~1, pp. 15--33, 1997.

\bibitem{Moodyetal2013}
J.~Moody, C.~Stone, D.~Zach, and A.~Zvavitch, ``A remark on the extremal
  non-central sections of the unit cube,'' in \emph{Asymptotic geometric
  analysis}, ser. Fields Inst. Commun.\hskip 1em plus 0.5em minus 0.4em\relax
  Springer, New York, 2013, vol.~68, pp. 211--228.

\bibitem{Liu+Tkocz}
R.~Liu and T.~Tkocz, ``A note on the extremal non-central sections of the
  cross-polytope,'' \emph{Adv. in Appl. Math.}, vol. 118, pp. 102\,031, 17,
  2020.

\bibitem{MeyerPajor88}
M.~Meyer and A.~Pajor, ``Sections of the unit ball of {$L^n_p$},'' \emph{J.
  Funct. Anal.}, vol.~80, no.~1, pp. 109--123, 1988.

\bibitem{Khovanskii-slices2006}
A.~Khovanskii, ``Combinatorics of sections of polytopes and {C}oxeter groups in
  {L}obachevsky spaces,'' in \emph{The {C}oxeter legacy}.\hskip 1em plus 0.5em
  minus 0.4em\relax Amer. Math. Soc., Providence, RI, 2006, pp. 129--157.

\bibitem{BBMS:IntersectionBodiesPolytopes}
K.~Berlow, M.-C. Brandenburg, C.~Meroni, and I.~Shankar, ``Intersection bodies
  of polytopes,'' \emph{Beitr\"{a}ge zur Algebra und Geometrie. Contributions
  to Algebra and Geometry}, vol.~63, no.~2, pp. 419--439, 2022.

\bibitem{brandenburg2023slice}
M.-C. Brandenburg, J.~A.~D. Loera, and C.~Meroni, ``The best ways to slice a
  polytope,'' 2023.

\bibitem{lovasz2006simulated}
L.~Lov{\'a}sz and S.~Vempala, ``Simulated annealing in convex bodies and an
  o*(n4) volume algorithm,'' \emph{Journal of Computer and System Sciences},
  vol.~72, no.~2, pp. 392--417, 2006.

\bibitem{isaac2023algorithm}
A.~Isaac, A.~Jawlekar, and M.~Venkatapathi, ``An algorithm for estimating
  non-convex volumes and other integrals in n dimensions,'' \emph{Computational
  and Applied Mathematics}, vol.~42, no.~6, pp. 1--17, 2023.

\bibitem{dyer1988complexity}
M.~E. Dyer and A.~M. Frieze, ``On the complexity of computing the volume of a
  polyhedron,'' \emph{SIAM Journal on Computing}, vol.~17, no.~5, pp. 967--974,
  1988.

\bibitem{parrilo2000structured}
P.~A. Parrilo, \emph{Structured semidefinite programs and semialgebraic
  geometry methods in robustness and optimization}.\hskip 1em plus 0.5em minus
  0.4em\relax California Institute of Technology, 2000.

\bibitem{lasserre2009moments}
J.~B. Lasserre, \emph{{Moments, Positive Polynomials And Their}
  {Applications}}, ser. Imperial College Press Optimization Series.\hskip 1em
  plus 0.5em minus 0.4em\relax World Scientific Publishing Company, 2009.

\bibitem{henrion2009approximate}
D.~Henrion, J.~B. Lasserre, and C.~Savorgnan, ``Approximate volume and
  integration for basic semialgebraic sets,'' \emph{SIAM review}, vol.~51,
  no.~4, pp. 722--743, 2009.

\bibitem{tacchi2022exploiting}
M.~Tacchi, T.~Weisser, J.~B. Lasserre, and D.~Henrion, ``Exploiting sparsity
  for semi-algebraic set volume computation,'' \emph{Foundations of
  Computational Mathematics}, pp. 1--49, 2022.

\bibitem{schlosser2024convergence}
C.~Schlosser, M.~Tacchi, and A.~Lazarev, ``Convergence rates for the moment-sos
  hierarchy,'' \emph{arXiv preprint arXiv:2402.00436}, 2024.

\bibitem{korda2018convergence}
M.~Korda and D.~Henrion, ``Convergence rates of moment-sum-of-squares
  hierarchies for volume approximation of semialgebraic sets,''
  \emph{Optimization Letters}, vol.~12, pp. 435--442, 2018.

\bibitem{lasserre2017computing}
J.~B. Lasserre, ``{Computing Gaussian \& exponential measures of semi-algebraic
  sets},'' \emph{Advances in Applied Mathematics}, vol.~91, pp. 137--163, 2017.

\bibitem{tacchi2023stokes}
M.~Tacchi, J.~B. Lasserre, and D.~Henrion, ``{Stokes, Gibbs, and Volume
  Computation of Semi-Algebraic Sets},'' \emph{Discrete \& Computational
  Geometry}, vol.~69, no.~1, pp. 260--283, 2023.

\bibitem{lasserre2020boundary}
J.~B. Lasserre and V.~Magron, ``Computing the {H}ausdorff boundary measure of
  semialgebraic sets,'' \emph{SIAM Journal on Applied Algebra and Geometry},
  vol.~4, no.~3, pp. 441--469, 2020.

\bibitem{Henrion_2014}
D.~Henrion and M.~Korda, ``Convex computation of the region of attraction of
  polynomial control systems,'' \emph{IEEE Trans. Automat. Contr.}, vol.~59,
  no.~2, p. 297–312, Feb 2014.

\bibitem{majumdar2014convex}
A.~Majumdar, R.~Vasudevan, M.~M. Tobenkin, and R.~Tedrake, ``{Convex
  Optimization of Nonlinear Feedback Controllers via Occupation Measures},''
  \emph{The International Journal of Robotics Research}, vol.~33, no.~9, pp.
  1209--1230, 2014.

\bibitem{korda2013inner}
M.~Korda, D.~Henrion, and C.~N. Jones, ``Inner approximations of the region of
  attraction for polynomial dynamical systems,'' \emph{IFAC Proceedings
  Volumes}, vol.~46, no.~23, pp. 534--539, 2013.

\bibitem{oustry2019inner}
A.~Oustry, M.~Tacchi, and D.~Henrion, ``Inner approximations of the maximal
  positively invariant set for polynomial dynamical systems,'' \emph{IEEE
  Control Systems Letters}, vol.~3, no.~3, pp. 733--738, 2019.

\bibitem{korda2014convex}
M.~Korda, D.~Henrion, and C.~N. Jones, ``Convex computation of the maximum
  controlled invariant set for polynomial control systems,'' \emph{SIAM Journal
  on Control and Optimization}, vol.~52, no.~5, pp. 2944--2969, 2014.

\bibitem{schlosser2021converging}
C.~Schlosser and M.~Korda, ``Converging outer approximations to global
  attractors using semidefinite programming,'' \emph{Automatica}, vol. 134, p.
  109900, 2021.

\bibitem{lairez2019computing}
P.~Lairez, M.~Mezzarobba, and M.~Safey El~Din, ``Computing the volume of
  compact semi-algebraic sets,'' in \emph{Proceedings of the 2019 on
  International Symposium on Symbolic and Algebraic Computation}, 2019, pp.
  259--266.

\bibitem{oaku2013algorithms}
T.~Oaku, ``Algorithms for integrals of holonomic functions over domains defined
  by polynomial inequalities,'' \emph{Journal of Symbolic Computation},
  vol.~50, pp. 1--27, 2013.

\bibitem{barvinok2002convex}
A.~Barvinok, \emph{A Course in Convexity}.\hskip 1em plus 0.5em minus
  0.4em\relax American Mathematical Society, 2002.

\bibitem{tao2011introduction}
T.~Tao, \emph{{An Introduction to Measure Theory}}.\hskip 1em plus 0.5em minus
  0.4em\relax American Mathematical Society Providence, RI, 2011, vol. 126.

\bibitem{hilbert1888darstellung}
D.~Hilbert, ``{{\"U}ber die Darstellung definiter Formen als Summe von
  Formenquadraten},'' \emph{Mathematische Annalen}, vol.~32, no.~3, pp.
  342--350, 1888.

\bibitem{putinar1993compact}
M.~Putinar, ``{Positive Polynomials on Compact Semi-algebraic Sets},''
  \emph{Indiana University Mathematics Journal}, vol.~42, no.~3, pp. 969--984,
  1993.

\bibitem{nie2007complexity}
J.~Nie and M.~Schweighofer, ``{On the complexity of Putinar's
  Positivstellensatz},'' \emph{Journal of Complexity}, vol.~23, no.~1, pp.
  135--150, 2007.

\bibitem{cimpric2011closures}
J.~Cimpri{\v{c}}, M.~Marshall, and T.~Netzer, ``Closures of quadratic
  modules,'' \emph{Israel Journal of Mathematics}, vol. 183, no.~1, pp.
  445--474, 2011.

\bibitem{guthrie2022inner}
J.~Guthrie, ``Inner and outer approximations of star-convex semialgebraic
  sets,'' \emph{IEEE Control Systems Letters}, vol.~7, pp. 61--66, 2022.

\bibitem{gottlieb1997gibbs}
D.~Gottlieb and C.-W. Shu, ``{On the Gibbs phenomenon and its resolution},''
  \emph{SIAM review}, vol.~39, no.~4, pp. 644--668, 1997.

\bibitem{ash1972real}
R.~B. Ash, \emph{Real analysis and probability: probability and mathematical
  statistics: a series of monographs and textbooks}.\hskip 1em plus 0.5em minus
  0.4em\relax Academic press, 1972.

\bibitem{henrion2011inner}
D.~Henrion and J.-B. Lasserre, ``{Inner Approximations for Polynomial Matrix
  Inequalities and Robust Stability Regions},'' \emph{IEEE Transactions on
  Automatic Control}, vol.~57, no.~6, pp. 1456--1467, 2011.

\bibitem{lasserre2001solving}
J.~B. Lasserre and E.~S. Zeron, ``Solving a class of multivariate integration
  problems via laplace techniques,'' \emph{Applicationes Mathematicae},
  vol.~28, pp. 391--405, 2001.

\bibitem{henrion2005detecting2}
D.~Henrion and J.-B. Lasserre, ``Detecting global optimality and extracting
  solutions in gloptipoly,'' in \emph{Positive polynomials in control}.\hskip
  1em plus 0.5em minus 0.4em\relax Springer, 2005, pp. 293--310.

\bibitem{miller2022eiv_short}
J.~Miller, T.~Dai, and M.~Sznaier, ``{Data-Driven Superstabilizing Control of
  Error-in-Variables Discrete-Time Linear Systems},'' in \emph{2022 61st IEEE
  Conference on Decision and Control (CDC)}, 2022, pp. 4924--4929.

\bibitem{gatermann2004symmetry}
K.~Gatermann and P.~A. Parrilo, ``Symmetry groups, semidefinite programs, and
  sums of squares,'' \emph{Journal of Pure and Applied Algebra}, vol. 192, no.
  1-3, pp. 95--128, 2004.

\bibitem{lofberg2009pre}
J.~Lofberg, ``Pre-and post-processing sum-of-squares programs in practice,''
  \emph{IEEE transactions on automatic control}, vol.~54, no.~5, pp.
  1007--1011, 2009.

\bibitem{cox2013ideals}
D.~Cox, J.~Little, and D.~OShea, \emph{Ideals, varieties, and algorithms: an
  introduction to computational algebraic geometry and commutative
  algebra}.\hskip 1em plus 0.5em minus 0.4em\relax Springer Science \& Business
  Media, 2013.

\bibitem{parrilo2003exploiting}
P.~A. Parrilo, ``Exploiting structure in sum of squares programs,'' in
  \emph{42nd IEEE International Conference on Decision and Control (IEEE Cat.
  No. 03CH37475)}, vol.~5.\hskip 1em plus 0.5em minus 0.4em\relax IEEE, 2003,
  pp. 4664--4669.

\bibitem{parrilo2005exploiting}
------, ``Exploiting algebraic structure in sum of squares programs,'' in
  \emph{Positive polynomials in control}.\hskip 1em plus 0.5em minus
  0.4em\relax Springer, 2005, pp. 181--194.

\bibitem{bott1958spheres}
R.~Bott and J.~Milnor, ``{On the parallelizability of the spheres},''
  \emph{Bulletin of the American Mathematical Society}, vol.~64, no. 3.P1, pp.
  87 -- 89, 1958.

\bibitem{whitney2012geometric}
H.~Whitney, \emph{Geometric Integration Theory}.\hskip 1em plus 0.5em minus
  0.4em\relax Courier Corporation, 2012.

\bibitem{harrison1999flux}
J.~Harrison, ``{Flux across nonsmooth boundaries and fractal
  Gauss/Green/Stokes' theorems},'' \emph{Journal of Physics A: Mathematical and
  General}, vol.~32, no.~28, p. 5317, 1999.

\bibitem{tacchi2021thesis}
M.~Tacchi, ``Moment-{SOS} hierarchy for large scale set approximation.
  application to power systems transient stability analysis,'' Ph.D.
  dissertation, Toulouse, INSA, 2021.

\bibitem{riener2013exploiting}
C.~Riener, T.~Theobald, L.~J. Andr{\'e}n, and J.~B. Lasserre, ``Exploiting
  symmetries in sdp-relaxations for polynomial optimization,''
  \emph{Mathematics of Operations Research}, vol.~38, no.~1, pp. 122--141,
  2013.

\bibitem{wang2022cs}
J.~Wang, V.~Magron, J.~B. Lasserre, and N.~H.~A. Mai, ``{CS-TSSOS: Correlative
  and term sparsity for large-scale polynomial optimization},'' \emph{ACM
  Transactions on Mathematical Software}, vol.~48, no.~4, pp. 1--26, 2022.

\bibitem{Lubin2023jump}
M.~Lubin, O.~Dowson, J.~D. Garcia, J.~Huchette, B.~Legat, and J.~P. Vielma,
  ``{JuMP 1.0: Recent improvements to a modeling language for mathematical
  optimization},'' \emph{Mathematical Programming Computation}, 2023.

\bibitem{mosek92}
M.~ApS, \emph{The MOSEK optimization toolbox for MATLAB manual. Version 10.1.},
  2023.

\bibitem{gardner1994intersection}
R.~J. Gardner, ``{Intersection bodies and the Busemann-Petty problem},''
  \emph{Transactions of the American Mathematical Society}, vol. 342, no.~1,
  pp. 435--445, 1994.

\bibitem{Berlow_2022}
K.~Berlow, M.-C. Brandenburg, C.~Meroni, and I.~Shankar, ``Intersection bodies
  of polytopes,'' \emph{Beiträge zur Algebra und Geometrie / Contributions to
  Algebra and Geometry}, vol.~63, no.~2, pp. 419--439, jan 2022.

\bibitem{robbiano2006subalgebra}
L.~Robbiano and M.~Sweedler, ``Subalgebra bases,'' in \emph{Commutative
  Algebra: Proceedings of a Workshop held in Salvador, Brazil, Aug. 8--17,
  1988}.\hskip 1em plus 0.5em minus 0.4em\relax Springer, 2006, pp. 61--87.

\bibitem{stillman1999using}
M.~Stillman and H.~Tsai, ``Using sagbi bases to compute invariants,''
  \emph{Journal of Pure and Applied Algebra}, vol. 139, no. 1-3, pp. 285--302,
  1999.

\bibitem{papp2019sum}
D.~Papp and S.~Yildiz, ``{Sum-of-Squares Optimization without Semidefinite
  Programming},'' \emph{SIAM Journal on Optimization}, vol.~29, no.~1, pp.
  822--851, 2019.

\bibitem{cristancho2022harmonic}
S.~Cristancho and M.~Velasco, ``Harmonic hierarchies for polynomial
  optimization,'' \emph{arXiv preprint arXiv:2202.12865}, 2022.

\end{thebibliography}

\appendix
\section{Strong Duality of Indicator Slice-Volume}

\label{app:duality}

This proof of strong duality between \eqref{eq:slice_upper} and \eqref{eq:slice_upper_meas} will use notation conventions from Theorem 2.6 of  \cite{tacchi2021thesis}.

\subsection{Weak Duality}

The variables of \eqref{eq:slice_upper} and \eqref{eq:slice_upper_meas} may be expressed as 
\begin{align}
  \bell &= (w, \gamma) \\
  \bbmu &= (\mu_0, \mu, \hat{\mu}). \label{eq:bell}
\end{align}

The residing space of $\bell$ is
\begin{align}
    \mathcal{Y}' &= C(\Psi) \times \R \\
    \mathcal{Y} &= \mathcal{M}(\Psi) \times \R.
\end{align}

The spaces for $\bbmu$ are
\begin{align}
    \mathcal{X}' &=  C(\Omega_Z) \times C(\Psi_L) \times  C(\Psi) \label{eq:dual_spaces}\\
    \mathcal{X} &= \mathcal{M}(\Omega_Z) \times \mathcal{M}(\Psi_L) \times \mathcal{M}(\Psi),\nonumber
\end{align}
with nonnegative subcones of
\begin{align}
    \mathcal{X}'_+ &=  C_+(\Omega_Z) \times C_+(\Psi_L) \times  C_+(\Psi) \label{eq:dual_cones}\\
    \mathcal{X}_+ &= \mathcal{M}_+(\Omega_Z) \times \mathcal{M}_+(\Psi_L) \times \mathcal{M}_+(\Psi).\nonumber
\end{align}

We will also follow convention with \cite{tacchi2021thesis} and refer to $\mathcal{Y}'_+ = \mathcal{Y}'$ and $\mathcal{Y}_+ = \mathcal{Y}$, because there are conic inequalities present in the affine constraints. The feasible sets of \eqref{eq:slice_upper} and \eqref{eq:slice_upper_meas} are $\bell \in \mathcal{Y}'$ and $\bbmu \in \mathcal{X}_+$ respectively. Additionally, the compactness assumption A1 ensures that $(\mathcal{X}_+, \mathcal{X}_+')$ form a pair of topological dual spaces. The set $\mathcal{Y}'$ is equipped with the sup-norm-bounded topology, and the set $\mathcal{X}$ possesses the weak-* topology.

An affine map $\A$ with adjoint $\A'$ may be defined from the constraints of \eqref{eq:slice_upper_meas} as
\begin{align}
    \mathcal{A}(\bbmu) &= [\mu_0 \otimes \lambda_R^{n-1} - \mu - \hat{\mu}, \ \inp{1}{\mu_0}] \\
    \mathcal{A}'(\bell) &= [\gamma - \textstyle \Lambda_R w(\theta, t, y), w, w].\nonumber
\end{align}

The cost and constraint terms of \eqref{eq:slice_upper_meas} can be expressed as
\begin{subequations}
\label{eq:cost_constraint}
    \begin{align}
        \mathbf{b} &= [0, \ 1, \ 0] \\
        \mathbf{c} &= [0, \  0, \ 1].
    \end{align}
\end{subequations}

Pairing of the data in \eqref{eq:cost_constraint} with the variables in \eqref{eq:bell} yield the objectives of \eqref{eq:slice_upper} and \eqref{eq:slice_upper_meas}:
\begin{subequations}
\begin{align}
    \inp{\boldsymbol{\ell}}{\mathbf{b}} &= \gamma \\
    \inp{\mathbf{c}}{\boldsymbol{\mu}} &= \inp{1}{\mu}.
\end{align}
\end{subequations}

A standard-form expression for \eqref{eq:slice_upper} is
\begin{align}
        p^* = &\inf_{\boldsymbol{\ell} \in \mathcal{Y}'_+} \inp{\boldsymbol{\ell}}{\mathbf{b}}
    & &\A'(\boldsymbol{\ell}) - \mathbf{c} \in \mathcal{X}_+, \label{eq:slice_cont_std} \\
    \intertext{and the standard-form expression for \eqref{eq:slice_upper_meas} is }
        M^* =& \sup_{\boldsymbol{\mu} \in \mathcal{X}_+} \inp{\mathbf{c}}{\boldsymbol{\mu}} & & \mathbf{b} - \A(\boldsymbol{\mu}) \in \mathcal{Y}_+. \label{eq:slice_meas_std}
\end{align}

\subsection{Strong Duality}

Sufficient conditions for the strong duality of   \eqref{eq:slice_cont_std} and \eqref{eq:slice_meas_std} by Theorem 2.6 of \cite{tacchi2021thesis}) are:
\begin{itemize}
    \item[R1] The feasible measure solutions of $\bbmu \in \mathcal{X}_+: \mathcal{A}(\bbmu) \in \mathcal{Y}_+$ are bounded.
    \item[R2] There exists a bounded feasible $\bbmu$.
    \item[R3] The vectors in \eqref{eq:cost_constraint} are continuous, and all functions used to describe $\A$ is also continuous.
\end{itemize}

Boundedness of measures is proven in Lemma \ref{lem:meas_bounded}, verifying R1. The proof of \ref{thm:slice_upper_meas} outlines a procedure to obtain a feasible $\bbmu$ from any $(\theta, t) \in \Omega$, fulfilling R2. Requirement R3 is also satisfied, given that the vectors in \eqref{eq:cost_constraint} are constant, and the mapping $w(t, \theta, Z, y) \mapsto \Lambda_R w(\theta, t, Z)$ is continuous in $(\theta, t) \in \Omega$ for every $w \in C(\Psi)$.

Strong duality between \eqref{eq:slice_upper} and \eqref{eq:slice_upper_meas} is therefore proven.

\section{Polynomial Approximability}

\label{app:poly_approx}


This appendix proves that $w(\theta, t, Z, y)$ in \eqref{eq:slice_upper_w} can be taken to be polynomial.

\subsection{Preliminary Polynomial Approximation Lemmas}

In order to prove \ac{SOS} convergence of \eqref{eq:slice_upper_sos} with $\lim_{k \rightarrow \infty} p^*_k = P^*$, we require the following lemma ensuring that it is possible to approximate with polynomials:
\begin{lem}
\label{lem:poly_approx}
    For any $\epsilon \geq 0$ and set $L$ respecting A1, there exists a polynomial $w^p \in \R[\theta, t, Z, y]$ such that $w^p > I_{\Psi_L}$ and $P^* + \epsilon \geq \sup_{(\theta, t, Z) \in \Omega_Z} \Lambda_R  w^p(\theta, t, Z) > P^*$.
\end{lem}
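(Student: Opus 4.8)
\textbf{Proof proposal for Lemma \ref{lem:poly_approx}.}

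The plan is to build $w^p$ in three stages: first produce a continuous strict over-approximator of $I_{\Psi_L}$ whose $y$-integral is uniformly close to $P^*$, then replace it by a polynomial via Stone--Weierstrass, and finally absorb the approximation error into strictness. First I would use the fact (already invoked in the proof of Theorem \ref{thm:slice_no_relaxation}) that $C(\Psi)$ is dense in $L^1(\Psi)$ together with the construction in Theorem \ref{thm:slice_no_relaxation}: there is a continuous $\tilde w \in C(\Psi)$ with $\tilde w \geq I_{\Psi_L}$ pointwise and $\sup_{(\theta,t,Z) \in \Omega_Z} \Lambda_R \tilde w(\theta,t,Z) \leq P^* + \epsilon/3$. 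Indeed, since $\Lambda_R I_\Psi = \mathcal{R} I_L$ (as in Lemma \ref{lem:upper_bound}) whose supremum over $\Omega_Z$ is exactly $P^*$, and since the almost-uniform-convergence argument shows the oscillation of $\Lambda_R w^k$ above $\mathcal{R}I_L$ can be made small off a set of arbitrarily small volume while $w^k \geq I_{\Psi_L}$ controls it everywhere, one obtains such a $\tilde w$ with the desired uniform bound on $\Omega_Z$ — here the compactness of $\Omega_Z$ (Lemma \ref{lem:meas_bounded}) lets us pass from "small outside a small set" to a genuine supremum bound by first replacing $\tilde w$ with $\max(\tilde w, 1)$ on a thin neighborhood and controlling the extra mass by $\mathrm{Vol}_{n-1}(B_R^{n-1})$ times the neighborhood volume.

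Next I would apply the Stone--Weierstrass theorem on the compact set $\Psi$ (compact by Lemma \ref{lem:meas_bounded}) to obtain a polynomial $\hat w \in \R[\theta,t,Z,y]$ with $\|\hat w - \tilde w\|_{\infty,\Psi} < \delta$ for a $\delta > 0$ to be fixed. Since $\Lambda_R$ is integration of the $y$-variable over $B_R^{n-1}$, it is a bounded linear map with $\|\Lambda_R(\hat w - \tilde w)\|_{\infty,\Omega_Z} \leq \mathrm{Vol}_{n-1}(B_R^{n-1})\,\delta$, so choosing $\delta \leq \epsilon/(3\,\mathrm{Vol}_{n-1}(B_R^{n-1}))$ gives $\sup_{\Omega_Z}\Lambda_R \hat w \leq P^* + 2\epsilon/3$. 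Finally, set $w^p = \hat w + \delta + c$ where $\delta$ restores the strict pointwise inequality $w^p \geq \tilde w + c > I_{\Psi_L}$ (using $\tilde w \geq I_{\Psi_L}$), and $c > 0$ is a small constant chosen so that $w^p > I_{\Psi_L}$ strictly \emph{and} $\sup_{\Omega_Z}\Lambda_R w^p > P^*$ strictly (the latter is automatic since adding a positive constant raises the integral, and $\mathrm{Vol}_{n-1}(B_R^{n-1}) > 0$); taking $c$ and $\delta$ small enough keeps $\sup_{\Omega_Z}\Lambda_R w^p \leq P^* + \epsilon$. The strict inequality $w^p > I_{\Psi_L}$ on all of $\Psi$ then follows because on $\Psi \setminus \Psi_L$ we need $w^p > 0$, which holds since $\hat w > \tilde w - \delta \geq -\delta$ and hence $w^p = \hat w + \delta + c > c > 0$, and on $\Psi_L$ we need $w^p > 1$, which holds since $\hat w > \tilde w - \delta \geq I_{\Psi_L} + c - \delta = 1 + c - \delta$ provided $\delta < c$, so $w^p > 1 + c + (c-\delta) > 1$.

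The main obstacle I anticipate is the first stage: passing from "$\Lambda_R w^k \to \mathcal{R}I_L$ almost uniformly" (which is all Theorem \ref{thm:slice_no_relaxation} literally gives) to a \emph{uniform} upper bound $\sup_{\Omega_Z} \Lambda_R \tilde w \leq P^* + \epsilon/3$, since almost-uniform convergence permits a thin bad set on which $\Lambda_R w^k$ could in principle exceed $P^*$ by a lot. This is handled by noting that for the specific over-approximating sequence built from mollified indicators, $\Lambda_R w^k(\theta,t,Z) \leq \mathrm{Vol}_{n-1}(B_R^{n-1})$ \emph{everywhere} (a crude but uniform bound), and more carefully that $\Lambda_R w^k$ exceeds $\mathcal{R}I_L$ only on a set shrinking in volume as the mollification width shrinks, with the excess there controlled by the width of the mollifier times the surface area of $\partial L$ — so a single sufficiently fine mollification already gives the uniform bound on the compact $\Omega_Z$ without needing the subsequence extraction at all. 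Once that uniform continuous over-approximator exists, the remaining steps are routine applications of Stone--Weierstrass and bookkeeping with the three small constants.
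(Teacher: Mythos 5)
Your proof follows essentially the same route as the paper's: take a continuous feasible $w$ whose value $\gamma \geq \sup_{\Omega_Z}\Lambda_R w$ is within a fraction of $\epsilon$ of $P^*$ (which is exactly what Theorem \ref{thm:slice_no_relaxation}, $p^*=P^*$, already supplies, so the first-stage difficulty you flag is pre-packaged there), shift up by a small constant for strict feasibility, apply Stone--Weierstrass on the compact support set, and absorb the uniform error into the $\Lambda_R$ bound. Aside from a harmless bookkeeping slip (your $\delta\,\textrm{Vol}_{n-1}(B_R^{n-1})$ budget is spent twice, so the three tolerances need re-splitting) this matches the paper's $\gamma + 3\eta/2\,\textrm{Vol}(B_R^{n-1})$ estimate.
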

\begin{proof}

Let $(w, \gamma)$ be any feasible solution to \eqref{eq:slice_upper_int}-\eqref{eq:slice_upper_psi_comp} such that $\gamma \geq P^*$. Letting $\eta > 0$ be a tolerance, it therefore holds that $(w + \eta)$ is strictly feasible for \eqref{eq:slice_upper_psi}-\eqref{eq:slice_upper_psi_comp}. There exists a polynomial $w^p \in \R[\theta, t, Z, y]$ such that $\sup_{(\theta, t, Z, y) \in \Psi_L} \abs{w^p(\theta, t, Z, y) - (w(\theta, t, Z, y) + \eta) } \leq \eta/2$ in the compact $\Psi_L$ (A1) by the Stone-Weierstrass theorem. As such, the polynomial $w^p$ is strictly feasible for \eqref{eq:slice_upper_psi}-\eqref{eq:slice_upper_psi_comp}.The integrals are therefore related by
\begin{subequations}
\begin{align}
   \Lambda_R w^p(\theta, t, Z)  &\leq \Lambda_R (w(\theta, t, y) + \eta + \eta/2)  \\
    &= \Lambda_R( w(\theta, t, Z)) + 3\eta/2 \textrm{Vol}_{B^{n-1}_R} \\
    &\leq \gamma + 3\eta/2 \textrm{Vol}(B^{n-1}_R).
\end{align}
\end{subequations}
It therefore holds that for each $\epsilon > 0$, an $\eta > 0$ 
 can be chosen such that $\epsilon > 3\eta/2 \textrm{Vol}(B^{n-1}_R)$. This approximation can become arbitrarily tight by letting $\epsilon, \eta \rightarrow 0$.
\end{proof}

\begin{lem}
\label{lem:ball_constraint}
    Under assumption A1 and A2, the sets $\Omega,$ $\Omega_Z$, $\Psi$, and $\Psi_L$ all are \ac{BSA} sets with ball constraints.
\end{lem}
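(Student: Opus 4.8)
The plan is to verify each of the four sets in turn by exhibiting an explicit Archimedean (ball) constraint in their basic-semialgebraic description, so that Putinar's Positivstellensatz (the representation in \eqref{eq:psatz_noeps}) applies and the moment-SOS hierarchy converges. Recall that a set $\K$ ``satisfies a ball constraint'' if $R'^2 - \norm{z}_2^2 \in \Sigma[\K]$ for some radius $R'$; it suffices to show that such a polynomial appears among the describing inequalities, or is a nonnegative combination of them, for the variables actually used.

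First I would handle $\Omega = S^{n-1} \times [-R,R]$ from \eqref{eq:supp_omega}. In the variables $(\theta, t)$, the sphere contributes the two inequalities $1 - \norm{\theta}_2^2 \geq 0$ and $\norm{\theta}_2^2 - 1 \geq 0$ (encoding the equality $\norm{\theta}_2^2 = 1$), and the interval contributes $R^2 - t^2 \geq 0$. Adding $1 - \norm{\theta}_2^2$ (trivially in $\Sigma[\Omega]$) to $R^2 - t^2$ yields $(1 + R^2) - (\norm{\theta}_2^2 + t^2) \in \Sigma[\Omega]$, which is the desired ball constraint with radius $\sqrt{1+R^2}$. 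Next, for $\Omega_Z$ from \eqref{eq:supp_omega_Z} the extra variables are the entries of $Z \in \R^{n \times (n-1)}$, and the condition $[\theta, Z] \in O(n)$ is a system of polynomial equalities that in particular forces each column of $Z$ to be a unit vector; summing those $n-1$ quadratic equalities gives $\norm{Z}_F^2 = n-1$, so $(n-1) - \norm{Z}_F^2$ and its negation are in $\Sigma[\Omega_Z]$. Combining with the $\Omega$-ball constraint produces $(n + R^2) - (\norm{\theta}_2^2 + t^2 + \norm{Z}_F^2) \in \Sigma[\Omega_Z]$. For $\Psi = \Omega_Z \times B_R^{n-1}$ from \eqref{eq:supp_psi_all}, the local coordinate $y$ satisfies the explicit ball constraint $R^2 - \norm{y}_2^2 \geq 0$ in its description, so one simply adds it to the $\Omega_Z$-ball constraint. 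Finally, $\Psi_L = \Psi \cap \{\theta t + Zy \in L\}$ from \eqref{eq:supp_psi}: this is the intersection of $\Psi$ (which already carries a ball constraint) with the pullback of $L$'s describing inequalities, and intersection of BSA sets is by concatenation of constraints, so $\Psi_L$ inherits the ball constraint of $\Psi$; alternatively, A2 guarantees $R^2 - \norm{\theta t + Zy}_2^2 \geq 0$ is among the constraints, which also works directly.

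The main obstacle — really more of a bookkeeping subtlety than a genuine difficulty — is justifying that the \emph{equality} constraints (from $\norm{\theta}_2^2 = 1$ and $[\theta,Z] \in O(n)$) may be used with both signs inside the WSOS cone: this is the standard observation that an equality $h(z) = 0$ can be encoded as $h \geq 0$ and $-h \geq 0$, so $h$ itself (with multiplier $1$ and $-1$) lies in $\Sigma[\K]$, and hence the corresponding norm-of-block term is effectively a constant modulo the ideal. I would state this once and reuse it. A secondary point worth a sentence is that the quotient/Gröbner reductions of Section~\ref{sec:reduce_complex} and the explicit $Z(\theta)$ substitutions of Section~\ref{sec:dim_248} do not disturb the ball constraint, since they only substitute within the defining ideal; but since the lemma is stated for the sets $\Omega, \Omega_Z, \Psi, \Psi_L$ as written in \eqref{eq:supp_sets}, this remark can be deferred. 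The proof is then just the four short computations above, each a couple of lines.
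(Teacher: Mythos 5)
Your proposal is correct and follows essentially the same route as the paper: exhibit explicit Archimedean radii for $\Omega$ by summing $1-\norm{\theta}_2^2$ and $R^2-t^2$, use the orthogonality of $[\theta,\,Z]$ to bound $\norm{Z}_F^2$ for $\Omega_Z$, append $R^2-\norm{y}_2^2$ for $\Psi$, and let $\Psi_L$ inherit the ball constraint from $\Psi$ since its description contains all of $\Psi$'s constraints. Your explicit remark on encoding the quadratic equalities as pairs of inequalities is a small clarification the paper leaves implicit, but the argument is the same.
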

\begin{proof}
    The set $\Omega$ satisfies a ball constraint, because the description of $\Omega$ includes the constraints $1-\norm{\theta}_2^2 \geq 0$ and $R^2 - t^2 \geq 0$. Therefore, $R_1 = \sqrt{R^2 + 1}$ ensures that $R_1^2 - \norm{\theta}_2^2 - t^2 \in \Sigma[\Omega]$.

    We now consider the set $\Omega_Z $, which includes the constraint $[\theta, \, Z] \in O(n)$. This orthogonality constraint implies that $\norm{\theta}_2^2 + \sum_{ij} Z_{ij}^2 = n$. The set $\Psi_L$ is therefore ball-constrained with $R_2 = \sqrt{2 R^2 + n}$ under $R_2^2 - \norm{\theta}_2^2 - t^2 - \norm{y}_2^2 - \sum_{ij} Z_{ij}^2 \in \Sigma[\Psi_L]$.    The set $\Psi$ is ball-constrained with $R_3 = \sqrt{2R_2^2 + 1}$ and $R_3^2 - \norm{\theta}_2^2 - t^2 - \norm{y}_2^2 - \sum_{ij} Z_{ij}^2\in \Sigma[\Omega \times B_R^{n-1}]$. Given that $\Psi_L \subseteq \Psi$ (and the \ac{BSA} representation of $\Psi_L$ contains all constraints of $\Psi$), it holds that $\Psi_L$ is also ball-constrained.

\end{proof}

\subsection{Proof of Theorem \ref{thm:sos_indicator_slice}}
Lemma \ref{lem:poly_approx} ensures that constraints \eqref{eq:slice_upper_int}-\eqref{eq:slice_upper_psi_comp} can be fulfilled strictly by polynomials. 
    The Putinar Positivestellensatz \cite{putinar1993compact} implies that every positive polynomial over a ball-constrained set is also \ac{WSOS}.  Ball-constraints for the sets of interest are verified by Lemma \ref{lem:ball_constraint}. As such, increasing the degree $k$ sufficiently high will ensure that a feasible polynomial  $w_k$ will be found for each $\epsilon > 0$. Letting $\epsilon \rightarrow 0$ in the limit proves the validity of this theorem.
\section{Strong Duality of Stokes Slice-Volume}

\label{app:duality_stokes}

This appendix proves strong duality between \eqref{eq:slice_upper_stokes} and \eqref{eq:slice_upper_meas_stokes}. The structure of the weak duality portion will follow the format of Appendix \ref{app:duality}.


\subsection{Weak Duality}
\label{app:duality_stokes_weak}
The optimization variables of \eqref{eq:slice_upper_stokes} and \eqref{eq:slice_upper_meas_stokes} are
\begin{align}
  \bell &= (w, \gamma, u) \\
  \bbmu &= (\mu_0, \mu, \hat{\mu}, \{\nu_i\}_{i=1}^{N_c}). \label{eq:bell_stokes}
\end{align}
The space containing $\bell$ is
\begin{align}
    \mathcal{Y}' &= C(\Psi) \times \R \times (C^1(\Psi_L))^n\\
    \mathcal{Y} &= \mathcal{M}(\Psi)' \times \R \times ((C^1(\Psi_L)')^n.
\end{align}
The sets defining $\bbmu$ are
\begin{align}
    \mathcal{X}' &=  C(\Omega_Z ) \times C(\Psi_L) \times  C(\Psi) \times \prod_{i=1}^{N_c} C(\Psi_L^i) \label{eq:dual_spaces_stokes}\\
    \mathcal{X} &= \mathcal{M}(\Omega_Z) \times \mathcal{M}(\Psi_L) \times \mathcal{M}(\Psi) \times \prod_{i=1}^{N_c} \mathcal{M}(\Psi_L^i),\nonumber
\end{align}
and their nonnegative subcones of
\begin{align}
    \mathcal{X}'_+ &=  C_+(\Omega_Z) \times C_+(\Psi_L) \times  C_+(\Psi) \times  \prod_{i=1}^{N_c} C_+(\Psi_L^i) \label{eq:dual_cones_stokes}\\
    \mathcal{X}_+ &= \mathcal{M}_+(\Omega_Z) \times \mathcal{M}_+(\Psi_L) \times \mathcal{M}_+(\Psi)  \prod_{i=1}^{N_c} \mathcal{M}_+(\Psi_L^i).\nonumber
\end{align}

The convention of $\mathcal{Y}'_+ = \mathcal{Y}'$ and $\mathcal{Y}_+ = \mathcal{Y}$ is maintained, resulting in feasible set expressions of $\bell \in \mathcal{Y}'$ and $\bbmu \in \mathcal{X}_+$ for \eqref{eq:slice_upper_stokes} and \eqref{eq:slice_upper_meas_stokes}.

We define an affine map $\A$ and its adjoint $\A'$ from \eqref{eq:slice_upper_meas_stokes} with
\begin{align}
    \mathcal{A}(\bbmu) &= [\mu_0 \otimes \lambda_R^{n-1} - \mu - \hat{\mu}, \ \inp{1}{\mu_0}, \ [\text{grad}_y]_G \mu - \textstyle \sum_{i=1}^L[\text{grad}_y (\theta t +  Zy)_\#g_i]_G \nu_i] \\
    \mathcal{A}^*(\bell) &= [\gamma - \textstyle \Lambda_R w(\theta, t, Z), \ w - [\nabla_y \cdot u]_G, w, \ -[u \cdot \nabla_y g_i]_G],\nonumber
\end{align}
and also set the cost and constraint vectors of \eqref{eq:slice_upper_meas_stokes} as
\begin{subequations}
\label{eq:cost_constraint_stokes}
    \begin{align}
        \mathbf{b} &= [0, \ 1, \ 0, \0_n] \\
        \mathbf{c} &= [0, \  0, \ 1, \0_{N_c}],
    \end{align}
\end{subequations}
to form the following pairings:
\begin{subequations}
\begin{align}
    \inp{\boldsymbol{\ell}}{\mathbf{b}} &= \gamma \\
    \inp{\mathbf{c}}{\boldsymbol{\mu}} &= \inp{1}{\mu}.
\end{align}
\end{subequations}
Under these definitions, the Stokes measure program in \eqref{eq:slice_upper_meas_stokes} is therefore  
\begin{align}    
        m^*_s =& \sup_{\boldsymbol{\mu} \in \mathcal{X}_+} \inp{\mathbf{c}}{\boldsymbol{\mu}} & & \mathbf{b} - \A(\boldsymbol{\mu}) \in \mathcal{Y}_+. \label{eq:slice_meas_std_stokes} \\
        \intertext{and the function program \eqref{eq:slice_upper_stokes} is}
        p^*_s = &\inf_{\boldsymbol{\ell} \in \mathcal{Y}'_+} \inp{\boldsymbol{\ell}}{\mathbf{b}}
    & &\A'(\boldsymbol{\ell}) - \mathbf{c} \in \mathcal{X}_+. \label{eq:slice_cont_std_stokes}
\end{align}
Weak duality implies that $p^*_s \geq m^*_s$.

\subsection{Strong Duality}

We assemble a set of relations between optimal values under A1-A4:
\begin{table}[h]
    \centering
    \caption{\label{tab:stokes} \ac{LP} optimality relations to prove Stokes strong duality}
    \begin{tabular}{c |l}
        $m^*_s = m^*$ &  Theorem \ref{thm:stokes_meas_same}\\
        $m^* = p^*$ &  Theorem \ref{thm:strong_duality_indicator} \\
        $p^* \geq p^*_s $ & Proposition \ref{prop:stokes_valid} \\
        $p^*_s \geq m^*_s$ & Weak duality (Stokes)
    \end{tabular}
\end{table}

\noindent Table \ref{tab:stokes} results in the sandwich of $p^* \leq p^*_s \leq m^*_s$. Given that $m^*_s = m^* = p^*$, it therefore holds that the strong duality relation of $m^*_s = p^*_s$ is proven.




\end{document}